\newcommand{\R}{\mathbb{R}}
\newcommand{\N}{\mathbb{N}}
\newcommand{\Z}{\mathbb{Z}}
\newcommand{\I}{\mathcal{I}}
\newcommand{\T}{\mathbb{T}}
\newcommand{\osc}{\mathrm{osc}}
\newcommand{\1}{\mathbf{1}}
\theoremstyle{plain}
\newtheorem{defi}{Definition}[section]
\newtheorem{prop}[defi]{Proposition}
\newtheorem{teo}[defi]{Theorem}
\newtheorem{lema}[defi]{Lemma}
\newtheorem{remark}[defi]{Remark}
\theoremstyle{definition}
\theoremstyle{remark}
\numberwithin{equation}{section}
\begin{document}

\title[]{Cauchy problem and periodic homogenization {for} %of
 nonlocal Hamilton-Jacobi equations with coercive gradient terms}

%\author[]{Martino Bardi}
%\address{
%Martino Bardi:
%Dipartimento di Matematica Tullio Levi Civita, Universit\`a di Padova,
%Via Trieste 63, Padova, Italy.
%\newline {\tt bardi@math.unipd.it}
%}
\author[]{Martino Bardi}
\address{
Martino Bardi:
Dipartimento di Matematica Tullio Levi Civita, Universit\`a di Padova,
Via Trieste 63, Padova, Italy.
\newline {\tt bardi@math.unipd.it}
}

\author[]{Annalisa Cesaroni}
\address{
Annalisa Cesaroni:
Dipartimento di Scienze Statistiche, Universit\`a di Padova,
Via Cesare Battisti 141, Padova, Italy.
\newline {\tt annalisa.cesaroni@unipd.it}
}

\author[]{Erwin Topp}
\address{
Erwin Topp:
Departamento de Matem\'atica y C.C., Universidad de Santiago de Chile,
Casilla 307, Santiago, Chile.
\newline {\tt erwin.topp@usach.cl}
}

\date{\today}

 \begin{abstract}
%In this paper we prove %provide
 This paper deals with the periodic homogenization  of nonlocal parabolic Hamilton-Jacobi equations  with superlinear growth in the gradient terms.
We show that the problem presents  different features depending on the order of the nonlocal operator,
 giving rise to three different  cell problems  {and effective operators.  
 %The results are shown depending on the order $\sigma \in (0,2)$ of the nonlocal operator.
To prove the locally uniform convergence to the unique solution of the Cauchy problem for the effective equation we need a new comparison principle among viscosity semi-solutions of integrodifferential equations that can be of independent interest.}
 \end{abstract}

\keywords{Homogenization, Hamilton-Jacobi equations, integro-differential equations, fractional Laplacians, {comparison principle}, viscosity solutions. 
{\em MR Subject Classification}:  35R09, 35B27, 35F71, 35D40}
\maketitle

\section{Introduction.}

%In this note we are interested in
This paper deals with periodic homogenization for nonlocal parabolic Hamilton-Jacobi equations of %with
 the form
\begin{equation}
\label{eq}
u_t^\epsilon -a\left(x, \frac{x}{\epsilon}\right) \I(u^\epsilon, x) + H\left(x, \frac{x}{\epsilon}, Du^\epsilon\right) = 0 \quad \mbox{in} \ Q_T,
\end{equation}
where %we have denoted
 $Q_T := \R^N \times (0,T]$, for $T > 0$ fixed. We complement this equation with the initial condition
\begin{equation}
\label{initial}
u^\epsilon(x,0) = u_0(x) \quad x \in \R^N,
\end{equation}
where $u_0$ is a bounded and uniformly continuous function in $\R^N$.
%In \eqref{eq}, t
The elliptic part %is cast by 
of the operator in \eqref{eq} is the term $a(x,y)\I(u,x)$, where $a:\R^N\times\R^N\to \R$ is a {uniformly }%measurable nonnegative
continuous function and $\I(u,x)$ is a nonlocal operator defined  %, where  
%and the term $\I$ is defined  as %, $\sigma \in (0,2)$ and , the nonlocal operator $\I = \I(\sigma)$ is defined as
as
\begin{equation}\label{operator}
\I(u,x) := \int \limits_{\R^N} [u(x + z) - u(x) - \1_B(z) \langle Du(x), z \rangle]K^\sigma(z)dz, 
\end{equation}
%where
for  suitable functions $u: \R^N \to \R$, with $K^\sigma: \R^N \to \R$  nonnegative and measurable and $\1_B$ the indicator function of the unit ball $B$ centered at $0$. The main assumption on this nonlocal operator is the following {\it ellipticity condition}
\begin{equation}\label{aK}\tag{{\bf E}}
%\left \{
%\begin{array}{l}
\begin{split}
& 0 < a_0 \leq a(x, y) \leq a_0^{-1} \quad \mbox{for all} \ x, y \in \R^N, \quad \mbox{and there exists $\sigma\in (0,2)$ such that} \\
&   \bar k(z) := K^\sigma(z)|z|^{N + \sigma} \ \mbox{is bounded in} \ \R^N, \ \mbox{continuous at the origin, and} \ \bar k(0) > 0. %\ \mbox{with} \\ \kappa(0) = 1, \kappa \in C(B_r) \quad \mbox{for some} \ r > 0.
\end{split}
%\end{array}
%\right .
\end{equation}
This assumption makes $\I(u,x)$ in~\eqref{operator}  well-defined for bounded and sufficiently smooth functions $u$. The parameter $\sigma$ shall be regarded as the \textsl{order} of the operator. 

An example of particular interest is the case of the \textsl{fractional Laplacian of order $\sigma \in (0,2)$} 
%
%It is worth to mention that the formulation of the nonlocal operator includes the case in which  $-\I = (-\Delta)^{\sigma/2}$, that is when the nonlocal operator is the fractional Laplacian of order $\sigma$ 
defined as
\begin{equation}\label{fractionallap}
(-\Delta)^{\sigma/2} u(x) := -C_{N, \sigma} \ \int_{\R^N} [u(x + z)-u(x) - \mathbf{1}_B(z) \langle Du(x), z \rangle ]|z|^{-(N + \sigma)}dz,
\end{equation}
where $C_{N, \sigma} > 0$ is a suitable normalizing constant,  see~\cite{Hitch}. 

{%\bb
 We will assume $\bar k(0)=  C_{N, \sigma}$,  see assumption \eqref{ko}, so the interaction kernel  $K^\sigma$ in~\eqref{operator}    under assumption \eqref{aK} coincides with the kernel of the fractional laplacian   $(-\Delta)^{\sigma/2}$ multiplied by the function $ \frac{\bar k(z)}{\bar k(0)}$ which is bounded, continuous in $0$ and takes  value $1$ in $0$. 
%The parameter  $\sigma\in (0,2) $ appearing in \eqref{aK} is the order of the integro-differential operator: roughly speaking, the order $\sigma$ of $\I$ corresponds to scaling properties of the kernel 
So, $K^\sigma$ can be considered  a  perturbation of the  kernel of the fractional Laplacian $(-\Delta)^{\sigma/2}$, and therefore the integro-differential operator $\I$ is a perturbation of  $(-\Delta)^{\sigma/2}$.%, of the same order.} 

Concerning the Hamiltonian, we concentrate here on %in
 the case where %the
 $H$ is superlinear %(and "superfractional")
  in the gradient variable, see assumption \eqref{H1}. 
  {   A model problem is
  \begin{equation*}
%\label{eq}
u_t^\epsilon -a\left(x, \frac{x}{\epsilon}\right) \I(u^\epsilon, x) + b\left(x, \frac{x}{\epsilon}\right) |Du^\epsilon|^m = f\left(x, \frac{x}{\epsilon}\right)
% \quad \mbox{in} \ Q_T,
\end{equation*}
  with $m>1$ and $b(x,y)\geq b_0>0$, but we do not need any convexity of $H$ with respect to $Du$.}
  This is a suitable framework because %since
  we can exploit available well-posedness and regularity results,   especially {  by Barles, Koike, Ley, and Topp \cite{BKLT},} to study the behavior of the family of viscosity solutions $\{ u^\epsilon \}_\epsilon$ to~\eqref{eq}-\eqref{initial} as $\epsilon \to 0$.    %Actually, such assumption can be weakened in some cases, but for the sake of simplicity we prefer  %do 
 %not to pursue this direction here. %, see Remark \ref{rmk1}.

 %\medskip
 \smallskip

Our main purpose is to obtain homogenization results for problems of %with
 the form~\eqref{eq} under  periodicity conditions on the %so-called
  ``fast variable" $\frac{x}{\epsilon}$,  in the spirit of the celebrated paper of Lions, Papanicolaou \& Varadhan~\cite{LPV} and subsequently addressed for first and second-order degenerate elliptic and parabolic equations in~\cite{Evans1, Evans2, AB1, AB2, ABM}, among many others.
{The goal is finding an effective Hamiltonian $\bar H : \R^N\times\R^N\times \R \to \R$ such that $u^\epsilon$ converges to a solution of 
\begin{equation}
\label{eqeffective}
u_t + \bar H(x, D u, \I(u,x)) = 0 \quad \mbox{in} \ Q_T, 
 \end{equation} 
possibly the unique one satisfying the initial condition
\begin{equation}
\label{initeff}
u(x,0) = u_0(x) \quad x \in \R^N .
\end{equation}
% \eqref{initial}. 
%}
% It is %a
%  well-known % fact
%   in homogenization theory %results
%  that uniqueness of %the 
%  a suitable effective problem provides a comfortable framework to obtain the % full 
%  convergence of the full sequence $\{ u^\epsilon \}_\epsilon$. In the nonlocal setting, the lack of a rather general uniqueness result for nonlocal equations in which the nonlocal operator depends and/or interacts with the state variable poses new difficulties in the study of the convergence of $\{ u^\epsilon \}_\epsilon$.  %
The basic strategy to identify $\bar H$ %these limit problems is to consider
begins with %the search of an appropriate limit equation via }considering 
 a formal expansion in powers of $\epsilon$ of %with
 the form
  %of the solution $u^\epsilon$ to~\eqref{eqF} as
%$
%u^\epsilon(x,t) = \bar u(x,t) + \epsilon^\beta \psi(x/\epsilon)
%$
%for some %appropriate
 %$\beta > 0$ and some function
%The scaling of %the problem
 %\eqref{eq} suggests the %formal
 % expansion %of $u^\epsilon$ %, as it can be seen in~\cite{A1, A2} the problem naturally suggest an expansion with of the form 
\begin{equation}
\label{uepsexpansionintro}
u^\epsilon(x,t) = \bar u(x,t) + \epsilon^{1 \vee \sigma} \psi(x/\epsilon),
\end{equation}
where $a \vee b = \max \{ a,b\}$ and  $\psi$ %usually known as
is called \textsl{the corrector}. Note that the exponent of $\epsilon$ is chosen depending on the order $\sigma$ of the integral operator $\I$.
%Under our  assumptions of the nonlocal operator $\I$ in~\eqref{operator}, this %expansion
% leads to the following evaluation of the nonlocal operator 
% \begin{equation*}
%\I(u^\epsilon, x) = \I(\bar u, x) + \epsilon^{1 \vee \sigma - \sigma} \left((-\Delta)^{\sigma/2} \psi(x/\epsilon) {   + o_\epsilon(1)}\right). %+ o_\epsilon(1)
%\end{equation*}
%for $o_\epsilon(1) \to 0$ as $\epsilon \to 0$. 
%from which  the nature of the cell problem solved by the corrector $\psi$ depends on $\sigma \in (0,2)$. 
Plugging the ansatz \eqref{uepsexpansionintro} %formal expansion 
in the equation \eqref{eq}, some nontrivial calculations in Section \ref{secexpansion} lead to a   \textsl{cell problem}, which is an an additive eigenvalue problem % stationary equation %settled
 on the torus    $\T^N$ %involving
  whose solution should be the corrector % function
  $\psi$ and %an additive
  the eigenvalue $\bar H=\bar H(x,p,l)$, where $x, p, l$ are parameters. % that identifies the {effective Hamiltonian}. 
%  } %can be  characterized as the solution of %the so-called
%a suitable  \textsl{cell problem}, which is a stationary equation %settled
% on the torus {   $\T^N$ involving an additive eigenvalue $\bar H$ that identifies the {\it effective Hamiltonian}.}  
%This
The presence of the nonlocal term $\I$ produces three different cell problems depending on $\sigma$:
%}
\begin{itemize}
%We show that the cell problem %arises to be
\item for $\sigma < 1$ the cell problem is the purely first-order PDE %problem if  (equation~\eqref{cell<1}), 
\begin{equation*}
%\label{cell<1} 
-a(x,y) l  + H(x, y, p + D\psi(y))=\bar H \qquad y\in\T^N.
\end{equation*} 

\item for $\sigma > 1$ the cell problem is the %purely nonlocal and
 linear purely nonlocal %integral
  equation %problem if  (equation~\eqref{cell>1}), 
\begin{equation*}
%\label{cell>1}
 -a(x,y) l  + a(x,y)  (-\Delta)^{\sigma/2} \psi(y) +H(x, y, p )=\bar H \qquad y\in\T^N.
\end{equation*} 

\item for $\sigma = 1$ it has both first-order and nonlocal terms, and an  extra drift term $\langle b,  D\psi(y) \rangle$
% appearing in the equation 
  \begin{equation}
\label{cell=1intro}
-a(x,y) l  + a(x,y) [ (-\Delta)^{1/2} \psi(y) + %a(x,y) 
\langle b,  D\psi(y) \rangle ] + H(x, y, p+D\psi(y) )=\bar H  \quad \text{in }  \T^N,
\end{equation} 
with $b\ne 0$ if the kernel $K^1$ is not symmetric %(the vector 
($b$ %\in\R^N$ 
  is explicitly  defined in \eqref{J1}). %problem
%for $y\in\T^N$,
% where $x, l,$ and $p$ are parameters and the constant $\bar H=\bar H(x,p,l)$ is part of the solution to be found. 
 \end{itemize}
% At this point we notice the extra drift term $\langle b,  D\psi(y) \rangle$, appearing in the equation when $\sigma=1$ and 
%  the kernel $K^1$ is not symmetric (the vector $b$ %\in\R^N$ 
%  is explicitly defined in \eqref{J1}). % and it  appears only%is identically $0$ if $K^1$ is symmetric)
 % (equation~\eqref{cell=1}). In particular, for the case $\sigma=1$, if the kernel $K^1$ is not symmetric, we show that in the cell problem a drift term %is
%  appears%ing
%  , related to the nonlocal operator. This %feature
The solvability of these problems and sufficient regularity of $\psi$ are not difficult in the first two cases, whereas for $\sigma = 1$ they require some fine estimates that we obtain by adapting the methods of \cite{BKLT}, \cite{BLT}, and \cite{Silvestre}, and by strengthening the regularity assumption  on $H$ from the general condition \eqref{H20}  to  \eqref{A}.
We deduce from the cell problems also informations about the effective Hamiltonian $\bar H$,  especially about its modulus of continuity, since $\bar H$ is explicit only for $\sigma > 1$. 
}
%  The analysis of these three equations provides important information about the %associated
%  effective Hamiltonian $\bar H$, which in the most interesting scenarios {   %truly
%  depends on all the three variables, the state  $x$, the gradient $p$, and the nonlocal term $l$.} %ity.

Adapting in an appropriate way the %standard 
perturbed test function method introduced  by Evans %(see 
\cite{Evans1, Evans2}, we show %the    convergence 
that the weak semilimits of the family of solutions $\{ u^\epsilon \}_\epsilon$ %to the solution 
are a sub- and a supersolution of} the effective equation \eqref{eqeffective} and initial condition \eqref{initeff}.
%problem
%\begin{equation}
%\label{eqeffective}
%u_t + \bar H(x, D u, \I(u,x)) = 0 \quad \mbox{in} \ Q_T, 
% \end{equation} 
%with initial condition \eqref{initial}.
%It is %a
%  well-known % fact
%   in homogenization theory %results
%  that uniqueness of %the 
%  a suitable effective problem provides a comfortable framework 
   Next we {need} %look for 
   a comparison principle between a sub- and a supersolution of this Cauchy problem to obtain the % full 
locally uniform   convergence of the full sequence $\{ u^\epsilon \}_\epsilon$. 
In the nonlocal setting, {however, the known theory does not cover nonlinearities where the state variable $x$ and the integral operator $\I$ interact. Only the case $\sigma>1$, where the effective equation is
\[
u_t -  \frac{ \I(u,x)}{\int_{\T^N} 1/a(x,y)dy} + \int_{\T^N} %a^{-1}(x,y)
{ \frac {H(x,y,Du)}{a(x,y)} }dy = 0 .
\]
can be treated by the methods of Barles and Imbert~\cite{BI}. 
For the other two cases we prove a new comparison result for \eqref{eqeffective}-\eqref{initeff} under the structure condition on the operator that for some $n>0$
\begin{equation}\label{ass-barH}
\begin{split}
& |\bar H(x_1,p_1,l_1) - \bar H(x_2, p_2, l_2)| \\
\leq & \ \omega\Big{(} |l_1 - l_2| + |x_1 - x_2|(1 +| l| + |p|^m)^n + |p_1 - p_2|(1 + |l| + |p|^m)^n \Big{)}, 
\end{split} 
\end{equation}
and for semicontinuous sub- and supersolutions attaining the initial data continuously uniformly on $\R^N$, i.e.,
\begin{equation}\label{limt0}
\lim_{t\to 0^+} \sup_{x\in\R^N} |u(x,t) - u_0(x)| = 0 ,
\end{equation}
and such that at least one of them is H\"older continuous. The proof relies on a new argument for comparison when one knows that the semisolutions are ordered in a small strip $\R^N\times [0, d_0]$ and one of them is H\"older, Proposition \ref{lematech}. Then one reduces to this case by regularising in time, and exploiting the regularity results of \cite{BKLT} and the initial condition \eqref{limt0}, see Theorem \ref{teoremac}. We believe this comparison theorem and the method of proof have independent interest and will find other applications.

Finally, we show that $\bar H$ satisfies \eqref{ass-barH} with $n=m-1$ and the weak semilimits verify the assumptions of the comparison theorem, and therefore we get the homogenization result for all $\sigma$, as well as a characterization of the limit as the unique solution of \eqref{eqeffective} with the property \eqref{limt0}.
% Because of this, in the case $\sigma \neq 1$ we could  weaken the assumptions in some particular directions. %For instance,
%  Although we do not pursue such generalizations here, we mention that } the gradient dominance in the case $\sigma < 1$ makes it possible to consider %integrable
% kernels $K^\sigma$ { that are integrable} and with a direct dependence on $x$, %meanwhile 
%whereas the stronger ellipticity nature of the case $\sigma > 1$ permits to consider also  non coercive Hamiltonians $H$.  
%{  These generalizations will be given in a future paper.}

\smallskip
There are a few other papers on the homogenization of integrodifferential equations in the framework of viscosity solutions. Arisawa \cite{A1, A2}
addressed stationary %linear
 equations of the form
$ u^\epsilon -a\left(\frac{x}{\epsilon}\right) \I(u^\epsilon, x) = g\left(\frac{x}{\epsilon} \right)$
  in a bounded open set $\Omega$, with $ u^\epsilon$ prescribed in $\Omega^c$. In this problem there is no interaction between $\I$ and gradient terms in $H$, and the effective equation does not depend on $x$, so it satisfies the comparison principle by standard theory. In the unpublished paper \cite{A3} she considered the same equation with the addition of a non-oscillating Hamiltonian
   $H=\max_{\alpha\in A}\langle f(x,\alpha), Du^\epsilon \rangle$,
    with $A$ compact, and mere almost periodicity of $a$ and $g$. 
In  \cite{S}   %, where }%, the author 
 Schwab  also %deals with some
 considered a Dirichlet problem  and nonlocal equations without first order terms, which in his case are %fully nonlinear
  elliptic and have the Bellman-Isaacs form 
  with oscillating kernels $K^\sigma(\frac x\epsilon,z)$. %$\inf_\alpha \sup_\beta\{g_{\alpha \beta} - \
 In \cite{S} the effective equation %problem
  has nontrivial interaction between the state variable and the nonlocality, %. In this case, though, the effective equation
 but it  enjoys translation invariance properties which allow to get a comparison principle % the effective problem
  by inf/sup convolutive regularizations. 
   }
   %The author obtains a characterization of the effective problem as a true elliptic nonlocal problem which is very interesting, but that we do not pursue in this work.     S were %also 
Schwab also  extended some of these results to stochastic homogenization \cite{S2}.
We mention that nonlocal homogenization problems have %also
 been addressed also  in other contexts, such as %in weak formulations 
{divergence-form equations, using $\Gamma$-convergence~\cite{FBRS}, and semigroup theory \cite{PZ}.}
{Finally, we point out that a phenomenon related to the appearance of the extra term in \eqref{cell=1intro} when the kernel is not symmetric was %already pointed out
observed in \cite{ChD}.}

%%
%We remark {   two further differences with the existing literature. %mention
%In} all the previous papers on periodic homogenization of nonlocal equations, only  stationary problems have been considered, whereas % in our case
%  we treat the evolutive case.  %that is we consider homogenization results for 
%%i.e., nonlocal parabolic problems. 
%Moreover, %we also include gradient terms in the
%our homogenization problem involves gradient terms %playing
%which play a crucial role in %on
% the analysis of the associated limit problems. % depending on the order $\sigma$ of the nonlocal operator

%\medskip
\smallskip
%In this article we follow the same strategy loosely described above for local problems, and that have been used in previous articles dealing with nonlocal operators, see for instance Arisawa~\cite{A1,A2}, Schwab~\cite{S} and references therein. 

%{In the case $\sigma > 1$...}
%we recall that no general comparison principle for elliptic nonlocal problems with integro-differential terms depending on the state variable is available in the literature, despite several partial contributions in this direction depending on the form of the nonlocal operator as well as regularity estimates for the solutions can be found in the literature, see~\cite{BI}.
%In this case, the formal expansion~\eqref{uepsexpansionintro} leads to a cell problem which is purely nonlocal and therefore uniform ellipticity condition~\eqref{aK}
% is also of central importance. 

The % present
 paper is organized as follows. In Section \ref{sectionassumptions} we present the main assumption and preliminary results. In section~\ref{seccomparison} we provide the new comparison principle that is needed in the case $\sigma \leq 1$. In section~\ref{secexpansion} we present the different cell problems associated to the value of $\sigma \in (0,2)$.
Sections \ref{section1},  \ref{sectionminore}, and  \ref{sectionmaggiore}  deal, respectively, with the case $\sigma=1$, $\sigma<1$,  and $\sigma>1$. 
Finally, in the Appendix   we provide two %a result on  Lipschitz a priori bounds
a priori estimates for solution to coercive Hamilton-Jacobi equation with fractional Laplacian of order $1/2$.

\section{Preliminaries} \label{sectionassumptions} 

\subsection{Basic assumptions and examples.} 

First of all we assume that {$a : \R^{2N} \to \R$ %\in C(\R^{2N})$ 
is uniformly continuous} and $H \in C(\R^{3N})$ satisfies
\begin{equation}\label{H0}\tag{{\bf H0}}
\begin{split}
& %\
| H(\cdot,\cdot,0)%\
|_\infty, %\
| a %\
|_\infty < +\infty, \\
& a(x, \cdot), \ H(x, \cdot, p) \ \mbox{are $\Z^N$-periodic, for all} \ x, p \in \R^N.
\end{split}
\end{equation}

%We present in this section the main assumptions on the operators we are considering. 
%Actually as we will see, we will need some stronger assumption in the critical case $\sigma=1$.
The assumption on the nonlocal operator are given in \eqref{aK}. 
We define $\bar \omega$  to be the modulus of continuity of $\bar k$ at $0$, that is \begin{equation}\label{kernel}
\bar \omega(t) = \sup_{|z|\leq t} \{ |\bar k(z) - \bar k(0)| \}, \quad t > 0.
\end{equation} 
Moreover, in the case $\sigma=1$, we impose the following extra condition  on $K^1$, when it is not symmetric:
\begin{equation}\label{extracond1}
\int_{0}^{1} \bar \omega(r) r^{-1}dr < +\infty.
\end{equation} 

Regarding~\eqref{aK}, the second assumption is related to what we call ``the order" of the nonlocal operator, i.e., the number $\sigma\in (0,2)$. On the other hand, the first assumption is important to get the existence and uniqueness to~\eqref{eq}. %It is the simplest case in which the nonlocality depends on the state variable (and also in the fast variable) and well-posedness holds. 
For simplicity, we assume that 
\begin{equation}\label{ko} 
\bar k(0) = C_{N, \sigma} > 0,
\end{equation}
 where  $C_{N, \sigma} > 0$ is the well-known normalizing constant arising in the definition of fractional Laplacian  $(-\Delta)^{\sigma/2}$ (see~\cite{Hitch}). This is going to be used in subsection~\ref{secexpansion}.%\begin{equation*}
%(-\Delta)^{\sigma/2} u(x) = -C_{N, \sigma} \ \int_{\R^N} [u(x + z)-u(x) - \mathbf{1}_B(z) \langle Du(x), z \rangle ]|z|^{-(N + \sigma)}dz,
%\end{equation*}

\smallskip
%is structural, and is going to be assumed throughout this article.

{   
We assume  that the Hamiltonian is superlinear %(and superfractional)
 in the gradient variable %, that is %, given $\sigma$ in \eqref{aK}, we assume 
 %there exist constants  $b_0, C_0 > 0$, and ${  m>1 \vee \sigma}$   such that for each $\mu \in (0,1)$ we have
in the following sense: 
\begin{equation}
\label{H1}\tag{{\bf H1}}
\exists \, b_0, C_0 > 0, \;  m>1 \;:\;
\mu H(x, y, \mu^{-1}p)-H(x,y, p) \geq (1 - \mu) \Big{(} b_0 |p|^m - C_0 \Big{)}, \quad \forall \mu \in (0,1),
\end{equation}
for all $x,y,p \in \R^N$. 
Moreover, we  assume %that
 there exists a modulus of continuity $\omega$ such that
%there exists $L_H > 0$  such that 
\begin{equation}\label{H20}
\tag{{\bf H2}}
|H(x, y, p) -H(x', y', p')| \leq  \omega(|x - x'| + |y - y'|)(1 + R^m)  + \omega(|p - p'|)(1 + R^{m - 1}),
\end{equation}
for all $R > 0$, all $x, x', y, y'  \in \R^N$ and $p, p' \in \R^N$ with $|p|,|p'| \leq R$. 
Since it is not restrictive to assume $\omega(r)\leq C_1r$ for all $r\geq 1$, \eqref{H20} and \eqref{H0} imply the existence of $C > 0$ such that
\begin{equation}
\label{grow}
|H(x,y,p)| \leq C(1 +  |p|^m), \quad \mbox{for } \ x,y,p \in \R^{N}.
\end{equation}
%{TO BE DEFINED DIFFERENTLY. accordinf to $\sigma$. E.T. : How?}
We observe %finally
 that  assumptions {  \eqref{H0},} \eqref{H1}, and  \eqref{H20}   imply the  the  following coercivity condition: for some $C > 1$ and $K\geq 0$ 
\begin{equation}
\label{strongcond}
C^{-1}(1 +  |p|^m) -K\leq  H(x,y, p) % \leq C(1 +  |p|^m)
, \quad \mbox{for } \ x,y,p \in \R^{N}.
\end{equation}  
A proof of this fact is detailed {at the end of the Appendix, Section \ref{appendix}}.
 A model example is %the following: 
 \begin{equation*}
 %\label{Hintro}
H(x,y,p) = b(x,y) |p%Du
|^m - f(x,y),
\end{equation*}
with $m > 1$ and $f, b$ bounded and  uniformly continuous, with $b \geq b_0 > 0$.
%[ ERWIN: YOU SAY THIS IS IMPLIED by (H1), but I don't see it. Can you show us the proof or give a reference where it is proved? ]
}

Finally, in the case $\sigma=1$,  we require the following extra Lipschitz condition over the data: 
recalling $m > 1$ arising in~\eqref{H1}, we assume the existence of $L > 0$ such that, for all $R > 0$, all $X = (x, y), X' = (x', y') 
 \in \R^{2N}$ and $p, p' \in \R^N$ with $|p|,|p'| \leq R$ we have 
\begin{equation}\label{A}
\left \{ \begin{array}{l}  |H(X, p) -H(X', p')| \leq L (1 + R^m) |X - X'| + L (1 + R^{m - 1}) |p - p'|,\\\\
|a(X) - a(X')| \leq L |X - X'|. \end{array} \right .
\end{equation}

%Notice that the above Lipschitz condition on $H$ together with~\eqref{H0} imply the existence of $C > 0$ such that
%\begin{equation*}
%H(x,y,p) \leq C(1 +  |p|^m), \quad \mbox{for } \ x,y,p \in \R^{N},
%\end{equation*}
%and then, by~\eqref{H1} we conclude the existence of a constant $C > 1$ such that
%\begin{equation}
%\label{strongcond}
%C^{-1}(1 +  |p|^m) \leq  H(x,y, p) \leq C(1 +  |p|^m), \quad \mbox{for } \ x,y,p \in \R^{N}.
%\end{equation}
%for all $x,y,p$.

%\begin{remark}\label{rmk1} \upshape {   [ THIS REMARK iS NOT CLEAR, I WOULD DROP IT ]}
%We concentrate in the superlinear/superfractional case \eqref{H1} for simplicity. In fact, for example, using comparison results in \cite{BI} and regularity estimates in \cite{B-Ch-C-I-Lip} we can conclude homogenization results for Hamilton-Jacobi-Bellman equations which are not necessarily coercive in the gradient. This requires further assumptions on the regularity of the data and/or on the gradient growth in terms of this regularity. 
%\end{remark}
%%%%%%%%%%%%%%%%%%%%%%%%%
%%%%%%%%%%%%%%%%%%%%%%%%%
   We recall briefly the definition of viscosity solutions for  nonlocal parabolic equations such as \eqref{eq}.  For more details we refer to \cite{BI}. 

\subsection{Notion of Solution.}

We describe the notion of solution for slightly more general Cauchy problems of the form
\begin{equation}\label{cau}\begin{cases} 
u_t + F(x,Du,\I(u,x)) = 0&  \mbox{in} \ Q_T\\
u(x,0)=u_0(x)& x\in \R^N.\end{cases} 
\end{equation}

{Here, $F \in C(\R^N \times \R^N \times \R)$ is \textsl{degenerate elliptic in the nonlocal variable}}, that is
\begin{equation*}
F(x,p,l_1) \leq F(x,p, l_2) \quad \mbox{for all} \quad x, p \in \R^N, \ l_1, l_2 \in \R, \ \mbox{such that} \ l_1 \geq l_2.
\end{equation*}

We introduce some notation. Let $\delta\in (0,1)$, and we denote with $B_\delta$ the ball centered at $0$ of radius $\delta$, with $B$ the ball of radius $1$,  and with $B_\delta^c$, $B^c$ the complements of such sets. Finally $B_\delta(x)$ will indicate the ball centered at  $x$ of radius $\delta$. 
 For $\phi\in C^2(\R^N\times (0, T))$ and $x\in \R^N$, $t\in(0,T)$,  we define the localized operator  
\begin{equation}\label{operatordelta} 
\I[B_\delta](\phi, x)= \int_{B_\delta} [\phi(x+z,t)-\phi(x,t)-\langle D\phi(x,t),z\rangle ]K^\sigma(z)dz.
\end{equation}  Moreover, for any $u\in L^\infty(\R^N\times(0, T))$, $p\in \R^N$ and $x\in\R^N$, $t\in(0,T)$, we define 
\begin{equation}\label{operatordelta2} 
\I[B_\delta^c](u, p,  x)= \int_{B_\delta^c} [u(x+z,t)-u(x,t)-\1_B(z)\langle p,z\rangle ]K^\sigma(z)dz.
\end{equation} 
Note that if $K^\sigma$ is symmetric, that is $K^\sigma(z)=K^\sigma(-z)$, due to its integrability properties we get that the previous operator is independent of $p\in\R^N$, 
that is 
\begin{equation}\label{operatordelta2sym} 
\I[B_\delta^c](u, p, x)=\I[B_\delta^c](u, x) =\int_{B_\delta^c} [u(x+z,t)-u(x,t) ]K^\sigma(z)dz.
\end{equation} 

\begin{defi}[Viscosity solutions]\label{defvisco} \upshape  \ \ \ \
\begin{itemize}
 \item  A bounded upper semicontinuous function  $u:\R^N\times(0, T]\to \R$ is a viscosity subsolution of \eqref{eq}  if 
 for any $(x,t)\in \R^N\times (0, T]$ and any 
test-function $\phi\in C^2(\R^N\times (0, T])$, such that $(x,t)$ is a maximum point of $u-\phi$ in   $B_\delta(x)\times (t-\delta, t+\delta)$, for a small $\delta>0$, 
there holds 
\[\phi_t(x,t)+ F \Big{(} x, D\phi(x,t), I[B_\delta](\phi, x) + I[B_\delta^c](u,D\phi(x,t), x) \Big{)} %a\left(x, \frac{x}{\epsilon}\right) I[B_\delta](\phi, x)+a\left(x, \frac{x}{\epsilon}\right) I[B_\delta^c](u,D\phi(x,t), x)+ H\left(x,\frac{x}{\epsilon}, D\phi(x,t)\right)
\leq 0. \]
 \item A bounded lower semicontinuous function  $u:\R^N\times(0, T]\to \R$ is a viscosity supersolution of \eqref{eq}  if 
 for any $(x,t)\in \R^N\times (0, T]$ and any 
test-function $\phi\in C^2(\R^N\times (0, T])$, such that $(x,t)$ is a minimum point of $u-\phi$ in   $B_\delta(x)\times (t-\delta, t+\delta)$,  for a small $\delta>0$,
there holds 
\[\phi_t(x,t)+ F \Big{(} x, D\phi(x,t), I[B_\delta](\phi, x) + I[B_\delta^c](u,D\phi(x,t), x) \Big{)} \geq %a\left(x, \frac{x}{\epsilon}\right) I[B_\delta](\phi, x)+a\left(x, \frac{x}{\epsilon}\right) I[B_\delta^c](u,D\phi(x,t), x)+ H\left(x,\frac{x}{\epsilon}, D\phi(x,t)\right)
 0. \]

%\[\phi_t(x,t)+ a\left(x, \frac{x}{\epsilon}\right) I[B_\delta](\phi, x)+a\left(x, \frac{x}{\epsilon}\right) I[B_\delta^c](u,D\phi(x,t), x)+ H\left(x,\frac{x}{\epsilon}, D\phi(x,t)\right)\geq 0. \]
\item A bounded  continuous function  $u:\R^N\times(0, T]\to \R$ is a viscosity  solution of \eqref{eq}   if it is both a subsolution and a supersolution. 
\end{itemize} 
\end{defi} 
 
\subsection{Existence and comparison principle for~\eqref{eq}-\eqref{initial}.}
In this section we present well known results about existence and uniqueness of solutions to  the Cauchy problem~\eqref{eq}-\eqref{initial}. 
We point out that we give  also a precise estimate on the behavior of the solutions to the parabolic problem  as $t\to 0$, that is estimate \eqref{initial1}, based on the 
uniform continuity assumption on the initial data,  which will be useful in comparing the weak upper and lower semilimits of $u^\epsilon$ as $\epsilon\to 0$. 
\begin{prop}
\label{existence}
Assume~\eqref{aK},~\eqref{H0},  \eqref{H1}, %and 
\eqref{H20}  hold and {$u_0\in BUC(\R^N)$}. Then there exists a unique bounded continuous 
viscosity solution to  the Cauchy problem~\eqref{eq}-\eqref{initial}.
Moreover, \begin{equation}\label{bound1}
|u^\epsilon|_{L^\infty(Q_T)} \leq |u_0|_\infty + |H(\cdot, \cdot, 0)|_\infty T
\end{equation}
and  %since $u_0 \in BUC(\R^N)$, 
there exists a modulus of continuity $\bar \omega$ (depending on the modulus of $u_0$) such that
\begin{equation}\label{initial1}
\sup_{x \in \R^N} |u^\epsilon(x,t) - u_0(x)| \leq \bar \omega(t) \quad \mbox{for all}  \ t > 0 , \ {\epsilon >0}.
\end{equation} 
\end{prop}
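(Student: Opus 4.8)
The plan is to obtain existence and uniqueness by the standard Perron-plus-comparison machinery for nonlocal parabolic equations, following \cite{BI} and \cite{BKLT}, the only additional point being the quantitative initial-layer estimate \eqref{initial1}. First I would record the comparison principle for \eqref{eq}-\eqref{initial}: this is essentially contained in \cite{BKLT} (or can be derived by the Barles--Imbert techniques of \cite{BI}), because the operator $F(x,p,l) = -a(x,\tfrac{x}{\epsilon})l + H(x,\tfrac x\epsilon,p)$ is degenerate elliptic in $l$, is coercive in $p$ by \eqref{strongcond}, and $x$ does not interact with $l$ (so the usual doubling-of-variables argument, with the nonlocal terms split via \eqref{operatordelta}--\eqref{operatordelta2} as in Definition \ref{defvisco}, closes without difficulty). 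The key input from \eqref{H1}, \eqref{H20} is the superlinear coercivity, which under \eqref{aK} forces subsolutions to be, a posteriori, Lipschitz in $x$ (uniformly in $\epsilon$ on compact time intervals bounded away from $0$), but for plain comparison of bounded semicontinuous sub/supersolutions one only needs \eqref{strongcond}.

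Next I would construct explicit barriers to run Perron's method and, simultaneously, to prove \eqref{initial1}. Since $u_0 \in BUC(\R^N)$, let $m_0$ be its modulus of continuity. The barrier construction is the classical one: for each $\xi\in\R^N$ one builds smooth functions of the form
\begin{equation*}
w^{\pm}_\xi(x,t) = u_0(\xi) \pm \big( \eta(|x-\xi|) + \lambda t \big),
\end{equation*}
where $\eta$ is a smooth, bounded, nondecreasing function with $\eta(0)=0$, $\eta(r)\ge m_0(r)$, and with bounded derivatives up to second order; $\lambda$ is chosen large (independently of $\xi$ and $\epsilon$) so that $w^+_\xi$ is a supersolution and $w^-_\xi$ a subsolution of \eqref{eq} on $Q_T$. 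This uses \eqref{aK} to bound $|\I(w^\pm_\xi,\cdot)|$ by a constant times $\sup(|\eta''|+|\eta'|)$ times a $\sigma$-dependent factor, \eqref{grow} to bound $|H(x,\tfrac x\epsilon, Dw^\pm_\xi)|$, and $|a|_\infty$ from \eqref{H0}; one must be slightly careful for $\sigma$ close to $2$ but the boundedness and continuity-at-the-origin of $\bar k$ make $\I[B_1]$ controlled by the $C^2$ norm and $\I[B_1^c]$ by the $L^\infty$ norm. By the comparison principle, the unique bounded solution $u^\epsilon$ (obtained by Perron as the sup of subsolutions squeezed between $w^-_\xi$ and $w^+_\xi$) satisfies $w^-_\xi(x,t)\le u^\epsilon(x,t)\le w^+_\xi(x,t)$ for every $\xi$; taking $\xi=x$ gives $|u^\epsilon(x,t)-u_0(x)|\le \eta(0)+\lambda t = \lambda t$, which is not yet a modulus tending to $0$ in the right way, so one refines by choosing, for each $t$, the optimal profile: a standard optimization over the family $\eta_\rho(r) = m_0(\rho) + K_\rho r^2/\rho^2$ (with $K_\rho$ the $C^2$-barrier constant at scale $\rho$) yields $|u^\epsilon(x,t)-u_0(x)| \le \bar\omega(t)$ for a modulus $\bar\omega$ depending only on $m_0$ and the structural constants, uniformly in $\epsilon$. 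The bound \eqref{bound1} is immediate, either from the same barriers with $\eta\equiv 0$ or directly: the constant functions $\pm(|u_0|_\infty + |H(\cdot,\cdot,0)|_\infty t)$ are a super/subsolution because of degenerate ellipticity ($\I(\text{const})=0$) and \eqref{H0}.

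The main obstacle is the initial-layer estimate \eqref{initial1} with a modulus $\bar\omega$ that is genuinely independent of $\epsilon$ and that actually goes to $0$ as $t\to 0^+$: the naive barrier gives only $O(t)$ with a constant, whereas one wants a modulus controlled by $m_0$. This forces the two-parameter barrier argument above (choosing the spatial width $\rho$ as a function of $t$), and one has to check that all the constants entering the super/subsolution inequalities — in particular the bound on $|\I[B_\rho](\eta_\rho,\cdot)|$, which scales like $\rho^{-\sigma}$ or worse — can be absorbed by the $\lambda t$ term uniformly in $\epsilon$; this is where the precise form of \eqref{aK} (boundedness of $\bar k$, so the kernel is comparable to that of the fractional Laplacian) and the superlinear growth $m>1$ in \eqref{H1} are used to keep the balance $\rho = \rho(t)\to 0$ compatible. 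Everything else — Perron, comparison, the $L^\infty$ bound — is routine given the cited results, so I would state those briefly and concentrate the write-up on the barrier optimization producing $\bar\omega$.
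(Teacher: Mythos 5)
Your overall strategy (comparison via \cite{BKLT}, Perron with explicit barriers, then a parametrized family of barriers to squeeze out \eqref{initial1}) is the same as the paper's, but there is one concrete misstatement in the comparison step that needs fixing, and a cosmetic difference in how \eqref{initial1} is extracted.

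On comparison: you write $F(x,p,l) = -a(x,\tfrac{x}{\epsilon})l + H(x,\tfrac x\epsilon,p)$ and then assert that ``$x$ does not interact with $l$''. That is false for this $F$: the coefficient $a(x,\tfrac x\epsilon)$ multiplies $l$, and this is exactly the interaction that prevents Proposition~3.1 of \cite{BKLT} from applying verbatim (indeed, making sense of comparison when $x$ and $l$ interact is the whole point of Section~\ref{seccomparison} of the paper). The fix is the one the paper uses: since \eqref{aK} gives $a_0\le a\le a_0^{-1}$, divide through by $a(x,\tfrac x\epsilon)$ and rewrite \eqref{eq} as $a^{-1}(x,\tfrac x\epsilon)\,u_t - \I(u,x) + a^{-1}(x,\tfrac x\epsilon)H(x,\tfrac x\epsilon,Du)=0$; in this form the nonlocal term genuinely decouples from the state variable and the doubling argument of \cite{BKLT} carries over (one uses the uniform continuity of $a^{-1}$ in place of that of $a$). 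You should state this rewriting explicitly; without it the reduction to \cite{BKLT} does not hold.

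On the initial-layer estimate: your two-parameter barrier $w^\pm_\xi$ with profile $\eta_\rho(r)=m_0(\rho)+K_\rho r^2/\rho^2$ is a valid alternative to what the paper does (mollify $u_0$ to $u_0^h=u_0*\rho_h$, use $U^h(x,t)=u_0^h(x)+\omega_0(h)+C(h)t$). They give the same scaling: the initial offset is $m_0(\rho)$ (resp.\ $\omega_0(h)$) and the drift constant is of order $\rho^{-2}\vee\rho^{-m}$ (resp.\ $h^{-2}\vee h^{-m}$), so optimizing over $\rho$ (resp.\ $h$) produces the same modulus $\bar\omega$. Two small remarks: your $\eta_\rho$ as written has unbounded gradient, so you need to truncate it at height $2|u_0|_\infty$ before plugging into the equation, and the truncation is what makes the $\rho^{-1}$ Lipschitz bound and hence the $\rho^{-m}$ coercive term correct; and what enters the drift constant is the growth bound \eqref{grow} on $H$ (a consequence of \eqref{H0}, \eqref{H20}), not the superlinear coercivity \eqref{H1} itself, so the sentence attributing the balance to $m>1$ in \eqref{H1} should be corrected. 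The paper's mollification is slightly cleaner because the mollified profile automatically has the needed $C^2$ bounds, but both routes work.
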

\begin{proof} 
A comparison principle for bounded viscosity sub and supersolutions which are well-ordered at time $t = 0$ %can be found
 is Proposition 3.1 in~\cite{BKLT}.  It does not apply directly to \eqref{eq} unless the coefficient $a$ multiplying the nonlocal operator $\I$ is constant.
%The only difference in this case is that the nonlocal operator depends on the state variable through the function $a$ defining it. 
However, in view of assumption~\eqref{aK}, equation~\eqref{eq} can be equivalently formulated as
\begin{equation*}
a^{-1}(x,x/\epsilon) u_t - \I(u,x) + a^{-1}(x, x/\epsilon)H(x, x/\epsilon, Du) = 0,
\end{equation*}
so that the nonlocal operator does not interact with the state variables {   $x, x/\epsilon$. Then,} using the continuity of $a$, we can %conclude the 
{   get the comparison result by a straightforward adaption of the proof in~\cite{BKLT}. }%usual doubling variables procedure.

Concerning existence, by~\eqref{aK} and~\eqref{H0},
 if $u_0 \in C^2(\R^N)$ with $|u_0|_{C^2(\R^N)}  < \infty$, then we see that the function $U(x,t) = u_0(x) \pm C_0 t$ with $C_0$ large enough in terms of $|u_0|_{C^2(\R^N)}$ is a supersolution (resp. a subsolution) for the problem solved by $u^\epsilon$. {   More precisely, $C_0$ can be chosen of the form 
 \[
 C_0 = C_1 |D^2u_0|_\infty + C_2 |Du_0|_\infty^m ,
 \]
with $C_1, C_2$ depending only on the constants in the assumptions, %data,
 thanks to the linearity of $\I$ and the growth \eqref{grow} of $H$.}
So  Perron's method leads to the existence of a %(discontinuous)
 viscosity solution to this problem. By stability arguments, it is possible to conclude the existence for initial data merely continuous by approximation. Moreover, by comparison principle the unique solution $u^\epsilon$ to problem~\eqref{eq}-\eqref{initial} is uniformly bounded in $Q_T$ for all $\epsilon > 0$, that is \eqref{bound1} holds.

We prove now \eqref{initial1}. 
If %$u_0\in C^2(\R^N)$,
{   $|u_0|_{C^2(\R^N)}  < \infty$} then \eqref{initial1} holds with %a linear function 
$\bar \omega(t)= C_0t$. 
In the general case, we consider a standard mollifier $\rho \in C^\infty(\R^N)$ with support in the unit ball and $\int_{B} \rho(x)dx = 1$, and its rescaled version $\rho_h(x) = h^{-N} \rho(x/h)$, $h > 0$. Then we define $u_0^h := u_0 * \rho_h$, which is a $C^\infty$ function with {   $|Du^h_0|_\infty \leq C h^{-1}$ and  $|D^2u^h_0|_\infty%|u_0^h|_{C^2}
 \leq C h^{-2}$}. Notice that for all $x \in \R^N$ we have
\begin{equation*}
|u_0^h(x) - u_0(x)| \leq h^{-N}\int_{B_h} |u_0(y) - u_0(x)|\rho((x - y)/h)dy \leq \omega_0(h),
\end{equation*}
where $\omega_0$ is the modulus of continuity of $u_0$. Therefore a function with the form
\begin{equation*}
U^h(x,t) = u_0^h(x)+ \omega_0(h) + C(h) t, 
\end{equation*}
is a supersolution for the problem solved by $u^\epsilon$, with a constant $C(h)$ %which is basically of order $h^{-2} + h^{-m}$). 
   of the form
\[
C(h)=CC_1h^{-2} + CC_2h^{-m} \leq C_3 h^{-\alpha}  , \quad \alpha= 2\vee m, \quad h\leq 1
\]
where $m>1$ is the constant appearing in \eqref{H1}, \eqref{H20}. 
Since a subsolution can be constructed in the same way, we have that
\begin{equation*}
\sup_{x \in \R^N} |u^\epsilon(x,t) - u_0(x)| \leq \inf_{h > 0} \{ 2 \omega_0(h) + C(h)t \}  {   \leq 2 \omega_0(t^{\frac 1{2\alpha}}) + C_3 t^{\frac 1{2}}} =: \bar \omega(t),
\end{equation*}
which proves~\eqref{initial1} and in particular leads to~\eqref{initial}.
\end{proof} 

%%%%%%%%%%%%%%%%%%%%%%%%%%%%%%%%%%%%%%%%%%%%%%%%%%%%%%%%%%%%%%%%%%%%%%%%%%%%%%
 %%%%%%%%%%%%%%%%%%%%%%%%%
\section{Comparison principle and uniqueness result for a class of nonlocal Hamilton-Jacobi operators}\label{seccomparison}

In this section we provide a comparison principle among semicontinuous viscosity sub and supersolutions and a uniqueness result for problems of %with
 the form~\eqref{cau}. {We need it for the effective problems addressed in Sections \ref{section1} and \ref{sectionminore} of this paper, which do not fall within the theory of \cite{BKLT}, different from 
% Of course, our main
the $\epsilon$-problem~\eqref{eq}.}
%falls into the setting described here, but also effective problems addressed in the last section of this paper. 

We consider the following continuity assumption: there  exists $n>0$ such that 
such that for all $x_i, p_i \in \R^N, l_i \in \R, \ i =1,2$,  
\begin{equation}\label{assF}
\begin{split}
& |F(x_1,p_1,l_1) - F(x_2, p_2, l_2)| \\
\leq & \ \omega\Big{(} |l_1 - l_2| + |x_1 - x_2|(1 +| l| + |p|^m)^n + |p_1 - p_2|(1 + |l| + |p|^m)^n \Big{)}, 
\end{split} 
\end{equation}
where $m>1$, 
 $\omega$ be a modulus of continuity, and $|p| = \max \{ |p_1|, |p_2|\},| l| = \max \{ |l_1|, |l_2|\}$.  

The initial condition $u_0\in BUC(\R^N)$  satisfies \eqref{initial1}. 

Note that  the nonlocal operator depends on the state variable: in this setting, the validity of a comparison principle among %general
 semicontinuous  sub- and supersolutions  is an open problem. We provide  in Theorem \ref{teoremac}   a comparison principle  by exploiting  regularization by sup-convolutions in the time variable and the uniform continuity of  the initial datum $u_0$. We will first need a technical result for the case $\sigma=1$, which requires sufficient regularity either of the subsolution or of the supersolution, and moreover it requires to control the behavior of sub- and supersolutions in a small neighborhood of the initial time.

\begin{prop}\label{lematech}
Assume $\sigma \leq 1$. 
 Let $u,v$ bounded, $u$ u.s.c in $\bar Q_T$, $v$ l.s.c in $\bar Q_T$ be, respectively, a viscosity sub- and supersolution to the PDE in %first equation
  \eqref{cau}, with $F$ satisfying \eqref{assF}.
Moreover we assume  \begin{equation}\label{tech}
u \leq v \quad \mbox{in} \ \R^N \times [0, d_0],
\end{equation}
for some $0 < d_0 < T$. Then there exists %$\epsilon_0=\epsilon
{$\alpha_0=\alpha_0(n,\sigma, m) <1$} such that, if $u$ or $v$ is in $C^\alpha(\bar Q_T)$ for some {$\alpha \in  (\alpha_0, 1)$}, %(\sigma - \epsilon_0, \sigma)$,
 then $u \leq v$ in $\bar Q_T$.
\end{prop}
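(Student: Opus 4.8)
The strategy is a doubling-of-variables argument adapted to exploit the ordering hypothesis \eqref{tech} on the strip $\R^N\times[0,d_0]$. Suppose, for contradiction, that $M:=\sup_{\bar Q_T}(u-v)>0$; by \eqref{tech} this supremum is attained (after the usual penalization) only for times $t>d_0$. The main device is to penalize time from below, working with the auxiliary function
\[
\Phi_{\varepsilon,\eta,\beta}(x,t,y,s) = u(x,t) - v(y,s) - \frac{|x-y|^2}{\varepsilon^2} - \frac{|t-s|^2}{\eta^2} - \beta\,\phi(t) - \text{(compactness terms)},
\]
where $\phi$ is increasing and blows up, or vanishes, near $t=d_0$, so that any maximizing sequence $(\bar x,\bar t,\bar y,\bar s)$ has $\bar t,\bar s$ bounded away from $d_0$ and hence lies in the interior where both $u$ and $v$ satisfy the equation. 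Because the problem is on all of $\R^N$ I would add a term like $\kappa(\langle x\rangle + \langle y\rangle)$ with $\langle x\rangle = (1+|x|^2)^{1/2}$, or use the boundedness of $u,v$ together with a localization argument, to guarantee the maximum is attained; since $u$ or $v$ is bounded and, crucially, one of them lies in $C^\alpha(\bar Q_T)$, the H\"older bound will be what controls the penalization terms quantitatively.

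The heart of the argument is the estimate on the nonlocal terms after one writes down the two viscosity inequalities at the doubled maximum point and subtracts. Using the test functions coming from $\Phi$, the local parts $\I[B_\delta]$ are handled by the standard second-order/matrix estimates (here for $\sigma\le 1$ even the $B_\delta$ contribution is mild), while the crossed nonlocal remainder $\I[B_\delta^c](u,\cdot,\bar x) - \I[B_\delta^c](v,\cdot,\bar y)$ is bounded above, as in Barles--Imbert \cite{BI}, by using $u(x+z)-v(y+z) \le M$ plus lower-order terms, exploiting again the H\"older regularity of one of the two functions to absorb the difference of the penalizing quadratics evaluated at translated points. Then assumption \eqref{assF} on $F$ enters: the difference $F(\bar x,p_1,l_1)-F(\bar y,p_2,l_2)$ is controlled by $\omega$ of a quantity containing $|\bar x-\bar y|(1+|l|+|p|^m)^n$ and $|p_1-p_2|(1+|l|+|p|^m)^n$. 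The gradients $p_i$ from the quadratic penalization are of size $|\bar x-\bar y|/\varepsilon^2$, so these terms carry powers of $\varepsilon$ that must beat the blow-up; this is exactly where the threshold $\alpha_0=\alpha_0(n,\sigma,m)$ comes from — one needs $|\bar x-\bar y|\lesssim \varepsilon^{\alpha/(1-\alpha)}$ or a similar rate from the H\"older bound, and then $|\bar x-\bar y|\cdot(|\bar x-\bar y|/\varepsilon^2)^{mn}\to 0$ only when $\alpha$ is close enough to $1$, the precise condition being a bookkeeping inequality among $\alpha$, $n$, $m$, and the order $\sigma$ of the local piece.

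The remaining steps are routine: after sending $\delta\to 0$, $\eta\to 0$, $\varepsilon\to 0$, $\beta\to 0$ (in the right order, keeping the maximum point in the interior via $\phi$), the two viscosity inequalities combine to yield $0 < M \le 0$, a contradiction, forcing $M\le 0$, i.e. $u\le v$ on all of $\bar Q_T$. I expect the principal obstacle to be the \textbf{simultaneous} management of three competing effects at the doubled maximum: keeping $\bar t,\bar s$ away from the ``free boundary'' $t=d_0$ where the comparison is assumed; obtaining a good a priori rate for $|\bar x-\bar y|$ from the one-sided H\"older regularity (which is subtler than in the classical second-order case because only one of the functions is regular and the nonlocal operator reaches everywhere); and then checking that the gradient growth $(1+|l|+|p|^m)^n$ in \eqref{assF}, combined with the $|p|\sim|\bar x-\bar y|/\varepsilon^2$ scaling, is dominated for $\alpha>\alpha_0$. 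The interplay of these is what pins down the explicit threshold $\alpha_0(n,\sigma,m)$, and getting that bookkeeping right — rather than any single estimate — is the delicate part.
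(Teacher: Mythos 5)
Your overall skeleton — double variables with quadratic penalizations in space and time plus a compactness term, use the $C^\alpha$ regularity of one semisolution to control the distance $|\bar x-\bar y|$ in terms of $\epsilon$, feed the rates into \eqref{assF}, and do bookkeeping on exponents to find $\alpha_0(n,\sigma,m)$ — matches the paper's proof. The key divergence is how you keep the doubled maximum away from the strip $\R^N\times[0,d_0]$. You propose adding an extra penalization $\beta\,\phi(t)$ designed to push the maximizer away from $t=d_0$, but this device is not needed and its interaction with the rest of the argument is not worked out (what $\phi$ is, how its derivative is absorbed, why it actually forces $\bar t,\bar s>d_0$ rather than just lowering the maximum value). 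The paper instead uses the H\"older regularity a \emph{second} time, directly: from $\Phi(\bar x,\bar y,\bar s,\bar t)\ge \tilde M>0$ and the $C^\alpha$ bound it deduces $\tilde M\le u(\bar y,\bar t)-v(\bar y,\bar t)+C(\epsilon^\alpha+\eta^{\alpha/2})$, so that \eqref{tech} forces $\bar t\ge d_0$ once $\epsilon,\eta$ are small, and then $\bar s,\bar t\ge d_0/2$. This is the cleaner mechanism, and it is also the one that explains why the H\"older assumption is indispensable rather than a technical convenience.

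A smaller imprecision: you wrote $|\bar x-\bar y|\lesssim \epsilon^{\alpha/(1-\alpha)}$; the actual rate obtained from $C|\bar x-\bar y|^\alpha\ge \epsilon^{-2}|\bar x-\bar y|^2$ is $|\bar x-\bar y|\le C\epsilon^{\theta}$ with $\theta=2/(2-\alpha)$ and hence $|\bar p|\le C\epsilon^{\theta-2}$. The threshold $\alpha_0$ is then read off by choosing $\delta$ as a suitable power of $\epsilon$ (different for $\sigma=1$ and $\sigma<1$, since the local part scales as $\epsilon^{-2}\delta$ versus $\epsilon^{-2}\delta^{1-\sigma}$) and making every exponent appearing inside $\omega(\cdot)$ positive. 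You correctly flag this bookkeeping as where $\alpha_0$ comes from, but your plan leaves it entirely unperformed, and it is precisely there that the $\sigma$-dependence of $\alpha_0$, as well as the need for $\alpha>1-\tfrac1{nm}$, emerges; the paper also has to keep track of a logarithmic factor $|\bar p||\log\delta|$ in the $\sigma=1$ case that your sketch does not mention.
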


\noindent
{\bf \textit{Proof:}} We assume that the $C^\alpha$ property corresponds to $u$. The case in which $v$ is H\"older follows the same lines.
By contradiction, we assume that
$$
\sup_{\bar Q_T} \{ u - v\} =: M > 0.
$$

Replacing $u$ by $u - \nu t$ for some $\nu > 0$ small enough in terms of $M$ and $T$, a classical argument allows us to assume that $u$ in fact satisfies the viscosity inequality
\begin{equation*}
u_t + F(x, Du, \I(u)) \leq -\nu \quad  \mbox{in} \ Q_T.
\end{equation*}

Then,  we double variables and approximate $M$ as follows 
\begin{equation}\label{defPhi}
M_{\epsilon, \eta, \beta}=\sup_{\bar Q_T\times \bar Q_T}\Phi(x,y,s,t) := \sup_{\bar Q_T\times \bar Q_T}(u(x,s) - v(y,t) - \chi_\beta(y) - \epsilon^{-2}|x - y|^2 - \eta^{-1}(s - t)^2),
\end{equation} where the parameters  $\epsilon, \eta,\beta > 0$ are small parameters that will go to $0$, and the function $\chi_\beta$ is constructed as follows, 
arguing as  in the proof of ~\cite[Theorem 3]{BI}. We consider a function $\chi \in C^2_b(\R)$ with $\|\chi\|_{C^2} <\infty$, $\chi = 0$ in $B_1$, $\chi \geq |u|_\infty + |v|_\infty + 1$ in $B_2^c$. For $\beta > 0$ we denote $\chi_\beta(x) = \chi(\beta x)$. 

 Observe that $\chi_\beta(x)> |u|_\infty + |v|_\infty + 1$ for all $|x|\geq 2/\beta$ 
 which ensures that the supremum defining $M_{\epsilon, \eta, \beta}$ is achieved and therefore the function $\Phi$ in \eqref{defPhi}
 attains its maximum at a point $(\bar x, \bar y, \bar s, \bar t)$ for all $\beta, \epsilon, \eta > 0$ small enough.
 
Moreover, again as in ~\cite[Theorem 3]{BI} we get that \begin{equation}\label{estpsibeta}
|D\chi_\beta|_\infty, |\I(\chi_\beta, \cdot)|_\infty\to 0\qquad \text{uniformly in $\R^N$ as $\beta\to 0$}.
%\quad |\I(\chi_\beta, \cdot)|_\infty \leq C \beta \log(\beta).
\end{equation}
Hence, for  $\beta> 0$ small enough in terms of $M$ we have 
\begin{equation}\label{Mbeta}
\sup_{\bar Q_T} \{ u(x,t) - v(x,t) - \chi_\beta(x) \} =: \tilde M \geq M/2,
\end{equation}
and this supremum is achieved at some point $(\hat x, \hat t) \in \bar Q_T$ with $|\hat x| \leq 2/\beta$. %and $\hat t >$. Notice that 
%
%Moreover, by~\eqref{tech} there exists $d_0 > 0$ such that $\hat t > d_0$ for all $\gamma, \beta$ small. 
%
%From here, {  we fix $\gamma$} in order the facts of the above discussion hold.
Using the inequality 
\begin{equation}\label{Phi}
\Phi(\bar x, \bar y, \bar s, \bar t) \geq \Phi(\hat x, \hat x, \hat t, \hat t) = \tilde M > 0,
\end{equation}
we see that $|\bar x - \bar y| \leq C\epsilon$ and $|\bar s - \bar t| \leq C \eta^{1/2}$. Using this and~\eqref{Phi} again together with the fact that $u$ is $C^\alpha$, we conclude that
\begin{equation*}
\tilde M \leq u(\bar y, \bar t) - v(\bar y, \bar t) + C (\epsilon^\alpha + \eta^{\alpha/2}),
\end{equation*} 
for all $\eta, \epsilon, \beta$ and a constant $C > 0$ not depending on these parameters. Then, for all $\epsilon, \eta$ small enough depending on $\tilde M$, assumption~\eqref{tech} implies that $\bar t \geq d_0$ and therefore, taking $\eta$ smaller if it is necessary, we conclude that $\bar s, \bar t \geq d_0/2$, independent of $\beta$.

% and that $\bar s, \bar t \geq d_0/2$ for all $\epsilon, \eta, \beta$ small enough, where $d_0$ is defined after~\eqref{Mbeta}.

Thus, we use the viscosity inequality for $u$ at $(\bar x, \bar s)$ and for $v$ at $(\bar y, \bar t)$, for each $\delta > 0$ we can write
\begin{equation}\label{testing1}
\begin{split}
2\eta^{-1}(\bar s - \bar t) + F(\bar x, \bar p, I_{\delta, 1} + I^\delta_1) & \leq -\nu \\
2\eta^{-1}(\bar s - \bar t) + F(\bar y, \bar q, I_{\delta, 2} + I^\delta_2) & \geq 0,
\end{split}
\end{equation}
where $\bar p = 2\epsilon^{-2}(\bar x - \bar y), \bar q = \bar p - D{\chi_\beta}(\bar y)$. For the nonlocal evaluations denotes as $I_{\delta, i}, I^\delta_i, \ i = 1,2$, we require some notation to split the analysis depending if $\sigma < 1$ or $\sigma = 1$. 
Denote {$\mathbb I_\sigma = 1$ if $\sigma = 1$, $\mathbb I_\sigma = 0$ if {$\sigma < 1%= 0
$}}, $\phi(x,y) := \epsilon^{-2}|x - y|^2 + {\chi_\beta}(y)$, and with this the integral terms
\begin{equation*}
\begin{split}
I_{\delta, 1} = & \int_{B_\delta} [\phi(\bar x + z, \bar y) - \phi(\bar x{, \bar y}) - \mathbb{I}_\sigma \langle \bar p, z \rangle]K^\sigma(z)dz, \\
I_{\delta, 2} = & -\int_{B_\delta} [\phi(\bar x, \bar y + z) - \phi(\bar x{, \bar y}) - \mathbb{I}_\sigma \langle \bar q, z \rangle]K^\sigma(z)dz, \\
I^\delta_1 = & \int_{B_\delta^c} [u(\bar x + z, \bar s) - u(\bar x, \bar s) - \mathbb{I}_\sigma \1_B \langle \bar p, z \rangle]K^\sigma(z)dz, \\
I^\delta_2 = & \int_{B_\delta^c} [v(\bar y + z, \bar t) - v(\bar y, \bar t) - \mathbb{I}_\sigma \1_B \langle \bar q, z \rangle]K^\sigma(z)dz,
\end{split}
\end{equation*}
where we have omitted the dependence of these quantities on the rest of the parameters for simplicity. 
%At this point we get rid of the time in the analysis to come. 
Subtracting the inequalities in~\eqref{testing1}, by the continuity of $F$ and the respective semicontinuity of $u, v$ we take limit as $\eta \to 0$ to arrive at
\begin{equation}\label{testing11}
F(\bar x, \bar p, I_{\delta, 1} + I^\delta_1) -  F(\bar y, \bar q, I_{\delta, 2} + I^\delta_2) \leq -\nu, 
\end{equation}
where $\tau \in [0,T]$ is such that $\bar s, \bar t \to \tau$ as $\eta \to 0$. We keep using the notation $\bar x, \bar y$ after taking $\eta \to 0$ for simplicity.

Using that $\Phi(\bar x, \bar y, \tau, \tau) \geq \Phi(\hat x, \hat x, \hat t, \hat t)$, the definition of $\phi$ and the property of $\I$ %nonlocal estimate
 in~\eqref{estpsibeta} we arrive at% estimates of the nonlocal term evaluated on the function $\psi_\beta$ lead us to
$$
I^\delta_1 \leq I^\delta_2 + o_\beta(1),
$$ 
where $o_\beta(1) \to 0$ uniformly on the rest of the parameters. 
Thus, by the elliptic monotonicity of $F$ in the nonlocal variable,~\eqref{testing11} leads us to
\begin{equation}\label{12}
F(\bar x, \bar p, I_{\delta, 1} + I^\delta_1) -  F(\bar y, \bar q, I_{\delta, 2}  + I^\delta_1 + o_\beta(1)) \leq -\nu .
\end{equation}

It is direct to check {using ~\eqref{estpsibeta} } that
\begin{equation*}
|I_{\delta,i}| \leq o_\beta(1) + C \epsilon^{-2} \left \{ \begin{array}{ll}  \delta \quad & \mbox{if} \ \sigma = 1 \\
\delta^{1 - \sigma} \quad & \mbox{if} \ \sigma < 1, \end{array} \right .
\end{equation*}
for each $i=1,2$.
On the other hand, using the $C^\alpha$ assumption for $u$ we see that  
\begin{equation*}
|I^\delta_1| \leq C\int_{B_\delta^c}|z|^{\alpha - N - \sigma}dz + C \mathbb I_\sigma |\bar p| \int_{B \setminus B_\delta} |z|^{1-N-\sigma}dz, 
\end{equation*}
from which we get
\begin{equation*}
{|I^\delta_1| \leq C \delta^{\alpha - \sigma} + C \mathbb I_\sigma |\bar p| |\log(\delta)|. }%\begin{cases} 
% C\delta^{\alpha - 1} + C \epsilon^{\theta - 2} |\log(\delta)|, & \sigma=1\\C\delta^{\alpha - \sigma} + C \epsilon^{\theta - 2} \delta^{1-\sigma},  &\sigma>1.\end{cases}
\end{equation*}

%{From here, we concentrate in
{Next we deal first with the case $\sigma = 1$.} % The case $\sigma < 1$ follows the same lines with simpler computations}.
Using~\eqref{Phi} once more we see that
\begin{equation*}
u(\bar x, \tau) - u(\bar y, \tau) - \epsilon^{-2}|\bar x - \bar y|^2 \geq 0.
\end{equation*}
%where by a slight abuse of notation we keep the notation $\bar x, \bar y$ after taking $\eta \to 0$.
%
Then, applying the $C^\alpha$ continuity of $u$ we conclude that %in the time variable and the $\alpha$-H\"older continuity of $u^\gamma$ in the state variable leads us to
\begin{equation*}
C|\bar x - \bar y|^\alpha \geq \epsilon^{-2}|\bar x - \bar y|^2,
\end{equation*}
for some constant depending on $\alpha$.
%
%
%\begin{equation}\label{Mbeta2}
%u(\bar x) - v(\bar y) - \psi_\beta(\bar y) - \epsilon^{-2}|\bar x - \bar y| \geq M/2,
%\end{equation}
%and adding $\pm u(\bar y)$ in the left-hand side of the above inequality, using the definition of $M_\beta$ and  the $\gamma$-H\"older regularity of $u$, there exists $C > 0$ just depending on the data and $\gamma \in (0,1)$ such that
%\begin{equation*}
%C |\bar x - \bar y|^\gamma \geq u(\bar x) - v(\bar y) \geq \epsilon^{-2}|\bar x - \bar y|^2.
%\end{equation*}
From here, denoting $\theta = 2/(2 - \alpha)$ we conclude that
\begin{equation}\label{x-y1}
|\bar x - \bar y| \leq C \epsilon^{\theta}, \quad \mbox{and} \quad |\bar p| \leq C \epsilon^{\theta - 2}.
\end{equation}

Notice that $\theta \to 2$ as $\alpha \to 1^-$.

%By continuity of $u$ and $v$ it is also possible to get that
%\begin{equation*}
%u(\bar x) - v(\bar y) \to M
%\end{equation*}
%as $\epsilon \to 0$.

%for appropriate function $w$. 

%Using a second-order Taylor expansion of $\phi$ inside the integral term it is direct to verify that
%\begin{equation*}
%I_{\delta, \epsilon} = O(\epsilon^{-2} \delta), \qquad \mbox{and} \qquad
%I_{\delta, \epsilon, \beta} = O(\epsilon^{-2}\delta) + o_\beta(1),
%\end{equation*}
%meanwhile, 

In view of the above estimates, we apply the continuity assumption on the Hamiltonian $F$ in~\eqref{12} to conclude 
\begin{equation}
\label{omega*} %align*}
\omega \Big{(} \epsilon^{-2}\delta + o_\beta(1) + (\epsilon^\theta + o_\beta(1)) \left[1 + \delta^{\alpha - 1} + {\epsilon^{-2}\delta } +\epsilon^{\theta - 2} |\log(\delta)| + \epsilon^{m(\theta - 2)}\right]^n \Big{)} \leq -\nu 
%&\qquad\text{ if }\sigma=1 
%\\\omega \Big{(} \epsilon^{-2}\delta + o_\beta(1) + (\epsilon^\theta + o_\beta(1)) (1 + \delta^{\alpha - \sigma} + \epsilon^{\theta - 2}\delta^{1-\sigma} + \epsilon^{m(\theta - 2)})^n \Big{)} \leq -\nu &\qquad \text{ if } \sigma>1.
\end{equation}%align*}

At this point we choose $\delta = \epsilon^{2+\kappa}$, $\kappa>0$ and $\beta < < \epsilon$ in order to have $o_\beta(1) = \epsilon^\theta$  to get, recalling that $m>1$ and $\theta<2$,  
\begin{equation}
\label{espeps}
\omega \Big{(} \epsilon^\kappa + \epsilon^\theta + \epsilon^{\theta+n(2+\kappa)(\alpha - 1)} + \epsilon^{\theta+nm(\theta - 2)} \Big{)} \leq -\nu % ,\\
%\omega \Big{(} \epsilon^\kappa + \epsilon^\theta + \epsilon^{\theta+n(2+\kappa)(\alpha - \sigma)} + \epsilon^{\theta+n[\theta - 2+(2+\kappa)(1-\sigma)]}+ \epsilon^{\theta+nm(\theta - 2)} \Big{)} \leq -\nu &\qquad\text{ if }\sigma>1, \label{espeps1}
\end{equation}
where we have replaced $\omega(\cdot)$ by $\omega(C \cdot)$ for $C > 0$ large enough. We show now that  we can choose $\kappa>0$ such that there exists a constant  $\alpha_0(n,\sigma, m  )\in (0,1)$ %depending on $n, \sigma, m $
 such that for $\alpha>\alpha_0(n,\sigma, m )$ all the exponents of $\epsilon$ in \eqref{espeps}  are positive. This  will give a contradiction sending $\epsilon \to 0$ since  $\nu > 0$ is fixed. 

Indeed, choosing $\kappa=2$ and recalling that $\theta=2/(2-\alpha)$ we observe that 
\[
\theta+ 4n(\alpha-1)>0 \quad \text{ if %and only if 
 } \;\alpha\in \left(\frac{3}{2}-\frac{1}{2}\sqrt{1+\frac{2}{n}},1\right)
\]
and
\[
\theta+ mn(\theta - 2)>0\quad  \text{if and only if }\quad \alpha>1-\frac{1}{nm}.
\]
  This implies the claim {for $\sigma=1$ by} choosing 
  \[
  \alpha_0(n,1,m )=\max\left(1-\frac{1}{nm}, \frac{3}{2}-\frac{1}{2}\sqrt{1+\frac{2}{n}}\right).
\] 

{ In the case $\sigma < 1$ we argue in the same way. Now \eqref{omega*} is replaced by
\begin{equation}
\label{omega} 
\omega \Big{(} \epsilon^{-2}\delta^{1-\sigma} + o_\beta(1) + (\epsilon^\theta + o_\beta(1)) \left[ 1 + \delta^{\alpha - \sigma} + {\epsilon^{-2}\delta^{1-\sigma} } + \epsilon^{m(\theta - 2)}\right]^n \Big{)} \leq -\nu 
\end{equation}
and \eqref{espeps} is replaced by
\begin{equation*}
%\label{espeps}
\omega \Big{(} \epsilon^{\kappa(1-\sigma)-2\sigma} + \epsilon^\theta + \epsilon^{\theta+n(2+\kappa)(\alpha - \sigma)} + \epsilon^{\theta+nm(\theta - 2)} \Big{)} \leq -\nu
\end{equation*}
Then we choose $\kappa> 2\sigma/(1-\sigma)$ and observe that
\[
\theta+ n(2+\kappa)(\alpha-\sigma)>0 \quad \text{ if %and only if
  } \;\alpha\in \left(\bar \alpha%\frac{3}{2}-\frac{1}{2}\sqrt{1+\frac{2}{n}}
,1\right) ,
\]
where $\bar \alpha:= \left(2+\sigma-\sqrt{(2-\sigma)^2+\frac 8{n(2+\kappa)}}\right)/2$. This proves the claim for $\sigma < 1$ by choosing
  \[
  \alpha_0(n,\sigma,m )=\max\left(1-\frac{1}{nm}, \bar \alpha\right).
\] 
}
%Let now consider the case $\sigma>1$ and we prove that all the exponents in \eqref{espeps1} are positve.  We use  $\theta=2/(2-\alpha)$ and we choose $\kappa=\frac{1}{n(\sigma-1)}-1>0$ (by assumption \eqref{assF1}). Then we get that 
%\[\theta+ n(2+\kappa)(\alpha-\sigma)>0 \text{ if and only if  } \alpha\in \left(\frac{1}{2}\left(2+\sigma-\sqrt{(2-\sigma)^2+\frac{8(\sigma-1)}{n(\sigma-1)+1}}\right),1\right).\]
%Note that $2+\sigma-\sqrt{(2-\sigma)^2+\frac{8(\sigma-1)}{n(\sigma-1)+1}}<2$.
%Moreover  \[\theta+n[\theta - 2+(2+\kappa)(1-\sigma)]=(1+n)(\theta-1)-n\sigma>0\quad  \text{if and only if }\quad \alpha>1-\frac{1}{nm}\]
%and finally
%\[\theta+ mn(\theta - 2)>0\quad  \text{if and only if }\quad \alpha>1-\frac{1}{nm}.\]   
%Note that 
%
%
%
%This implies the claim for $\sigma>1$, choosing \[\alpha(n,\sigma,m)= .....\] 
\qed
 
The key assumption on the regularity of the subsolution in the previous proposition can %is 
be obtained through the gradient dominance. We say that $F$ is {\em superlinear in the gradient} if there exist $m > 1$ and $C, c > 0$ such that
\begin{equation}
\label{superlin}
F(x,p,l) \geq c|p|^m - C(|l| + 1), \quad \mbox{for all} \ x,p \in \R^N, \ l \in \R.
\end{equation}

Now we are ready to prove a %the
 comparison principle for semicontinuous solutions to problem \eqref{cau} among functions $u$ {{\em attaining uniformly continuously the initial data}, namely, satisfying
  \begin{equation}
  \label{aucid}
\sup_{x \in \R^N} | u(x,t) - u_0(x)| %+ \sup_{x \in \R^N} |\underline u(x,t) - u_0(x)|
 \leq \omega_0(t), \quad t \geq 0,
\end{equation}
for some modulus $\omega_0$ (i.e., $\omega_0(t)\to 0$ as $t\to 0$.}). %}
\begin{teo}
\label{teoremac}
Assume that $\sigma\leq 1$,   $F$ satisfies \eqref{assF}, it is {degenerate elliptic in the nonlocal variable and superlinear in the gradient} {\eqref{superlin}}, and %the initial datum
 $u_0\in BUC(\R^N)$. % satisfies \eqref{initial1}. 
 Let  $\underline u$ be a bounded l.s.c supersolution to \eqref{cau}  in $Q_T$ and  %let 
 $\bar u$ be a bounded u.s.c subsolution to \eqref{cau} in $ Q_T$ {
  attaining uniformly continuously the initial data $u_0$. }%such  that $\underline u = \bar u = u_0$ in $\R^N \times \{ 0 \}$. 
% \begin{equation*}
%\sup_{x \in \R^N} |\bar u(x,t) - u_0(x)|+ \sup_{x \in \R^N} |\underline u(x,t) - u_0(x)| \leq \omega_0(t), \quad t \geq 0,
%\end{equation*} where $\omega_0(t)\to 0$ as $t\to 0$.} 
Then,  $\bar u \leq \underline u$ in $\bar Q_T$.

In particular, there exists %a unique
{at most one} %continuous
 viscosity solution to \eqref{cau} {among functions satisfying \eqref{aucid}.}
\end{teo}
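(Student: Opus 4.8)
The plan is to deduce Theorem \ref{teoremac} from Proposition \ref{lematech} by a regularisation-in-time argument. The missing hypothesis in the Proposition is that $\bar u$ be H\"older continuous and that $\bar u \le \underline u$ on an initial strip $\R^N \times [0,d_0]$. Both defects will be repaired by replacing $\bar u$ with a sup-convolution in time.

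\textbf{Step 1: sup-convolution in time.} For $\lambda>0$ define
\[
\bar u^\lambda(x,t) := \sup_{s\in[0,T]} \Big( \bar u(x,s) - \frac{|t-s|^2}{2\lambda} \Big).
\]
Using the superlinearity \eqref{superlin} together with the a priori estimates of \cite{BKLT} (which apply to the $\epsilon$-problem, but here are invoked for the subsolution of \eqref{cau} since $F$ is coercive of the same type), one obtains that $\bar u^\lambda$ is Lipschitz in $t$ and, crucially, locally Lipschitz — hence H\"older — in $x$ with a modulus that is uniform once $t$ is bounded away from $0$; this is where the coercivity \eqref{superlin} does the work, giving an $L^\infty$ gradient bound on the subsolution. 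One also checks, by the standard argument for sup-convolutions of viscosity subsolutions of degenerate elliptic nonlocal equations (as in \cite{BI}), that $\bar u^\lambda$ is again a subsolution of the PDE in \eqref{cau} in $\R^N\times(\lambda', T-\lambda')$ for a small $\lambda'$, up to an error $o_\lambda(1)$ in the equation coming from the fact that the coefficients of $F$ depend on $t$ only through $\bar u$; more precisely $\bar u^\lambda$ solves $(\bar u^\lambda)_t + F(x,D\bar u^\lambda, \I(\bar u^\lambda,x)) \le o_\lambda(1)$, which after subtracting a small multiple of $t$ becomes a genuine subsolution.

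\textbf{Step 2: the initial strip.} Here the assumption \eqref{aucid} that $\bar u$ attains $u_0$ uniformly continuously, together with $u_0\in BUC$ and the fact that $\underline u$ is a supersolution, is used. Since $\underline u$ is a bounded supersolution of \eqref{cau}, the construction in Proposition \ref{existence} (smooth super/subsolutions of the form $u_0^h(x) \pm \omega_0(h) \pm C(h)t$) combined with the comparison principle of \cite{BKLT} for the model problem shows $\underline u(x,t) \ge u_0(x) - \bar\omega(t)$ for a modulus $\bar\omega$ independent of $x$. On the other hand $\bar u^\lambda(x,t) \le \sup_s(\bar u(x,s)) \le u_0(x) + \omega_0(s^*) $ where the sup is essentially attained near $s\approx t$, so $\bar u^\lambda(x,t) \le u_0(x) + \omega_0(t) + Ct/\lambda^{1/2}$ roughly; letting first $t\to 0$ one gets, for every $\lambda$, a $d_0=d_0(\lambda)>0$ with $\bar u^\lambda - \nu t \le \underline u$ on $\R^N\times[0,d_0]$ once a small slope $\nu$ is subtracted. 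The honest bookkeeping is: for fixed $\lambda$, $\bar u^\lambda(\cdot,0)\le u_0$, and by continuity in $t$ of $\bar u^\lambda$ and the estimate $\underline u \ge u_0 - \bar\omega(t)$, the strip inequality $\bar u^\lambda \le \underline u$ holds on $[0,d_0(\lambda)]$ after subtracting $\nu t$.

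\textbf{Step 3: apply Proposition \ref{lematech} and pass to the limit.} Fix $\alpha\in(\alpha_0(n,\sigma,m),1)$ and choose $\lambda$ small enough that $\bar u^\lambda$ (minus $\nu t$) is a subsolution of \eqref{cau}, is $C^\alpha(\bar Q_T')$ on $\R^N\times[d_0/2,T]$ with the required exponent, and is $\le \underline u$ on the initial strip. Proposition \ref{lematech} then gives $\bar u^\lambda - \nu t \le \underline u$ on all of $\bar Q_T$ (working on $\R^N\times[d_0/2,T]$ and using the strip as the comparison zone near $d_0$). Now let $\lambda\to 0$: $\bar u^\lambda \searrow \bar u$ at every point of continuity and in general $\limsup_{\lambda\to0}\bar u^\lambda = \bar u$ (upper semicontinuity), and $\nu t\to$ itself, so $\bar u - \nu t\le \underline u$; finally let $\nu\to 0$ to conclude $\bar u\le \underline u$ in $\bar Q_T$. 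Uniqueness among functions satisfying \eqref{aucid} is then immediate: two such solutions are each sub- and supersolution, so each dominates the other.

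\textbf{Main obstacle.} The delicate point is Step 1: showing that the sup-convolution $\bar u^\lambda$ is genuinely H\"older in $x$ with exponent above the threshold $\alpha_0(n,\sigma,m)$, uniformly away from $t=0$, and that it remains an (almost) subsolution of the nonlocal equation despite the $x$-dependence of $F$ and the nonlocal term $\I(\cdot,x)$. The H\"older regularity must come from the coercivity \eqref{superlin} via the gradient estimates of \cite{BKLT}; one has to make sure those estimates, stated there for the oscillatory problem, transfer to abstract coercive subsolutions of \eqref{cau}, and that the resulting exponent can be taken as close to $1$ as needed — which it can, since coercive Hamilton–Jacobi subsolutions are actually Lipschitz. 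The other technical care is that the nonlocal operator in the viscosity inequality for the sup-convolution splits correctly, i.e. the standard fact that sup-convolution preserves subsolution status for degenerate elliptic integro-differential equations (Jensen–Ishii type lemma in the nonlocal setting, \cite{BI}) applies here.
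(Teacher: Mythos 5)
Your proposal follows the paper's proof essentially verbatim: sup-convolution in time, use of \eqref{aucid} to get ordering on an initial strip after subtracting small constants, H\"older/Lipschitz bounds for the sup-convolution from the bounded time-derivative plus coercivity via \cite[Theorem 2.1]{BKLT}, then Proposition~\ref{lematech} and passage to the limit. The only small inaccuracy is that no $o_\lambda(1)$ correction is needed in Step~1 since $F$ is time-independent, so the time sup-convolution is an exact subsolution away from $t=0$; the paper simply asserts this and the subtraction of $\nu t$ you invoke is used for the contradiction argument inside Proposition~\ref{lematech}, not to absorb an error from the convolution.
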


\noindent
{\bf \textit{Proof:}} It is sufficient to prove that  $\bar u \leq \underline u$ in $\bar Q_T$, since the uniqueness of the continuous viscosity solution is a direct consequence of this. 
%First of all, we note that ~\eqref{initial1} and the uniform continuity of $u_0$ implies the stronger condition
%\begin{equation*}
%\sup_{x \in \R^N} |\bar u(x,t) - u_0(x)|+ \sup_{x \in \R^N} |\underline u(x,t) - u_0(x)| \leq \omega_0(t), \quad t \geq 0.
%\end{equation*}
%
%As a corollary of the previous result, we have the uniqueness for the effective problem in the current case.
%\begin{prop}[Uniqueness of the effective problem]\label{compbarF}
%Assume hypotheses of Proposition~\ref{propcell1} hold and consider the effective problem
%\begin{equation}\label{eqbarH1}
%%\left \{ \begin{array}{rll} 
%u_t + \bar H(x, Du, \I(u))  = 0 \quad  \mbox{in} \ Q_T,
%% \\ u(\cdot, 0) & = u_0 \quad & \mbox{in} \ \R^N, \end{array} \right .
%\end{equation}
%associated to the cell problem~\eqref{cellprob1}. Let $u, v$ bounded, respectively u.s.c. viscosity subsolution to~\eqref{eqbarH1} and l.s.c. viscosity supersolution to~\eqref{eqbarH1} such that $u(x,0) \leq v(x,0)$ for all $x \in \R^N$ and there exists a modulus of continuity $\omega$ such that $u(x,t) \leq u(x,0) + \omega(t)$ and $v(x,t) \geq v(x,0) - \omega(t)$ for all $x \in \R^N$ and all $t > 0$ small. Then $u \leq v$ in $Q_T$.
%\end{prop}
%
%\noindent
%{\bf \textit{Proof:}} 
For $\gamma > 0$ and $(x,t) \in \bar Q_T$ we consider
\begin{equation*}
\bar u^\gamma(x,t) = \sup \limits_{s \in [0,T]} \{ \bar u(x,s) - \gamma^{-1}|s - t|^2 \}, %\quad v_\gamma(x,t) = \sup \limits_{s \in [0,T]} \{ u(x,s) - \gamma^{-1}|s - t|^2 \}
\end{equation*}
and present some well-known properties for this regularization. Since $\bar u$ is u.s.c., for each $(x,t) \in \bar Q_T$, there exists $\tilde s$ depending on $x,t$ and $\gamma$ such that $\bar u^\gamma(x,t) = u(x, \tilde s) - \gamma^{-1}|t - \tilde s|^2$ and from here, noticing that $\bar u \leq \bar u^\gamma$, it is possible to conclude that $|t - \tilde s| \leq 2|u|_\infty \sqrt{\gamma}$. Using again the u.s.c. of $\bar u$, we see that $\bar u^\gamma \to \bar u$ as $\gamma \to 0$ locally uniformly in $\bar Q_T$.  

In particular, we see that for all $x$ we can write
\begin{equation*}
\bar u^\gamma(x,t) - u_0(x) \leq \bar u(x,\tilde s) - u_0(x) \leq \omega_0(t + 2|u|_\infty \sqrt{\gamma}),
\end{equation*}
where $\omega_0$ comes from \eqref{aucid}, and therefore, that for all $d > 0$ small enough, there exists $\gamma$ small in terms of $d$ such that
%
%Concerning the initial condition, given $d > 0$ and for each $(x,t) \in \bar Q$ with $t \in [0,d/2]$, recalling the definition of $\tilde s$ above, we have
%\and therefore, taking $\gamma$ small enough in terms of $d$ we conclude that 
$$
\bar u^\gamma(x,t) - u_0(x) \leq \omega(d), \quad \mbox{for all} \ (x,t) \in \R^N \times [0,d],
$$ 
where $\omega$ is a modulus of continuity. 

At this point, we consider $d > 0$ fixed and define 
\begin{equation}\label{barw}
\bar w(x,t) := \bar u^\gamma(x,t) - 2(\omega(d) + \omega_0(d)).
\end{equation}
Then it is easy to see that 
$$
\bar w \leq \underline u, \quad \mbox{in} \ \R^N \times [0, d/2].
$$ 

On the other hand, standard arguments concerning sup-convolutions lead us to prove that $\bar w$ solves
\begin{equation*}
w_t + F(x, Dw, \I w) \leq 0 \quad \mbox{in} \ \R^N \times (a_\gamma, T],
\end{equation*}
where $a_\gamma > 0$ is such that $a_\gamma \to 0$ as $\gamma \to 0$. By definition the function $t \mapsto \bar u^\gamma(x,t)$ is Lipschitz continuous in $[0,T]$, uniformly in $x$ with Lipschitz constant proportional to $\gamma^{-1}$.

%Now we consider separately the case $\sigma= 1$ and $\sigma<1$. 

%\noindent {\bf Case 1: $\sigma= 1$}.

Due to the Lipschitz continuity of the map $t \mapsto \bar u^\gamma(x,t)$, we get that  $\bar w_t$ is bounded and {in view of the superlinear coercivity of the gradient}, we can use the H\"older estimates in~\cite[Theorem 2.1]{BKLT} to obtain $C^\alpha$ estimates for $\bar w$. In fact, if $\sigma = 1$, for each $\alpha \in (0,1)$, there exists $C$ depending on $\alpha$ and $\gamma$ such that
$$
|\bar w(x,s) - \bar w(y,t)| \leq C (|s - t| + |x - y|^\alpha), \quad \mbox{for} \ x,y \in \R^N, \ s,t \in [2a_\gamma,T].
$$ 
If $\sigma < 1$, then there exists $C > 0$ depending on $\sigma$ and $\gamma$ such that
$$
|\bar w(x,s) - \bar w(y,t)| \leq C (|s - t| + |x - y|), \quad \mbox{for} \ x,y \in \R^N, \ s,t \in [2a_\gamma,T].
$$ 
In both cases, we can fix the parameters to fulfill the requirements of Proposition~\ref{lematech}, which allows us to conclude that $\bar w \leq \underline u$ in $\R^N \times (2a_\gamma, T]$ for all $\gamma$ small enough. This implies that
\begin{equation*}
\bar u^\gamma \leq \underline u + 2(\omega(d) + \omega_0(d)) \quad \mbox{in} \  \R^N \times (2a_\gamma, T],
\end{equation*}
which implies, taking $\gamma \to 0$ that $\bar u \leq \underline u + 2(\omega(d) + \omega_0(d))$ in $\bar Q_T$. Since $d > 0$ is arbitrary, we arrive to $\bar u \leq \underline u$ in $\bar Q_T$.

 \qed

\section{The cell problems for the homogenization}\label{secexpansion} 
We consider  the  formal asymptotic expansion ~\eqref{uepsexpansionintro} and we plug it  in the equation \eqref{eq} in order to get  the effective operator, 
through the solution of the so called cell problem.

We introduce some notation.  We will denote $y=x/\epsilon$, $p=D\bar u(x,t)$, $c=-\bar u_t(x,t)$ and 
\[
l= \I(\bar u(\cdot, t),x)=\int \limits_{\R^N} [\bar u(x + z,t) - \bar u(x,t) - \1_B(z) \langle D\bar u(x,t), z \rangle]K^\sigma(z)dz. 
\]

Moreover we denote $\psi_\epsilon(x) = \psi(x/\epsilon)$ and for $e > 0$ we introduce the notation
\begin{equation}\label{defdelta}
\delta_e(v, x, z) = v(x + z) - v(x) - \1_{B_e}(z) \langle Dv(x), z \rangle,
\end{equation}
where $\1_{B_e} = \1_{B_e}^{(\sigma)}$ denotes the indicator function of $B_e$, the open ball centered at the origin with radius $e$ if $\sigma \geq 1$, and the zero function if $\sigma < 1$.

Plugging the formal asymptotic expansion ~\eqref{uepsexpansionintro} into the equation \eqref{eq}, we obtain
\begin{equation}
\label{cella1}
 -a(x,y)l-a(x,y) \epsilon^{1\vee \sigma}\I(\psi_\epsilon, x) +H(x,y, p+\epsilon^{0\vee (\sigma-1)}D\psi(y))=c.
 \end{equation}
%{  We consider the slight abuse of notation to write
%\begin{equation}\label{auton}
%\I_K(u,x) = \int_{\R^N}\delta_1(u, x,z)K(z)|z|^{-(N + \sigma)}dz.
%\end{equation}
%} 
%and where $(-\Delta)^{1/2}$ denotes the square root of the Laplacian (up to a factor $C_N/2$).

%We concentrate in the case $\sigma \geq 1$. The case $\sigma < 1$ follows simpler computations because of the absence of the compensator term. 
%, but the case $\sigma = 1$ shows  {  particular difficulties related to the presence of this compensator term which are described later.} 

%We write
%\begin{align*}
%\I_K( \psi_\epsilon, x) = 
%& \int_{\R^N} \delta_1(\psi_\epsilon, x, z) K(z)|z|^{-(N + \sigma)}dz
%\end{align*}
Performing the change of variables $\xi = z/\epsilon$ we get that
\begin{align*}
\I(\psi_\epsilon, x) = 
& \epsilon^{N} \int_{\R^N} \delta_{\epsilon^{-1}} (\psi, x/\epsilon, \xi) K^\sigma(\epsilon \xi)d\xi.%[\psi(x/\epsilon + \xi)  - \psi(x/\epsilon) - \1_{B}(\epsilon \xi) \langle D\psi(x/\epsilon), \xi \rangle]  ,
\end{align*}
Using  assumption~\eqref{aK} and~\eqref{ko} we obtain 
%  the radial symmetry  of the kernel $\xi \mapsto |\xi|^{-(N + \sigma)}$ allows us to write
%At this point, recalling that $K(0) = 1$ and using the radiality of the kernel $z \mapsto |z|^{-(N + \sigma)}$ we can write
\begin{equation}\label{split1}
\begin{split}
\I(\psi_\epsilon, x) =  \epsilon^{-\sigma}\Big{(} -(-\Delta)^{\sigma/2} \psi_\epsilon +  J (\psi_\epsilon, x)\Big{)},
\end{split}
\end{equation}
where
\begin{equation}\label{j}
J (\psi_\epsilon, x)= \int_{\R^N} \delta_{\epsilon^{-1}}(\psi, x/\epsilon, \xi) \Big{(} \bar k(\epsilon \xi) - \bar k(0) \Big{)} |\xi|^{-(N + \sigma)}d\xi.
\end{equation}

We prove now  the following claim: 
\begin{equation}\label{estJ}
\|J(\psi_\epsilon,x)\|_\infty =\begin{cases}  o_\epsilon(1) & \sigma \in (0,2), \sigma\neq 1\\ 
 \langle b, D\psi(x/\epsilon) \rangle + o_\epsilon(1), & \sigma =1, \end{cases} 
\end{equation} 
where 
\begin{equation}\label{J1}
b :=  \lim_{\rho \to 0} \int_{B \setminus B_\rho} \frac{(\bar k(z) - \bar k(0))}{|z|^{N + 1}}z dz \in \R^N,
\end{equation}
and where $o_\epsilon(1) \to 0$ as $\epsilon\to 0$ only depends on $N, \sigma$, $C^{\sigma + \alpha}$ estimates of $\psi$, $\alpha > 0$, and $\bar \omega$ in~\eqref{extracond1} when $\sigma = 1$.
Note that if {   $\bar k$ (and then $K^1$)} is symmetric, then $b=0$. This means 
that the nonlocal term develops an extra drift term when the kernel defining it is nonsymmetric
and satisfies the integrability condition \eqref{extracond1} with respect to the kernel of the square root of the Laplacian.

%The aim in what follows is to show that this last term $J$ is actually of lower in order. 
In order to prove the claim, we introduce some notation. For $A \subseteq \R^N$ measurable we write 
\begin{equation*}
\begin{split}
J[A] = \int_{A} \delta_{\epsilon^{-1}}(\psi, x/\epsilon, \xi) (\bar k(\epsilon \xi) - \bar k(0)) |\xi|^{-(N + \sigma)}d\xi.%\\
%J_2 = & \int_{B_{1/\sqrt{\epsilon}} \setminus B} \delta_{\epsilon^{-1}}(\psi, x/\epsilon, \xi) \Delta K |\xi|^{-(N + \sigma)}d\xi, \\
%J_3 = & \int_{B_{1/\sqrt{\epsilon}}^c} \delta_{\epsilon^{-1}}(\psi, x/\epsilon, \xi) \Delta K |\xi|^{-(N + \sigma)}d\xi,
\end{split}
\end{equation*}

%We also are going to adopt the slight abuse of notation $\bar \omega(\epsilon |\xi|) = \bar k(\epsilon \xi) - \bar k(0)$.

Then, we split $J$ in~\eqref{j} as
\begin{equation*}
J = J[B] + J[B_{1/\epsilon} \setminus B] + J[B_{1/\epsilon}^c],
\end{equation*}
and we estimate each term separately. %At this point we require the following continuity assumption over the kernel $K$ at the origin: 

For $J[B]$ we perform a second-order Taylor expansion for $\psi$ in the integral term and using that $\bar k(\epsilon \xi) - \bar k(0) \leq \bar\omega(\epsilon)$ for $\xi \in B$ together with the fact that $\sigma < 2$ we arrive at
%\begin{equation*}
%|\delta(\psi, x/\epsilon, \xi)| \leq |D^2 \psi|_{B_{1}(x/\epsilon)}|\xi|^2/2
%\end{equation*}
%and using this and the  definition of $\Delta K(\xi, \epsilon)$ we get
\begin{equation*}
|J[B]| \leq \frac{1}{2} \bar \omega(\epsilon) |D^2 \psi|_\infty \int_{B} |\xi|^{-N - \sigma + 2}d\xi \leq C |D^2 \psi|_\infty\bar \omega(\epsilon),
\end{equation*}
%and since $\sigma < 2$ we conclude that
%\begin{equation*}
%|J_1| \leq C |D^2 \psi|_{L^\infty(\R^N)} \omega_K(\epsilon),
%\end{equation*}
for some constant $C=C(N, \sigma) > 0$ not depending on $\epsilon$.

For $J[B_{1/\epsilon}^c]$ we notice that the compensator term  $\1_{B_{\epsilon^{-1}}}(z) \langle Du(x), z \rangle$ is no longer present in the integral and therefore we have that
\begin{equation*}
|J[B_{1/\epsilon}^c]| \leq 4|\psi|_\infty |\bar k|_\infty \int_{B_{1/\epsilon}^c} \frac{d\xi}{|\xi|^{N + \sigma}} \leq C |\psi|_\infty \epsilon^{\sigma}.
\end{equation*}

%Notice that in the estimate of this term it is sufficient to have $\psi \in C^{1 + \alpha}$ with $\alpha > 0$ to control the singularity of the kernel $|z|^{-(n + 1)}$.

It remains to estimate $J[B_{1/\epsilon} \setminus B]$, and at this point we separate the cases $\sigma \neq 1$ and $\sigma = 1$. 

For the case $\sigma\neq 1$,  we split the remaining integral as 
$$
J[B_{1/\epsilon} \setminus B] = J[B_{1/\epsilon} \setminus B_{\theta_\epsilon}] + J[B_{\theta_\epsilon} \setminus B],
$$
where $\theta_\epsilon \to \infty$ and $\epsilon \theta_\epsilon \to 0$ as $\epsilon \to 0$. With this choice, we see that
\begin{equation*}
|J[B_{\theta_\epsilon} \setminus B]|\leq 
\begin{cases} 
\bar \omega(\epsilon \theta_\epsilon) (2|\psi|_\infty + |D\psi|_\infty) \int_{B_{\theta_{\epsilon}} \setminus B} \frac{d\xi}{|\xi|^{N + \sigma - 1}}\Big{)}\leq  C( |\psi|_\infty + |D\psi|_\infty )\bar \omega(\epsilon \theta_\epsilon), & \sigma>1\\
\bar \omega(\epsilon \theta_\epsilon)   2|\psi|_\infty \int_{B_{\theta_{\epsilon}} \setminus B} \frac{d\xi}{|\xi|^{N + \sigma}}\leq C |\psi|_\infty \bar \omega(\epsilon \theta_\epsilon)+o_\epsilon(1) &\sigma<1\end{cases} 
\end{equation*}
for some $C=C(N, \sigma)>0$ not depending on $\epsilon$. 

Similarly, for $J[B_{1/\epsilon} \setminus B_{\theta_\epsilon}]$ we have  
\[
|J[B_{1/\epsilon} \setminus B_{\theta_\epsilon}]| \leq 
\begin{cases} 2|\bar k|_\infty \left(2|\psi|_\infty      + |D\psi|_\infty \right)\int_{B_{\theta_\epsilon}^c} \frac{d\xi}{|\xi|^{N + \sigma - 1}}
\leq  C(|\psi|_\infty + |D\psi|_\infty) \theta_\epsilon^{1 - \sigma}& \sigma>1\\ 
 4|\bar k|_\infty  |\psi|_\infty \int_{B_{\theta_\epsilon}^c} \frac{d\xi}{|\xi|^{N + \sigma}} \leq C |\bar k|_\infty  |\psi|_\infty  \theta_{\epsilon}^{-\sigma}+o_\epsilon(1) & \sigma<1.\end{cases}
\]

Hence, joining the above estimates we conclude~\eqref{estJ} {   if $\sigma\ne 1$}.

\medskip
We consider now the case $\sigma=1$. First of all note that  the estimates for $J[B]$ and $J[B_{1/\epsilon}^c]$ follow the same lines above.
Moreover observe that, if $\bar k$ is symmetric, then
\[\int_{ B_{1/\epsilon} \setminus B} \langle D\psi(x/\epsilon), z \rangle \frac{(\bar k(\epsilon z)-\bar k(0))}{|z|^{N + 1}}dz=0,
 \]
therefore we can estimate $J[B_{1/\epsilon} \setminus B]$ exactly as in the case $\sigma<1$.

In the nonsymmetric case,  we consider the term $\theta_\epsilon$ present in the previous analysis for $\sigma<1$ to write
\begin{align*}
J[B_{1/\epsilon} \setminus B] = & \int_{B_{1/\epsilon} \setminus B} [\psi(x/\epsilon + \xi) - \psi(x/\epsilon)] \frac{\bar k(\epsilon \xi)-\bar k(0)}{|\xi|^{N + 1}}d\xi 
 + \int_{B_{1/\epsilon} \setminus B} \langle D\psi(x/\epsilon), \xi \rangle \frac{\bar k(\epsilon \xi)-\bar k(0)}{|\xi|^{N + 1}}d\xi \\
\leq  & C |\psi|_\infty( \bar\omega(\epsilon\theta_\epsilon)+|\bar k|_\infty \theta_\epsilon^{-1}) + \left\langle D\psi(x/\epsilon),\int_{B \setminus B_\epsilon}   \frac{\bar k(z)-\bar k(0)}{|z|^{N + 1}}zdz \right\rangle,
\end{align*}
where in the last integral we have performed the change of variables $z = \epsilon \xi$. 
We observe that, by definition \eqref{kernel} and assumption \eqref{extracond1}, 
\[ \left| \frac{\bar k(z)-\bar k(0)}{|z|^{N + 1}}z\right|\leq  \frac{\bar \omega(|z|)}{|z|^{N}} \in L^1(B).\] 
Hence,  the Dominated Convergence Theorem allows us to conclude~\eqref{estJ}. This finishes the proof of the  claim.

\medskip
Therefore, using \eqref{estJ} in \eqref{cella1}, we conclude
with different cell problems, according to the value of $\sigma$. 

\begin{description}
\item[Case $\sigma<1$] in this case \eqref{cella1} reads 
 \[-a(x,y) l +a(x,y)\epsilon^{1-\sigma} ((-\Delta)^{\sigma/2}\psi(y)+o_\epsilon(1)) + H(x, y, p + D\psi(y))=c.\]
  So the cell problem 
is the following:  for every $(x,p, l)\in \R^N\times\R^N\times \R$ there exists a unique $c=c(x,p,l)$ such that there exists a periodic viscosity solution to
\begin{equation}\label{cell<1} -a(x,y) l  + H(x, y, p + D\psi(y))=c \qquad y\in\T^N.
\end{equation} 
\item[Case $\sigma>1$] in this case   \eqref{cella1} reads 
\[-a(x,y) l +a(x,y)((-\Delta)^{\sigma/2}\psi(y)+o_\epsilon(1)) + H(x, y, p + \epsilon^{\sigma-1}D\psi(y))=c.\]   So the cell problem 
is the following: for every $(x,p, l)\in \R^N\times\R^N\times \R$ there exists a unique $c=c(x,p,l)$ such that there exists a periodic viscosity solution to
\begin{equation}\label{cell>1} -a(x,y) l  + a(x,y)  (-\Delta)^{\sigma/2} \psi(y) +H(x, y, p )=c \qquad y\in\T^N.
\end{equation} 
\item[Case $\sigma=1$] in this case   \eqref{cella1} reads 
\[-a(x,y) l +a(x,y)((-\Delta)^{\sigma/2}\psi(y)+\langle b,  D\psi(y) \rangle+o_\epsilon(1)) + H(x, y, p +  D\psi(y))=c.\]   So the cell problem 
is the following: for every $(x,p, l)\in \R^N\times\R^N\times \R$ there exists a unique $c=c(x,p,l)$ such that there exists a periodic viscosity solution to
\begin{equation}
\label{cell=1}
-a(x,y) l  + a(x,y)  {   (-\Delta)^{1/2}} \psi(y) +a(x,y) \langle b,  D\psi(y) \rangle + H(x, y, p+D\psi(y) )=c  
\end{equation} 
for $y\in\T^N$, where $b\in\R^N$ is defined in \eqref{J1} (and it is identically $0$ if $K^1$ is symmetric).
\end{description} 

\begin{remark}\upshape
Looking at the computations related to $J(\phi_\epsilon, x)$ made above in the case $\sigma = 1$, we see that if we consider nonlocal operators written in the \textsl{second order finite differences} form
\begin{equation*}
\int_{\R^N} [u(x + z) + u(x - z) - 2u(x)] K^1(z)dz
\end{equation*}
assumption~\eqref{extracond1} can be dropped. 
\end{remark}

%%%%%%%%%%%%%%%%%%%%%%%%%%%%%%%%%%%%%%%%%%%%%%%%%%%%%%%%%%%%%%%%%%
%%%%%%%%%%%%%%%%%%%%%%%%%%%%%%%%%%%%%%%%%%%%%%%%%%%%%%%%%%%%%%%%%%

\section{Homogenization for the case $\sigma = 1$} \label{section1} 
We start studying the cell problem introduced above. 
\begin{prop}[Cell problem]\label{propcell1}
Assume~\eqref{aK} {   with $\sigma = 1$},~\eqref{H0}, {  \eqref{H1}}, % \eqref{strongcond}}, 
and~\eqref{A}. If $K^1$ is not symmetric, we additionally assume that condition~\eqref{extracond1} holds. 

Then, % and let $b$ defined in~\eqref{J1}. 
%\begin{equation}\label{H2}
%{   |D_pH(x,y, p)| \leq l_H (1 + |p|^{m - 1}) \quad \mbox{for all} \ x, p \in \R^N, y \in \T^N.}
%\end{equation}
%If that condition~\eqref{A} holds, then 
for each $x, p, l$, there exists a unique constant $c = \bar H(x,p,l)$ such that the cell problem~\eqref{cell=1}
has a classical solution $\psi \in C^{1, \alpha}$ for some $\alpha \in (0,1)$, and such solution is unique up to an additive constant.

Moreover, the following estimate holds
\begin{equation}\label{C1alpha}
|(-\Delta)^{1/2} \psi|_{L^\infty(\T^N)} \leq C (1 + |l| + |p|^m)^m,
\end{equation}
where $C > 0$ does not depend on $x, l$ nor $p$.
\end{prop}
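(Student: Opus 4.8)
The plan is to obtain $(\psi,\bar H(x,p,l))$ as a limit of solutions of the associated discounted problems and then bootstrap regularity; throughout, $x,p,l$ are frozen parameters. First I would solve, for each $\lambda>0$, the stationary problem on $\T^N$
\begin{equation*}
\lambda\psi_\lambda + a(x,y)(-\Delta)^{1/2}\psi_\lambda + a(x,y)\langle b, D\psi_\lambda(y)\rangle + H(x,y,p+D\psi_\lambda(y)) = a(x,y)\,l .
\end{equation*}
Dividing by $a(x,y)\geq a_0>0$ removes the $y$-dependent coefficient in front of the nonlocal term, so a comparison principle holds (by the adaptation of \cite{BKLT} used in the proof of Proposition \ref{existence}, now even simpler thanks to the discount term $\lambda\psi_\lambda$); together with Perron's method, using the constant sub/supersolutions $\mp\lambda^{-1}(\|a(x,\cdot)l\|_\infty + \|H(x,\cdot,p)\|_\infty)$, this gives a unique bounded continuous viscosity solution $\psi_\lambda$, and by comparison $\|\lambda\psi_\lambda\|_{L^\infty(\T^N)}\leq M_0$ with $M_0\leq C(1+|l|+|p|^m)$ by \eqref{grow}.

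The core step is the regularity of $\tilde\psi_\lambda:=\psi_\lambda-\min_{\T^N}\psi_\lambda$, uniformly in $\lambda\in(0,1)$. Here $\sigma=1$ is critical, since $(-\Delta)^{1/2}$ has the same order as the gradient; the superlinear coercivity \eqref{strongcond}, i.e. $H(x,y,q)\geq C^{-1}|q|^m-K$ with $m>1$, is what dominates the order-one terms where $|D\psi_\lambda|$ is large. Adapting the integro-differential H\"older estimates of \cite{BKLT} (as in the proof of Theorem \ref{teoremac}), the Lipschitz/Bernstein-type bounds of \cite{BLT} and the regularity theory of \cite{Silvestre}, and using the a priori estimates of the Appendix (Section \ref{appendix}), one first gets a uniform $C^\alpha$ bound on $\tilde\psi_\lambda$, then upgrades it to a gradient bound, and finally to a $C^{1,\alpha}$ bound via the Schauder-type estimate for $(-\Delta)^{1/2}$ once the right-hand side of the equation is controlled. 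The quantitative outputs I need are $\|D\psi_\lambda\|_{L^\infty(\T^N)}\leq C(1+|l|+|p|^m)$ with $C$ independent of $x,l,p$, together with a uniform $C^{1,\alpha}$ bound depending only on $|l|,|p|$; assumption \eqref{A} (and \eqref{extracond1} when $K^1$ is not symmetric, which makes $b$ in \eqref{J1} well defined and finite) is used precisely to run these estimates.

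Next I would let $\lambda\to0$. Since $\|\tilde\psi_\lambda\|_\infty$ is uniformly bounded, $\osc(\lambda\psi_\lambda)=\lambda\,\osc(\psi_\lambda)\to0$ while $\|\lambda\psi_\lambda\|_\infty\leq M_0$, so along a subsequence $\lambda\psi_\lambda\to -c$ uniformly for a constant $c$ with $|c|\leq M_0$, and $\tilde\psi_\lambda\to\psi$ in $C^1(\T^N)$ by Arzel\`a--Ascoli, with $\psi\in C^{1,\alpha}(\T^N)$. By stability of viscosity solutions under uniform convergence (see \cite{BI}), $(\psi,c)$ solves \eqref{cell=1}, and since $\psi\in C^{1,\alpha}$ with $1+\alpha>1=\sigma$ it is a classical solution. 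Setting $\bar H(x,p,l):=c$ and writing the equation as $a(x,y)(-\Delta)^{1/2}\psi = c + a(x,y)l - a(x,y)\langle b,D\psi\rangle - H(x,y,p+D\psi)$, the bounds $|c|\leq C(1+|l|+|p|^m)$, $\|D\psi\|_\infty\leq C(1+|l|+|p|^m)$ and the growth \eqref{grow} give $|(-\Delta)^{1/2}\psi|_{L^\infty(\T^N)}\leq C(1+|l|+|p|^m)^m$, which is \eqref{C1alpha}. For uniqueness, if $(\psi_1,c_1)$ and $(\psi_2,c_2)$ both solve \eqref{cell=1} (classically, by the above), let $\bar y$ maximize $\psi_1-\psi_2$ on $\T^N$; then $D\psi_1(\bar y)=D\psi_2(\bar y)$, so the gradient, drift and Hamiltonian contributions cancel when subtracting the equations at $\bar y$, while $(-\Delta)^{1/2}(\psi_1-\psi_2)(\bar y)\geq0$ because the integrand is non-positive at an interior maximum (the compensator vanishes since $D(\psi_1-\psi_2)(\bar y)=0$). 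Hence $c_1-c_2=a(x,\bar y)(-\Delta)^{1/2}(\psi_1-\psi_2)(\bar y)\geq0$, and by symmetry $c_1=c_2=:c$; then $(-\Delta)^{1/2}(\psi_1-\psi_2)(\bar y)=0$, i.e. $\int_{\R^N}[(\psi_1-\psi_2)(\bar y)-(\psi_1-\psi_2)(\bar y+z)]|z|^{-N-1}dz=0$ with non-negative integrand, which forces $\psi_1-\psi_2\equiv(\psi_1-\psi_2)(\bar y)$ by continuity. Thus $c=\bar H(x,p,l)$ is well defined (independent of the approximating subsequence) and $\psi$ is unique up to an additive constant.

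The hard part will be the uniform $C^{1,\alpha}$ regularity of the $\psi_\lambda$, with explicit dependence on $|l|$ and $|p|$, in the critical regime $\sigma=1$: there the nonlocal diffusion competes at the same order with the gradient term, the superlinearity $m>1$ only breaks the tie at large gradients, and making the estimates quantitative requires carefully combining the results of \cite{BKLT}, \cite{BLT}, \cite{Silvestre} and the a priori estimates of the Appendix.
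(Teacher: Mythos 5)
Your proposal follows essentially the same strategy as the paper: a discounted approximation (your $\psi_\lambda$, the paper's $\psi^\delta$), uniform $L^\infty$ bound on $\lambda\psi_\lambda$ from comparison, equi-H\"older estimates via~\cite[Theorem 2.2]{BKLT} (Lemma~\ref{cotaHolder+}) upgraded to the Lipschitz bound~\eqref{Lipbound} via~\cite[Theorem 3.1]{BLT} (Lemma~\ref{lemaLip}), passage to the limit, $C^{1,\alpha}$ regularity via the translation-quotient argument and~\cite[Theorem 6.1]{Silvestre}, the estimate~\eqref{C1alpha} by rearranging the equation, and finally a strong maximum principle argument for uniqueness. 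The only minor difference is in the ordering: the paper passes to the limit at the Lipschitz level and then bootstraps the \emph{limit} $\psi$ to $C^{1,\alpha}$ by linearizing with $v_e = (\psi(\cdot+e)-\psi)/|e|$, while you claim uniform $C^{1,\alpha}$ bounds on $\psi_\lambda$ before taking $\lambda\to 0$; both orderings work, but the paper's avoids the need for $\lambda$-uniform $C^{1,\alpha}$ estimates.
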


%In the borderline case $\sigma =1$ the cell problem associated to the expansion given by~\eqref{eqeps} has the form

%As we mentioned in the introduction, the main particularity of this problem is the presence of both local and nonlocal differential operators. This leads to a more involved analysis at every level of the problem for which we require to show in a more detailed way the different elements of the homogenization procedure.
%
%\medskip
%
%We start with the solvability of the cell problem~\eqref{cellprob1}.

\noindent
{\bf \textit{Proof:}} We concentrate on the case $b \neq 0$.  %, then so 
Given $x, p \in \R^N$ and $l \in \R$, and for each $\delta \in (0,1)$ we consider the solution $\psi = \psi^\delta(y)$ for the approximating  problem
\begin{equation}\label{eqaprox}
\delta \psi - a(x,y) l + a(x,y) [(-\Delta)^{1/2} \psi - \langle b, D\psi \rangle ]+ H(x, y, p + D\psi) = 0, \quad y \in \T^N.
\end{equation}

The proper term $\delta \psi$ implies the existence and uniqueness of a solution $\psi^\delta$ to this problem, and the following estimate holds
\begin{equation*}
|\psi^\delta|_\infty \leq \delta^{-1} \Big{(} |a|_\infty |l|  + |H(\cdot, \cdot, p)|_\infty \Big{)},
\end{equation*}
and in view of~\eqref{A} and~\eqref{strongcond} we have the existence of a constant $C_1 > 0$ such that
\begin{equation}\label{psi1}
|\psi^\delta|_\infty \leq C_1 \delta^{-1} ( 1 +| l| + |p|^m).
\end{equation}

Then, in view of~\eqref{strongcond} and the fact that $m > 1$, it is direct to see that $\psi^\delta$ satisfies, in the viscosity sense, the inequality
\begin{equation*}
(-\Delta)^{1/2} \psi^\delta + c |D\psi^\delta|^m \leq C(1 +| l| + |p|^m) \quad \mbox{in} \ \T^N, 
\end{equation*}
from which, by applying Theorem 2.2 in~\cite{BKLT}, we get that {   $\psi^\delta$ is H\"older continuous for each exponent $\gamma \in (0,1)$.
More precisely, a careful analysis of the proof shows that there exists a constant $C_\gamma > 0$ such that
\begin{equation}
\label{cotaHolder}
|\psi^\delta(y) - \psi^\delta(y')| \leq C_\gamma(1 + \osc(\psi^\delta)^{1/m} + (|p|^m +|l|)^{1/m})|y - y'|^\gamma \quad y, y' \in \T^N.
\end{equation}
A sketch of the proof of this estimate is provided in the Appendix, Lemma~\ref{cotaHolder+}.}

From this we deduce the existence of a constant $C > 0$ such that
\begin{equation}\label{oscbound}
\osc_{\T^N} (\psi^\delta) \leq C (1 + |p|+|l|^{1/m}).
\end{equation}

%\begin{equation*}
%|\psi^\delta(y) - \psi^\delta(y')| \leq C_\gamma(1 + |p| +|l|^{1/m})|y - y'|^\gamma \quad y, y' \in \T^N.
%\end{equation*}
%
%From this we deduce the oscillation bound
%\begin{equation}\label{oscbound}
%\osc_{\T^N} (\psi^\delta) \leq C (1 + |p|+|l|^{1/m}) N^{\gamma/2}.
%\end{equation}

At this point we claim that under the assumptions of the proposition together with~\eqref{psi1} and~\eqref{oscbound} we get the Lipschitz bound
\begin{equation}\label{Lipbound} 
|\psi^\delta(y) - \psi^\delta({   y'})| \leq C (1 +| l|  + |p|^m) |y - {   y'}|,
\end{equation}
for some $C > 0$ not depending on $\delta$, $x, p$ or $l$. This claim is a consequence of Theorem 3.1 in~\cite{BLT}, but we provide a proof in the appendix (Lemma~\ref{lemaLip}) for completeness.

The application of the above boundedness/regularity results in the periodic setting leads us to the solvability of the cell problem~\eqref{cell=1} by stability results of viscosity solutions by taking $\delta \to 0$. The ergodic constant is characterized as the uniform limit $\lambda = -\lim_{\delta \to 0} \delta \psi^\delta$. The uniqueness properties of the cell problem are achieved as in~\cite{Evans1} by comparison principle and strong maximum principle provided in~\cite{BKLT}. %, basically obtained by the form of the nonlocal operator. 

We devote the rest of the proof to get the $C^{1,\alpha}$ regularity. This is a consequence of a ``linearization" argument which is possible by the Lipschitz estimates given by~\eqref{Lipbound}. In fact, for a fixed $e \in \R^N$ with $|e| > 0$ we define the function 
$$
v_e(y) = (\psi(y + e) - \psi(y))/|e|.
$$

Notice that by~\eqref{Lipbound} this function $v_e$ is bounded, with 
\begin{equation}\label{ve}
|v_e|_\infty \leq C (1 + l + |p|^m).
\end{equation}

In what follows we derive an equation solved by %for
 $v_e$. Using~\eqref{Lipbound} together with~\eqref{A} we get the existence of $C > 0$ such that
\begin{align*}
& |a^{-1}(x, y + e) H(x,y + e, p + D\psi(y + e)) - a^{-1}(x,y) H(x, y, p + D\psi(y))| \\ \leq & \ C(1 + l + |p|^{m})^m|e| + C(1 + l + |p|^m)^{m - 1} |D\psi_e(y)|,
\end{align*}
where {   $a^{-1}(x,y)%$ denotes the inverse of the function $y\to 
=1/a(x,y)$ } %for $x$ fixed
 and $\psi_e(y) = \psi(y + e) - \psi(y)$.

Using this estimate, the linearity of the fractional Laplacian, the assumptions on the data, and the uniform bounds on $v_e$, we conclude that $v_e$ satisfies, in the viscosity sense
\begin{equation*}
\begin{split}
(-\Delta)^{1/2} v_e - A(p, l) |Dv_e| & \leq C(p, l), \\
(-\Delta)^{1/2} v_e + A(p, l) |Dv_e| & \geq -C(p, l).
\end{split}
\end{equation*}
for some $A(p,l), C(p, l) > 0$ depending on the %asserted
 parameters $p, l$ and the data, but not on $e$. 
From here, we use Theorem 6.1 in~\cite{Silvestre} (stated for parabolic problems, but easily adapted to the stationary case), or the Appendix in~\cite{DQT3}, to conclude the existence of $\alpha > 0$ (small, depending on the data and $A(p,l), C(p,l)$ but not on $e$) such that $v_e \in C^\alpha$.
%\begin{equation}\label{C1alphabound}
%|v_e|_{C^{\alpha}} \leq C C(p,l),
%\end{equation}
%for some $C > 0$. 
This concludes the $C^{1, \alpha}$ regularity for the solution of $\psi$. 

Finally, we notice that the Lipschitz bound~\eqref{Lipbound} is inherited by %to
 $\psi$ via uniform convergence. We use this into the pointwise inequality
\begin{equation*}
|(-\Delta)^{1/2}\psi(y)| \leq \lambda + C(1 + |l| + |D\psi(y)| + |H_x(y, p + D\psi)|),
\end{equation*}
which leads to~\eqref{C1alpha} using~\eqref{strongcond} and~\eqref{psi1}. This concludes the proof.
%we can evaluate classically in~\eqref{eqaprox} the viscosity solution $\psi^\delta$. In fact, a direct computation drives us to
%\begin{equation*}
%(-\Delta)^{1/2} \psi \leq \delta \tilde a |\psi| + A_0 |l| + |\tilde H(\cdot, p + D\psi)|_{L^\infty(\T^N)},
%\end{equation*} 
%and in view of the above estimates and the assumptions of $H$ we conclude that 
%and using~\eqref{A} and~\eqref{strongcond} we get
%\begin{equation*}
%(-\Delta)^{1/2} \psi \leq C (1 + \delta |\psi| + |l| + |p + D\psi|^m),
%\end{equation*}
%for some $C$ just depending on the data. At this point, recalling~\eqref{psi1} and~\eqref{Lipbound} we arrive at
%\begin{equation*}
%(-\Delta)^{1/2} \psi \leq C (1 + |l| + |p|^m)^m.
%\end{equation*}
%Since lower bounds can be obtained in the same way, the estimate~\eqref{C1alpha} follows. 
\qed

\medskip

Now we present some properties of the effective Hamiltonian. The proof is a straightforward adaptation to the corresponding effective properties given in~\cite{Evans1}.
\begin{lema}\label{lemaF1}
Let $\bar H$ be the effective Hamiltonian associated to~\eqref{cell=1}. Then
%The effective Hamiltonian $\bar H$ satisfies 
\begin{itemize}
\item[$(i)$] There exists $C > 0$ just depending on the data such that
\begin{equation*}
\begin{split}
|\bar H(x_1, p_1, l_1) - \bar H(x_2, p_2, l_2)|
\leq \ C \Big{(} &  |l_1 - l_2| + |x_1 - x_2| ( 1 + |l| + |p|^m )^m \\
& + |p_1 - p_2| (1 +| l|+ |p|^m)^{m - 1}  \Big{)},
\end{split}
\end{equation*}
where $|p| = \max \{ |p_1|, |p_2|\}$, $|l| = \max \{ |l_1|, |l_2|\}$.% and $\omega$ is the modulus of continuity in~\eqref{H2}.

\item[$(ii)$] There exists $b_0 , C > 0$ such that  for all $x, p \in \R^N, l \in \R$
$$
\bar H(x, p, l) \geq b_0 |p|^m - |a|_\infty |l| - C.
$$

\item[$(iii)$] For all $x, p \in \R^N$, the function $l \mapsto \bar H(x, p, l)$ is decreasing.
\end{itemize}
\end{lema}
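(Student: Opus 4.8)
The plan is to obtain the three properties from the approximating cell problem~\eqref{eqaprox} together with the characterisation $\bar H(x,p,l) = -\lim_{\delta\to 0^+}\delta\psi^\delta$, where $\psi^\delta=\psi^\delta(x,p,l;\cdot)$ denotes its unique solution. The key quantitative input is that the a priori estimates established in the proof of Proposition~\ref{propcell1} hold \emph{uniformly in $\delta\in(0,1)$}: namely $\|D\psi^\delta\|_\infty \leq C(1+|l|+|p|^m)$ from~\eqref{Lipbound}, and $\|(-\Delta)^{1/2}\psi^\delta\|_{L^\infty(\T^N)} \leq C(1+|l|+|p|^m)^m$ from the pointwise inequality at the end of that proof (with $\delta\|\psi^\delta\|_\infty$, bounded by $C_1(1+|l|+|p|^m)$ via~\eqref{psi1}, playing the role of the ergodic constant), all constants depending only on the data. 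This is legitimate because every estimate there ($\osc$, Lipschitz, $(-\Delta)^{1/2}$) is obtained with a $\delta$-independent constant.

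For $(iii)$, fix $x,p$ and let $l_1<l_2$. Since $a(x,\cdot)\geq a_0>0$ by~\eqref{aK}, the function $\psi^\delta(x,p,l_2;\cdot)$ satisfies~\eqref{eqaprox} with parameter $l_1$ and right-hand side $a(x,y)(l_2-l_1)>0$, hence is a supersolution of the $\delta$-problem with parameter $l_1$; the comparison principle for~\eqref{eqaprox} (available because of the proper term $\delta\psi$) gives $\psi^\delta(x,p,l_2;\cdot)\geq\psi^\delta(x,p,l_1;\cdot)$, and multiplying by $-\delta$ and letting $\delta\to0$ we get $\bar H(x,p,l_1)\geq\bar H(x,p,l_2)$. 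For $(ii)$, let $y_0$ be a maximum point of the cell solution $\psi$ on $\T^N$ and test the subsolution inequality for~\eqref{cell=1} with the constant function $\phi\equiv\psi(y_0)$: then $D\phi(y_0)=0$ kills the drift, the $(-\Delta)^{1/2}$-term contributes nonnegatively to the left-hand side, and what remains gives $-a(x,y_0)l + H(x,y_0,p)\leq\bar H(x,p,l)$; combining with~\eqref{strongcond} and $a(x,y_0)\leq|a|_\infty$ yields $\bar H(x,p,l)\geq C^{-1}(1+|p|^m)-K-|a|_\infty|l|$, which is $(ii)$ with $b_0=C^{-1}$.

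Part $(i)$ is the substantive one. I would plug $\psi^\delta_1:=\psi^\delta(x_1,p_1,l_1;\cdot)$ into the operator of the $\delta$-problem~\eqref{eqaprox} attached to the data $(x_2,p_2,l_2)$ and subtract the equation it actually solves. The discrepancy $E(y)$ splits into $[a(x_1,y)-a(x_2,y)]l_1$, $a(x_2,y)(l_1-l_2)$, $[a(x_1,y)-a(x_2,y)]\big((-\Delta)^{1/2}\psi^\delta_1-\langle b,D\psi^\delta_1\rangle\big)$, and $H(x_2,y,p_2+D\psi^\delta_1)-H(x_1,y,p_1+D\psi^\delta_1)$. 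Estimating these by~\eqref{A} (Lipschitz continuity of $a$ and of $H$ in $X$ and in $p$), by the uniform bounds above, and using $R:=|p_1+D\psi^\delta_1|\vee|p_2+D\psi^\delta_1|\leq C(1+|l|+|p|^m)$, so that $R^m\leq C(1+|l|+|p|^m)^m$ and $R^{m-1}\leq C(1+|l|+|p|^m)^{m-1}$, one gets
\[
|E(y)| \leq C\Big(|l_1-l_2| + |x_1-x_2|(1+|l|+|p|^m)^m + |p_1-p_2|(1+|l|+|p|^m)^{m-1}\Big) =: \Lambda ,
\]
with $C$ depending only on the data. Since adding a constant $c$ to a function changes the $\delta$-operator exactly by $\delta c$, the function $\psi^\delta(x_2,p_2,l_2;\cdot)+\Lambda/\delta$ solves the $(x_2,p_2,l_2)$-problem with right-hand side $\Lambda$ while $\psi^\delta_1$ is a subsolution of it, so by comparison $\psi^\delta_1\leq\psi^\delta(x_2,p_2,l_2;\cdot)+\Lambda/\delta$, and symmetrically the reverse bound; hence $|\delta\psi^\delta_1-\delta\psi^\delta(x_2,p_2,l_2;\cdot)|\leq\Lambda$, and letting $\delta\to0$ gives $(i)$, in fact with a linear modulus, matching the constant $C$ there.

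The main obstacle is not conceptual but the bookkeeping in $(i)$: one must verify that each of the four pieces of $E$ is dominated by $\Lambda$ with the correct powers $m$ and $m-1$ of $1+|l|+|p|^m$ — the power $m$ coming from $|x_1-x_2|$ multiplying both $(-\Delta)^{1/2}\psi^\delta_1$ and the bound $L(1+R^m)$ for the $X$-variation of $H$, the power $m-1$ from $|p_1-p_2|$ multiplying $L(1+R^{m-1})$ — whereas the contributions $[a(x_1,\cdot)-a(x_2,\cdot)]l_1$, $[a(x_1,\cdot)-a(x_2,\cdot)]\langle b,D\psi^\delta_1\rangle$ and $a(x_2,\cdot)(l_1-l_2)$ are controlled trivially; a secondary point is the already noted fact that the regularity estimates of Proposition~\ref{propcell1} are uniform in the regularising parameter $\delta$.
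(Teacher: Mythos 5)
Your proof is correct and follows essentially the same route as the paper's: all three parts rest on the vanishing-discount approximation $\psi^\delta$, the comparison principle for the $\delta$-problem, and the $\delta$-uniform a priori estimates \eqref{psi1}, \eqref{Lipbound} (and the induced bound on $(-\Delta)^{1/2}\psi^\delta$), exactly as the paper does for $(i)$ and $(ii)$. Your $(iii)$ is a mild streamlining — you obtain monotonicity directly by comparison in the $\delta$-problem rather than the paper's contradiction argument on the cell problem — and you are a bit more explicit than the paper in carrying the drift term $\langle b, D\psi^\delta\rangle$ and in noting that the bound \eqref{C1alpha} holds uniformly in $\delta$ for $\psi^\delta$, but none of this changes the substance.
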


\medskip
\noindent
{\bf \textit{Proof:}} %We just present the proof of $(i)$. The proof of $(ii)$ and $(iii)$ can be easily adapted from the analogous results presented in Lemma~\ref{propF0}.
$(i)$ Let $x_1, x_2, p_1, p_2 \in \R^N$ and $l_1, l_2 \in \R$ and for $\delta > 0$ and $i=1,2$ consider the approximating problems
\begin{equation*}
\delta \psi_i - a_i(y) l_i + a_i(y)(-\Delta)^{1/2} \psi_i + H_i(y, p_i + D\psi_i) = 0 \quad \mbox{in} \ \T^N,
\end{equation*}
where, with a slight abuse of notation we have written $a_i (y)= a(x_i,y)$ and $H_i(y, p_i + D\psi_i) = H(x_i,y, p_i + D\psi_i)$.
Then, %denoting $l_{max} = \max \{ |l_1|, |l_2| \}$ and $p_{max} = \max \{ |p_1|, |p_2| \}$, 
we use the equation solved by $\psi_2$ and assumptions~\eqref{A} and~\eqref{H1} to write
\begin{equation*}
\begin{split}
& \delta \psi_2 - a_{1} (-\Delta)^{1/2} \psi_2 + H_{1}(y, p_1 + D\psi_2) \\
\leq & C|l_1 - l_2| + C|x_1 - x_2| \Big{(} 1 + |l| + |(-\Delta)^{1/2}\psi|_{\infty} + L_H (|p|^m + |D\psi_2|^m_{\infty}) \Big{)}\\ & + L_H|p_1 - p_2| (1 + |p|^{m - 1} + |D\psi_2|^{m - 1}_{\infty}),
\end{split}
\end{equation*}
and from this, using the Lipschitz bound~\eqref{Lipbound} and the fractional estimate~\eqref{C1alpha} we arrive at
\begin{equation}\label{ineq!}
\begin{split}
& \delta \psi_2 - a_{1} (-\Delta)^{1/2} \psi_2 + H_{1}(y, p_1 + D\psi_2) \\
\leq & C \Big{(}|l_1 - l_2| + |x_1 - x_2| ( 1 +| l| + |p|^m)^m \\
& + L_H|p_1 - p_2| ( 1 +| l| + |p|^{m})^{m - 1} \Big{)},
\end{split}
\end{equation}
for some $C > 0$ just depending on the data.

%Using the $C^{1, \alpha}$ and Lipschitz estimates for the approximating problem (c.f.~\eqref{Lipbound} {  and~\eqref{C1alphabound}}) we conclude that
%\begin{equation}
%\begin{split}
%& \delta \psi_2 - a_{x_1} (-\Delta)^{1/2} \psi_2 + H_{x_1}(y, p_1 + D\psi_2) \\
%\geq &  -C|l_1 - l_2| -{  C|x_1 - x_2| \Big{(} l_{max} + p_{max}^m \Big{)}} - \omega(|p_1 - p_2|) (1 + p_{max}^{m - 1}),
%\end{split}
%\end{equation}
%since $C_0$ in~\eqref{C1alphabound} is bounded proportionally to $p_{max}^m + l_{max}$.

From here, by comparison it is possible to get that
\begin{equation*}
\begin{split}
\delta (\psi_2^\delta - \psi_1^\delta) \leq & C \Big{(} |l_1 - l_2| + |x_1 - x_2| ( 1 +| l| +| p|^m )^m \\ & + |p_1 - p_2| (1 + |l| + |p|^m)^{m - 1} \Big{)},
\end{split}
\end{equation*}
and a similar lower bound can be obtained. Letting $\delta \to 0^+$ and recalling the definition of $\bar H$ we conclude the result.

\medskip
\noindent
$(ii)$ We consider $\delta > 0$ and the approximating problem~\eqref{eqaprox}. Then, we consider $y_0 \in \T^N$ a maximum point to $\psi^\delta$ and using a constant function as a test function to $\psi^\delta$ at $y_0$ we can write
\begin{equation*}
\delta \psi^\delta(y_0) - a(x, y_0) l + H(x, y_0, p) \leq 0,
\end{equation*}
and using the boundedness of $a$ and coercivity of $H$ we get that
\begin{equation*}
-C - |a|_\infty |l| + b_0 |p|^m \leq -\delta \psi^\delta(y_0),
\end{equation*}
for some $C, b_0$ depending on $H$. Thus, recalling that $\delta \psi^\delta(y_0) \to -\bar H(x, p, l)$ as $\delta \to 0^+$, we conclude the result taking the limit in the right-side of the last inequality.

\medskip
\noindent
$(iii)$ We fix $x, p$, consider $l_1 < l_2$ and assume by contradiction that
\begin{equation}\label{hola1}
\bar H(x, p, l_1) < \bar H(x, p, l_2).
\end{equation}

For $i=1,2$, let $\psi_i$ solution to the cell problem
\begin{equation*}
- a(x,y) l_i + a(x,y)(-\Delta)^{1/2} \psi_i + H(x,y, p + D\psi_i) = \bar H(x, p, l_i), \quad y \in \T^N.
\end{equation*}

We can assume without loss of generality that $\psi_2 < \psi_1$. 

Next we claim that $\psi_2$ satisfies the inequality 
\begin{equation}\label{hola2}
- a(x,y) l_1 + a(x,y) (-\Delta)^{1/2}\psi_2 + H(x, y, p + D\psi_2) > \bar H(x, p, l_1)
\end{equation}
in the viscosity sense. For this, we take $y_0 \in \T^N$ and consider $\phi$ bounded and smooth such that $y_0$ is a minimum point for $\psi_2 - \phi$ in $\T^N$. Then, using the equation solved by $\psi_2$ we get
\begin{equation*}
- a(x,y_0)l_2 + a(x, y_0)(-\Delta)^{1/2} \phi(y_0) + H(x,y_0, p + D\phi(y_0)) \geq \bar H(x, p, l_2).
\end{equation*}

Then, using~\eqref{hola1}, that $l_2 > l_1$ and the nonnegativeness of $a$ we arrive at
\begin{equation*}
- a(x, y_0)l_1 + a(x, y_0)(-\Delta)^{1/2} \phi(y_0) + H(x, y_0, p + D\phi(y_0)) > \bar H(x, p, l_1),
\end{equation*}
from which the claim follows. The strict inequality in~\eqref{hola2} allows us to compare to get $\psi_2 \geq \psi_1$, which contradicts the assumed reverse inequality. This concludes the proof.
\qed

\medskip

At this point we present the main result of this section%homogenization result Finally, we present the main result of this section which is the homogenization result for~\eqref{eq}  in the case $\sigma = 1$.
\begin{teo}[Homogenization]
\label{teohomo1}
Under the assumptions of 
%Assume $\sigma = 1$, $K$ in~\eqref{operator} is symmetric satisfying condition~\eqref{kernel}, and that the assumptions of
 Proposition~\ref{propcell1} {and for $u_0\in BUC(\R^N)$}, %Let $u^\epsilon$ be the unique viscosity solution to~\eqref{eq}. 
 the family of solutions $u^\epsilon$ of~\eqref{eq}-\eqref{initial} converges {locally} uniformly to %the unique
{a} viscosity solution $u$ of the associated effective problem~\eqref{eqeffective} with $\bar H$ given in Proposition~\ref{propcell1}. {Moreover  $u$ is the unique %continuous 
 solution of \eqref{eqeffective} attaining uniformly continuously the initial data $u_0$.}
\end{teo}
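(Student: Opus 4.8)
The plan is the standard one for this type of problem: pass to the half-relaxed limits of $\{u^\epsilon\}_\epsilon$, show they are a sub- and a supersolution of the effective equation via a perturbed test function argument based on the corrector of Proposition~\ref{propcell1}, and identify them using the comparison principle of Theorem~\ref{teoremac}. By Proposition~\ref{existence} we have $|u^\epsilon|_{L^\infty(Q_T)}\le |u_0|_\infty+|H(\cdot,\cdot,0)|_\infty T$ for every $\epsilon>0$, so the upper and lower half-relaxed limits $\bar u$ and $\underline u$ of $\{u^\epsilon\}$ are well defined and bounded on $\bar Q_T$, with $\bar u$ u.s.c., $\underline u$ l.s.c., and $\underline u\le\bar u$. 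Since the initial-layer estimate \eqref{initial1} is uniform in $\epsilon$, passing to the semilimits and using the continuity of $u_0$ and of $\bar\omega$ yields $u_0(x)-\bar\omega(t)\le\underline u(x,t)\le\bar u(x,t)\le u_0(x)+\bar\omega(t)$ for all $(x,t)\in\bar Q_T$; in particular $\bar u$ and $\underline u$ both attain $u_0$ uniformly continuously in the sense of \eqref{aucid} (with $\omega_0=\bar\omega$), and $\bar u(\cdot,0)=\underline u(\cdot,0)=u_0$.

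The core step is the perturbed test function method of Evans adapted to the nonlocal setting; I sketch the subsolution property of $\bar u$, the supersolution property of $\underline u$ being symmetric. Given $\phi\in C^2$ such that $\bar u-\phi$ has a strict local maximum at $(x_0,t_0)$, $t_0>0$, I freeze the parameters $x_0$, $p=D\phi(x_0,t_0)$ and the nonlocal value $l$ entering the effective inequality we want to prove, take the corrector $\psi\in C^{1,\alpha}(\T^N)$ given by Proposition~\ref{propcell1} for these parameters, with ergodic constant $\bar H(x_0,p,l)$, and form the perturbed test function $\phi^\epsilon(x,t)=\phi(x,t)+\epsilon\,\psi(x/\epsilon)$ (note $1\vee\sigma=1$). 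Along a subsequence, $u^\epsilon-\phi^\epsilon$ has a local maximum at $(x_\epsilon,t_\epsilon)\to(x_0,t_0)$ with $u^\epsilon(x_\epsilon,t_\epsilon)\to\bar u(x_0,t_0)$; writing the subsolution inequality for $u^\epsilon$ at $(x_\epsilon,t_\epsilon)$, using $D\phi^\epsilon=D\phi+D\psi(\cdot/\epsilon)$ and substituting the asymptotics \eqref{split1}--\eqref{estJ} for $\epsilon\,\I(\psi_\epsilon,\cdot)$ --- precisely where the $C^{1+\alpha}$ regularity of $\psi$ and, when $K^1$ is not symmetric, assumption \eqref{extracond1} enter, producing the drift $\langle b,D\psi\rangle$ --- the cell equation \eqref{cell=1} cancels all the corrector contributions. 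Controlling the tail of $\I$ by $|u^\epsilon|_\infty$ and the integrability of $K^\sigma$ off the origin, letting $\epsilon\to 0$ (and then the localization radius of the nonlocal split to $0$), and using the continuity of $\bar H$ from Lemma~\ref{lemaF1}$(i)$, one obtains $\phi_t(x_0,t_0)+\bar H\big(x_0,D\phi(x_0,t_0),\I(\phi,x_0)\big)\le 0$; the endpoint $t_0=T$ is handled in the usual way.

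Finally I would invoke comparison. By Lemma~\ref{lemaF1} the effective Hamiltonian $\bar H$ satisfies the structure condition \eqref{assF} (with a linear $\omega$ and $n=m$), it is degenerate elliptic in the nonlocal variable (part $(iii)$) and superlinear in the gradient \eqref{superlin} (part $(ii)$). Since $\bar u$ is a bounded u.s.c. subsolution of \eqref{cau} with $F=\bar H$ that attains $u_0$ uniformly continuously and $\underline u$ a bounded l.s.c. supersolution, Theorem~\ref{teoremac} gives $\bar u\le\underline u$ on $\bar Q_T$; together with $\underline u\le\bar u$ this forces $\bar u=\underline u=:u\in C(\bar Q_T)$, which is then a viscosity solution of \eqref{eqeffective} satisfying \eqref{aucid}. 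By the standard property that coinciding continuous half-relaxed limits force locally uniform convergence, $u^\epsilon\to u$ locally uniformly on $\bar Q_T$, and uniqueness of such a solution is exactly the last assertion of Theorem~\ref{teoremac}. I expect the perturbed test function step to be the main obstacle: reconciling the localized/tail splitting of $\I$ in the definition of viscosity solution with a corrector whose argument is frozen at $(x_0,p,l)$, and absorbing the non-symmetric drift term from \eqref{estJ}, require the fine $C^{1,\alpha}$ estimates of Proposition~\ref{propcell1} and careful bookkeeping of the $o_\epsilon(1)$ and $o_\delta(1)$ errors.
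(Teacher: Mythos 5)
Your proposal follows exactly the strategy of the paper's proof: uniform bounds and the initial-layer estimate \eqref{initial1} from Proposition~\ref{existence} to get half-relaxed limits satisfying \eqref{aucid}, the perturbed test function method with the $C^{1,\alpha}$ corrector of Proposition~\ref{propcell1} and the asymptotics \eqref{split1}--\eqref{estJ}, the structure bounds of Lemma~\ref{lemaF1} to verify \eqref{assF}, \eqref{superlin} and degenerate ellipticity, and Theorem~\ref{teoremac} to close. The only detail to tighten is the handling of the nonlocal split: the paper takes the maximum of $u^\epsilon-\phi^\epsilon$ on $B_{R_\epsilon}(x_\epsilon)\times[0,T]$ with $R_\epsilon\to\infty$, so that $\I[B_{R_\epsilon}](\phi,x_\epsilon)\to\I(\phi,x_0)$ and the tail on $B_{R_\epsilon}^c$ is estimated via $|u^\epsilon|_\infty$ — rather than sending a small localization radius to zero after $\epsilon$; but the two bookkeepings are equivalent.
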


\noindent
{\bf \textit{Proof:}} Recalling Proposition~\ref{existence}, we see that the family of functions $\{ u^\epsilon \}_\epsilon$ is uniformly bounded in $\bar Q_T$. Then, by half-relaxed limits as in~\cite{BP1} we see that the functions $\bar u = \limsup^*_\epsilon u^\epsilon$ and $\underline u = \liminf_\epsilon^* u^\epsilon$ are respective viscosity sub and supersolution to the effective problem.

To see this we argue over $\bar u$, a similar treatment can be done for $\underline u$. 
Let $(x_0, t_0) \in Q_T$ and $\phi$ be a smooth function such that $(x_0, t_0)$ is a strict global maximum point to $\bar u - \phi$. Then, for $x=x_0$, $p = D\phi(x_0)$ and $l = \I(\phi, x_0)$ let $\psi$ be a solution to~\eqref{cell=1}. In view of Proposition~\ref{propcell1} we can assume $\psi \in C^{1, \alpha}$. 

By the strict maximality of $x_0$ , the fact that $u^\epsilon \to \bar u$ locally uniformly in $\R^N$ and the boundedness of $\psi$, there exists a sequence 
$(x_\epsilon, t_\epsilon) \to (x_0, t_0)$, maximum point to $(x,t) \mapsto u^\epsilon(x,t) - (\phi(x,t) + \epsilon \psi(x/\epsilon))$ in the set $B_{R_\epsilon}(x_\epsilon) \times [0,T]$, with $R_\epsilon \to +\infty$ as $\epsilon \to 0$. 
%We consider a $C^{1, \alpha}$ function $\psi^\epsilon$ such that $\psi^\epsilon(0)=0$ and which solves the cell problem~\eqref{cell=1} associated to 
%$$
% c_\epsilon =\bar H(x_\epsilon, D\phi(x_\epsilon, t_\epsilon), \I(\phi(\cdot, t_\epsilon), x_\epsilon)) =: \bar H (x_\epsilon, p_\epsilon, l_\epsilon).
%$$ 
%
%From Lemma~\ref{lemaF1} $(i)$ we see that the family $\{ c_\epsilon \}$ is uniformly bounded, and therefore, by the H\"older estimates provided in~\cite{BKLT} we conclude that $\{ \psi^\epsilon\}_\epsilon$ are equi H\"older continuous. Since $\psi^\epsilon(0)=0$, we get that  $\{ \psi^\epsilon \}$ is also uniformly bounded. From here we use the arguments leading to $C^{1, \alpha}$ estimates in Proposition~\ref{propcell1} to conclude that $\{ \psi^\epsilon \}$ has uniform $C^{1, \alpha}$ estimates. This implies that up to a subsequence $\psi^\epsilon\to \psi$ uniformly in $C^1$. 

Then, we can use $\phi_\epsilon(x,t)=\phi(x,t)+\epsilon \psi(x/\epsilon)$ as test function for $u^\epsilon$ at $(x_\epsilon, t_\epsilon)$ and denoting $y_\epsilon = x_\epsilon/\epsilon$ we can write
\begin{equation}\label{stgo}
\phi_t(x_\epsilon, t_\epsilon) - a(x_\epsilon, y_\epsilon)\I[B_{R_\epsilon}](\phi_\epsilon, x_\epsilon) - a(x_\epsilon, y_\epsilon)\I[B_{R_\epsilon}^c](u^\epsilon, x_\epsilon) + H(x_\epsilon, y_\epsilon, D\phi_\epsilon(x_\epsilon, t_\epsilon))\leq 0,
\end{equation}
where we have also used the notation introduced before.
By the boundedness and smoothness of $\phi$ and since $R_\epsilon \to \infty$ as $\epsilon \to 0$ we see that
\begin{equation*}
\phi_t(x_\epsilon, t_\epsilon)\to \phi_t(x_0, t_0)\qquad \I[B_{R_\epsilon}](\phi, x_\epsilon) \to \I(\phi, x_0) \quad \mbox{as} \ \epsilon \to 0,
\end{equation*}
meanwhile,  by the uniform boundedness and smoothness of $\psi$ we can use~\eqref{estJ} to conclude that
\begin{equation*}
\epsilon \I[B_{R_\epsilon}](\psi(\cdot / \epsilon), x_\epsilon) + (-\Delta)^{1/2} \psi  (y_\epsilon) - \langle b, D\psi (y_\epsilon) \rangle = o_\epsilon(1).
\end{equation*}

Plugging this into~\eqref{stgo} and using the smoothness of $\phi$ again, and the regularity assumption \eqref{A} we arrive at
\begin{multline*}
\phi_t(x_0, t_0) - a(x_0, y_\epsilon) \I(\phi, x_0) + a(x_0, y_\epsilon) (-\Delta)^{1/2} \psi(y_\epsilon)- a(x_0,y_\epsilon)  \langle b, D\psi (y_\epsilon) \rangle+
\\   H(x_0, y_\epsilon, D\phi(x_0,t_0) + D\psi (y_\epsilon)) \leq o_\epsilon(1),
\end{multline*}
and therefore
\begin{equation*}
\phi_t(x_0, t_0) + \bar H(x_0, D\phi(x_0, t_0), \I(\phi, x_0)) \leq o_\epsilon(1),
\end{equation*}
from which we conclude that $\bar u$ is a viscosity subsolution of the effective problem using the continuity of $\bar H$ and letting $\epsilon \to 0$. Observe that $\bar H$ satisfies \eqref{assF} by Lemma \ref{lemaF1}. 
 
By definition $\underline u\leq \bar u$, and moreover %It is clear from~
{\eqref{initial1} implies that %that 
$\underline u$ and $\bar u$ % = u_0$ in $\T^N \times \{ 0 \}$
satisfy \eqref{aucid}}.  So, using Theorem \ref{teoremac}
we deduce  that $\underline u = \bar u$ in $\bar Q_T$. This concludes the proof.
\qed

\section{Homogenization in the case $\sigma < 1$} \label{sectionminore} 

We recall that when $\sigma < 1$ the compensator term {   $\1_B(z) \langle Du(x), z \rangle$} in~\eqref{operator} is not required, so    we consider in this section that
\begin{equation*}
\I(u,x) = \int_{\R^N} [u(x + z) - u(x)] K(z)dz.
\end{equation*}
Then the nonlocal operator has strictly lower order than the gradient term. In the supercritical framework given by assumption~\eqref{H1}, this leads to a dominance of the Hamiltonian term that makes the homogenization problem  %{   quite} %very
 similar to the purely first-order case  {already} %widely
  addressed in the literature. For this reason,  in the current section we mainly remark the new arguments involving the nonlocality. {These features would also allow to weaken some assumptions, e.g., to consider %integrable
 kernels $K^\sigma$ {that are integrable} and with a direct dependence on $x$, but we do not pursue these generalizations here.}

%The mentioned feature has also an effect on the assumptions on the data in this case. In fact, we can weaken several hypotheses of the previous section, as it can be seen in the following result concerning the solvability of the cell problem associated to the expanded equation~\eqref{eqeps} in the current setting
\begin{prop}
[Cell problem]
\label{propcell0}
Assume~\eqref{aK} {   with $\sigma <1$}, \eqref{H0}, \eqref{H1},  \eqref{H20}. 
Then, for all $x, p \in \R^N, l \in \R$ there exists a unique constant $c = \bar H(x, p, l)$ such that problem~\eqref{cell<1} has a Lipschitz continuous viscosity solution $\psi$.
\end{prop}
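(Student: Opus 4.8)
} Since for $\sigma<1$ the cell problem \eqref{cell<1} is a purely first order ergodic Hamilton--Jacobi equation on $\T^N$ (the nonlocal operator does not appear), the plan is to follow the classical vanishing discount method. For $\delta\in(0,1)$ and fixed $x,p\in\R^N$, $l\in\R$, I would first solve the discounted problem
\begin{equation}\label{discountedlow}
\delta\psi^\delta-a(x,y)l+H(x,y,p+D\psi^\delta)=0,\qquad y\in\T^N .
\end{equation}
Constants $\pm M$, with $M$ large enough in terms of $\delta$, $|a|_\infty|l|$ and $|H(\cdot,\cdot,p)|_\infty$, are a super- and a subsolution of \eqref{discountedlow}, and the equation is proper because $\delta>0$; hence Perron's method together with the standard comparison principle for proper first order equations on the torus provides a unique bounded continuous viscosity solution $\psi^\delta$, using only the continuity and periodicity in \eqref{H0}, \eqref{H20}.

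Next I would establish a priori estimates uniform in $\delta$. Testing \eqref{discountedlow} with constants at a maximum and at a minimum point of $\psi^\delta$ and using the growth \eqref{grow} gives $|\delta\psi^\delta|_\infty\le|a|_\infty|l|+|H(\cdot,\cdot,p)|_\infty\le C(1+|l|+|p|^m)$. Plugging this back into \eqref{discountedlow}, the value $H(x,y,p+D\psi^\delta)=a(x,y)l-\delta\psi^\delta$ is bounded by $C(1+|l|+|p|^m)$, so the superlinear coercivity \eqref{strongcond} yields, by the standard doubling argument in the $y$ variable, the $\delta$-independent Lipschitz bound $|\psi^\delta(y)-\psi^\delta(y')|\le C(1+|l|^{1/m}+|p|)|y-y'|$ for $y,y'\in\T^N$, and in particular $\osc_{\T^N}\psi^\delta\le C(1+|l|^{1/m}+|p|)$.

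Then I would pass to the limit $\delta\to0$. With $\tilde\psi^\delta:=\psi^\delta-\psi^\delta(0)$, the family $\{\tilde\psi^\delta\}_\delta$ is equibounded and equi-Lipschitz, so by Ascoli--Arzel\`a a subsequence converges uniformly on $\T^N$ to a Lipschitz function $\psi$, while $-\delta\psi^\delta(0)$ converges along the same subsequence to some $c\in\R$. Since $\tilde\psi^\delta$ solves $\delta\tilde\psi^\delta-a(x,y)l+H(x,y,p+D\tilde\psi^\delta)=-\delta\psi^\delta(0)$ with $\delta\tilde\psi^\delta\to0$ uniformly, stability of viscosity solutions gives that $\psi$ solves \eqref{cell<1} with $c=\bar H(x,p,l)$, and $\psi$ inherits the Lipschitz bound above.

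Finally, I would show $c$ is unique, exactly as in \cite{Evans1,LPV}. If $(\psi,c)$ is any solution of \eqref{cell<1}, then $\psi$ is a sub- (resp. super-) solution of $\delta w-a(x,y)l+H(x,y,p+Dw)=c+\delta|\psi|_\infty$ (resp. $=c-\delta|\psi|_\infty$), while $\psi^\delta+\delta^{-1}(c\pm\delta|\psi|_\infty)$ solves the same equation; comparison for the discounted ($\delta>0$) problem then gives $\psi^\delta-|\psi|_\infty+c/\delta\le\psi\le\psi^\delta+|\psi|_\infty+c/\delta$, so multiplying by $\delta$ and letting $\delta\to0$ we get $c=-\lim_{\delta\to0}\delta\psi^\delta$. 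This identifies $c$ uniquely and, as a byproduct, shows that the whole family $-\delta\psi^\delta$ converges, not merely a subsequence. The only mildly technical point is the $\delta$-uniform Lipschitz estimate in the second step, but it is entirely classical given the superlinear coercivity \eqref{strongcond}; the nonlocal operator plays no role here and enters the homogenization only later, through the value $l=\I(\phi,x_0)$ in the perturbed test function argument.
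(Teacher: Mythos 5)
Your proposal is correct and follows essentially the same route as the paper: both use the vanishing discount approximation $\delta\psi^\delta + H(x,y,p+D\psi^\delta) - a(x,y)l = 0$, derive equi-Lipschitz bounds from the coercivity \eqref{strongcond}, and pass to the limit $\delta\to 0$. The paper gives only a two-line sketch citing \cite{BKLT} for the Lipschitz estimate, whereas you flesh out the Perron construction, the a priori bounds, the Ascoli--Arzel\`a compactness, and the standard uniqueness argument for the ergodic constant, all of which are exactly the ``classical lines'' the paper refers to.
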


As in Proposition~\ref{propcell1}, the solvability of the cell problem is obtained as the limit as $\delta \to 0$ of $\delta \psi^\delta$ with $\psi^\delta$ solving the problem
\begin{equation*}
\delta \psi^\delta(y) + H(x,y, p + D\psi^\delta(y)) - a(x,y) l = 0, \quad y \in \T^N.
\end{equation*}

The coercivity of $H$ in the gradient variable leads to the equi-Lipschitz property for the family $\psi^\delta$, see~\cite{BKLT}. %This, in addition to~
{   Since \eqref{H0} %lead to its 
gives its equiboundedness, we obtain the needed } compactness. From here, the proof follows classical lines.

%The compactness of the family is obtained by the coercivity of $H$. Hence, it is possible to conclude that $\lambda$ can be defined as $\lim_{\delta \to 0} -\delta \psi^\delta$. For later purposes, it is worth to write explicitly the dependence of $\lambda$ in $x,p,l$ as $\lambda = \bar H(x,p,l)$. 
%
%At this point we introduce more specific assumptions on the data to get our results. 
%
%
%We have the following
\begin{lema}\label{propF0}
Under the assumptions of Proposition~\ref{propcell0}, %Assume~\eqref{A},\eqref{H1} and \eqref{H2} hold and let $\bar H$ be 
the effective Hamiltonian $\bar H$ associated to problem~\eqref{cell<1} satisfies the %following
 property
%The effective Hamiltonian $\bar H$ satisfies 
\begin{itemize}
\item[$(i')$] there exists $C > 0$ just depending on the data such that
\begin{equation*}
\begin{split}
& |\bar H(x_1, p_1, l_1) - \bar H(x_2, p_2, l_2)| \\
\leq & \ C |l_1 - l_2| + \omega(|x_1 - x_2|) (1 + | l| + {   |p|^{m}% \vee 1}
}) + \omega(|p_1 - p_2|) (1 + {   |p|^{m -1}}), % \vee 0  }).
\end{split}
\end{equation*}
where $|l| = \max \{ |l_1|, |l_2| \}$, $|p| = \max \{ |p_1|, |p_2| \}$ and $\omega$ is a modulus of continuity related to the one in~\eqref{H20},
%\item[$(ii)$] There exists $b_0 , C > 0$ such that  for all $x, p \in \R^N, l \in \R$
%$$
%\bar H(x, p, l) \geq b_0 |p|^m - |a|_\infty |l| - C.
%$$
%
%\item[$(iii)$] For all $x, p \in \R^N$, the function $l \mapsto \bar H(x, p, l)$ is decreasing.
\end{itemize}
{   as well as the properties $(ii)$ and $(iii)$ of Lemma \ref{lemaF1}.}
\end{lema}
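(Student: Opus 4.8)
The plan is to follow the scheme used for Lemma~\ref{lemaF1}, the decisive simplification being that the cell problem~\eqref{cell<1} is purely first--order: the nonlocal variable $l$ enters only as the parameter $-a(x,y)l$, so the only a priori estimate needed for the discounted correctors $\psi^\delta$ solving $\delta\psi^\delta + H(x,y,p+D\psi^\delta) - a(x,y)l = 0$ on $\T^N$ is the Lipschitz bound from the coercivity~\eqref{strongcond}, not the finer nonlocal estimate~\eqref{C1alpha}. First I would record the uniform--in--$\delta$ bounds. Evaluating the discounted equation at a maximum and a minimum point of $\psi^\delta$ gives $|\delta\psi^\delta|_\infty \le |a|_\infty|l| + |H(\cdot,\cdot,p)|_\infty \le C(1+|l|+|p|^m)$ by~\eqref{grow}; then~\eqref{strongcond}, applied pointwise to $H(x,y,p+D\psi^\delta) = a(x,y)l - \delta\psi^\delta$, yields $|D\psi^\delta|_\infty \le C(1+|l|+|p|^m)^{1/m}$, which is also the equi--Lipschitz estimate of~\cite{BKLT}. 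Recall that $\delta\psi^\delta \to -\bar H(x,p,l)$ uniformly on $\T^N$ as $\delta\to 0$.

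For $(i')$ I would fix two parameter triples $(x_i,p_i,l_i)$, $i=1,2$, with associated $\psi_i^\delta$, put $R := C(1+|l|+|p|^m)^{1/m}$ as a common bound for $|p_i| + |D\psi_i^\delta|_\infty$, and check that $\psi_2^\delta$ solves
\begin{equation*}
\delta\psi_2^\delta + H(x_1,y,p_1+D\psi_2^\delta) - a(x_1,y)l_1 = E(y) \quad\text{in }\T^N,
\end{equation*}
where, using~\eqref{H20}, the uniform continuity of $a$ in~\eqref{H0}, the split $a(x_1,y)l_1 - a(x_2,y)l_2 = a(x_1,y)(l_1-l_2) + (a(x_1,y)-a(x_2,y))l_2$, and the elementary inequalities $R^m \le C(1+|l|+|p|^m)$, $R^{m-1}\le C(1+|l|+|p|^{m-1})$,
\begin{equation*}
\sup_{\T^N}|E| \le C|l_1-l_2| + \omega(|x_1-x_2|)(1+|l|+|p|^m) + \omega(|p_1-p_2|)(1+|l|+|p|^{m-1}),
\end{equation*}
with $\omega$ a modulus dominating the one in~\eqref{H20} and that of $a$. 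The comparison principle for the discounted first--order equation on $\T^N$ then gives $\delta(\psi_2^\delta-\psi_1^\delta)\le\sup_{\T^N}E$, together with the symmetric inequality; letting $\delta\to 0$ and using $\delta\psi_i^\delta\to-\bar H(x_i,p_i,l_i)$ yields $(i')$.

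For $(ii)$ I would argue exactly as in Lemma~\ref{lemaF1}$(ii)$: at a maximum point $y_0$ of $\psi^\delta$ the constant $\psi^\delta(y_0)$ is an admissible test function from above, so $\delta\psi^\delta(y_0) + H(x,y_0,p) - a(x,y_0)l\le 0$, hence $-\delta\psi^\delta(y_0) \ge b_0|p|^m - |a|_\infty|l| - C$ by~\eqref{strongcond}, and letting $\delta\to 0$ gives the bound. For $(iii)$ I would reproduce the argument of Lemma~\ref{lemaF1}$(iii)$ verbatim: if $l_1<l_2$ but $\bar H(x,p,l_1)<\bar H(x,p,l_2)$, then the solution $\psi_2$ of~\eqref{cell<1} with parameter $l_2$ (normalised so that $\psi_2<\psi_1$) is a \emph{strict} supersolution of~\eqref{cell<1} with parameter $l_1$, because $a>0$ and $l_2>l_1$; the strict inequality then lets us compare and conclude $\psi_2\ge\psi_1$, a contradiction.

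I do not anticipate a serious obstacle here: the whole argument lives at the level of discounted first--order Hamilton--Jacobi equations on the compact torus, for which existence, the uniform Lipschitz bound, and the comparison principle are all available from~\cite{BKLT} and classical theory. The only points demanding care are keeping every constant independent of $\delta$ and handling the term $a(x_1,y)l_1 - a(x_2,y)l_2$ through the split above so that the $|l|$--dependence stays linear; the milder growth exponents in $(i')$ compared with Lemma~\ref{lemaF1} reflect precisely that here $|D\psi^\delta|$ grows like $(1+|l|+|p|^m)^{1/m}$ rather than like $(1+|l|+|p|^m)$.
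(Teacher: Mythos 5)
Your proof follows the paper's own argument almost verbatim: discounted approximations $\psi_i^\delta$, $L^\infty$ and Lipschitz bounds from coercivity, perturbing the equation for $\psi_2^\delta$ so that it becomes an approximate sub/supersolution of the problem with parameters $(x_1,p_1,l_1)$, comparison, and $\delta\to 0$; and for $(ii)$--$(iii)$ you reproduce the argument of Lemma~\ref{lemaF1} with the nonlocal term simply absent, which is exactly what the paper does. One small point worth flagging: you (correctly) derive $|D\psi^\delta|_\infty \le C(1+|l|+|p|^m)^{1/m}$, whereas the paper's proof invokes the bound $|D\psi_2|_\infty \le C|p|^{1/m}$ without the $|l|$-dependence; as a result your final estimate for the $\omega(|p_1-p_2|)$ term carries an extra $|l|$, i.e.\ you obtain $\omega(|p_1-p_2|)(1+|l|+|p|^{m-1})$ rather than the stated $\omega(|p_1-p_2|)(1+|p|^{m-1})$. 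This is a weaker inequality than the one displayed in $(i')$, but it still implies \eqref{assF} (with $n=1$), which is all that the homogenization theorem needs, and your version of the Lipschitz bound appears to be the accurate one; so your accounting is arguably the more careful. In short: same route, correct, with a minor (and in your favour) discrepancy in one exponent.
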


\medskip
\noindent
{\bf \textit{Proof:}} We concentrate on $(i')$ to provide explicit bounds. The proof of $(ii)$ and $(iii)$ follow as in Lemma~\ref{lemaF1}.

Let $x_1, x_2, p_1, p_2 \in \R^N$ and $l_1, l_2 \in \R$ and for $\delta > 0$ and $i=1,2$ consider the approximating problems
\begin{equation*}
\delta \psi_i - a(x_i, y) l_i + H(x_i, y, p_i + D\psi_i) = 0 \quad \mbox{in} \ \T^N.
\end{equation*}

%where $a_i(y) = a(x_i,y)$ and $H_i(y, p_i + D\psi_i)=H(x_i,y, p_i + D\psi_i)$.  
We use the equation solved by $\psi_2$, {\eqref{H20}, % and the growth/continuity assumptions on $H$
the uniform continuity of $a$, and the known Lipschitz continuity of   $\psi_2$ ~\cite{B-CD,Barlesbook},} to write
\begin{equation*}
\begin{split}
& \delta \psi_2 - a(x_1, y) l_1 + H(x_1, y, p_1 + D\psi_2) \\
\leq & C|l_1 - l_2| + \omega(|x_1 - x_2|) (| l|  + {1} %L_H
 + (|p| + |D\psi_2|_{\infty})^m) \\ & + \omega(|p_1 - p_2|) \Big{(}1 + (|p| + |D\psi_2|_{\infty})^{m-1}%(m - 1) \vee 0 }
 \Big{)},
\end{split}
\end{equation*}
where {$\omega$ is the maximum between the modulus of continuity of $a$ and the modulus appearing in  \eqref{H20}}. %we have used the well-known fact that $\psi_2$ is a Lipschitz continuous function, see~\cite{B-CD,Barlesbook}. 
Moreover, condition~\eqref{H1} implies that $|D\psi_2|_{\infty} \leq C |p|^{1/m}$ for some $C > 0$ just depending on the data. From here, we arrive at
\begin{align*}
& \delta \psi_2 - {a(x_1, y)} %a_{1}
 l_1 + H(x_1, y, p_1 + D\psi_2) \\
\leq & C|l_1 - l_2| + \omega(|x_1 - x_2|) ( 1 + |l|  +| p|^{m  \vee 1} ) + \omega(|p_1 - p_2|) \Big{(}1 + |p|^{(m - 1) \vee 0 }\Big{)},
\end{align*}
and therefore, by the comparison principle, we get the existence of $C > 0$ just depending on the data such that
\begin{equation*}
\begin{split}
\delta (\psi_2^\delta - \psi_1^\delta) \leq & C|l_1 - l_2| + \omega(|x_1 - x_2|) (1 +  |l| + |p|^{m \vee 1}) + \omega(|p_1 - p_2|) (1 +| p|^{(m - 1) \vee 0 }).
\end{split}
\end{equation*}

A similar lower bound can be obtained. Letting $\delta \to 0^+$ and considering %and
 the definition of $\bar H$ we conclude the result.
\qed

\medskip

Now we are in position to prove %ide the proof of 
the homogenization result for this case.
\begin{teo}[Homogenization]\label{teohomo0}
Under the assumptions of Proposition~\ref{propcell0} {and for $u_0\in BUC(\R^N)$}, the family of solutions $\{ u^\epsilon \}$ of~\eqref{eq}-\eqref{initial} converges {locally} uniformly to %the unique
{a} viscosity solution $u$ of the associated effective problem~\eqref{eqeffective} with $\bar H$ given in Proposition~\ref{propcell0}. {Moreover  $u$ is the unique %continuous 
 solution of \eqref{eqeffective} attaining uniformly continuously the initial data $u_0$.}
% uniformly to the unique viscosity solution of the associated effective problem~\eqref{eqeffective} with $\bar H$ given in Proposition~\ref{propcell0}.
%Assume $\sigma \in (0,1)$ and $K$ satisfies~\eqref{kernel}. Under the assumptions of Proposition~\ref{propF0}, the family of solutions $\{ u^\epsilon \}$ to problem~\eqref{eq} converges locally uniformly in $\R^N$ to the unique bounded viscosity solution of the problem~\eqref{eqeffective} 
\end{teo}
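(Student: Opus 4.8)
\noindent
The plan is to follow the same scheme as in the proof of Theorem~\ref{teohomo1}, with the considerable simplification that for $\sigma<1$ the nonlocal contribution in the ansatz carries a strictly positive power of $\epsilon$ and therefore vanishes in the limit, so no analysis of the corrector in the nonlocal variable is needed: the cell problem \eqref{cell<1} is purely first order and its solvability together with the Lipschitz regularity of $\psi$ are already supplied by Proposition~\ref{propcell0}. First I would recall from Proposition~\ref{existence} that $\{u^\epsilon\}$ is equibounded in $\bar Q_T$ and satisfies the uniform initial estimate \eqref{initial1}, and define the half-relaxed semilimits $\bar u=\limsup{}^*_\epsilon u^\epsilon$ and $\underline u=\liminf{}_{*,\epsilon} u^\epsilon$, which are bounded, u.s.c. and l.s.c. respectively, with $\underline u\le\bar u$; passing \eqref{initial1} to the limit shows that both $\bar u$ and $\underline u$ attain the datum $u_0$ uniformly continuously, i.e. satisfy \eqref{aucid}.

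The core step is the perturbed test function argument showing that $\bar u$ is a viscosity subsolution of \eqref{eqeffective} (the supersolution property of $\underline u$ being symmetric). Given a smooth $\phi$ with $(x_0,t_0)$ a strict global maximum of $\bar u-\phi$, set $p=D\phi(x_0,t_0)$ and $l=\I(\phi,x_0)$, and let $\psi$ be the Lipschitz solution of \eqref{cell<1} with eigenvalue $\bar H(x_0,p,l)$ given by Proposition~\ref{propcell0}. Using $\phi_\epsilon(x,t)=\phi(x,t)+\epsilon\,\psi(x/\epsilon)$ as a test function for $u^\epsilon$ at a nearby maximum point $(x_\epsilon,t_\epsilon)$, on a ball $B_{R_\epsilon}(x_\epsilon)$ with $R_\epsilon\to\infty$, one passes to the limit: $\phi_t(x_\epsilon,t_\epsilon)\to\phi_t(x_0,t_0)$, $\I[B_{R_\epsilon}](\phi,x_\epsilon)\to\I(\phi,x_0)$, the tail term $\I[B_{R_\epsilon}^c](u^\epsilon,x_\epsilon)$ is controlled via the maximality of $(x_\epsilon,t_\epsilon)$ and equiboundedness of $u^\epsilon$ (the genuine tail being $O(R_\epsilon^{-\sigma})$), and crucially the corrector's nonlocal contribution satisfies $\epsilon\,\I(\psi(\cdot/\epsilon),x_\epsilon)=O(\epsilon^{1-\sigma})=o_\epsilon(1)$ since $\psi$ is bounded and Lipschitz and $\sigma<1$ — this is exactly \eqref{estJ} in the case $\sigma<1$, where $b=0$. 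Invoking \eqref{cell<1} and the continuity of $\bar H$ (Lemma~\ref{propF0}), this yields $\phi_t(x_0,t_0)+\bar H(x_0,D\phi(x_0,t_0),\I(\phi,x_0))\le0$, i.e. the subsolution property, and symmetrically the supersolution property for $\underline u$.

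Finally, by Lemma~\ref{propF0} the effective Hamiltonian $\bar H$ satisfies the structure condition \eqref{assF}, is superlinear in the gradient (property $(ii)$, i.e. \eqref{superlin}) and degenerate elliptic in the nonlocal variable (property $(iii)$). Since $u_0\in BUC(\R^N)$ and both $\bar u,\underline u$ attain $u_0$ uniformly continuously, Theorem~\ref{teoremac} applies and gives $\bar u\le\underline u$ in $\bar Q_T$; hence $\bar u=\underline u=:u$ is continuous, $u^\epsilon\to u$ locally uniformly, $u$ solves \eqref{eqeffective}, and the uniqueness part of Theorem~\ref{teoremac} identifies it as the unique solution attaining $u_0$ uniformly continuously. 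The only delicate point is the bookkeeping of the nonlocal evaluations in the perturbed test function step — splitting $\I$ at radius $R_\epsilon$, estimating the tail, and showing the corrector's nonlocal term is negligible — but in contrast with the case $\sigma=1$ this is routine here, since there is no extra drift and the relevant term scales like $\epsilon^{1-\sigma}$.
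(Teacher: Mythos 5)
Your overall scheme is the same as the paper's: pass to half-relaxed semilimits $\bar u,\underline u$, use the corrector from Proposition~\ref{propcell0} in a perturbed test function argument to show $\bar u$ and $\underline u$ are sub- and supersolutions of \eqref{eqeffective}, observe via \eqref{initial1} that both attain $u_0$ uniformly continuously, and conclude by Theorem~\ref{teoremac} after checking that $\bar H$ satisfies \eqref{assF} via Lemma~\ref{propF0}. Your observations on the scaling of the corrector's nonlocal contribution (order $\epsilon^{1-\sigma}$, no drift since $b=0$) and on the tail estimate are also consistent with the paper.

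However, there is a genuine gap in the perturbed test function step, and it is precisely the point the paper flags as the main difference from the case $\sigma=1$. Proposition~\ref{propcell0} only gives a \emph{Lipschitz} corrector $\psi$; it is not $C^{1,\alpha}$ (unlike the $\sigma=1$ corrector of Proposition~\ref{propcell1}). You then write ``Using $\phi_\epsilon(x,t)=\phi(x,t)+\epsilon\psi(x/\epsilon)$ as a test function for $u^\epsilon$'' — but this is not admissible under Definition~\ref{defvisco}, which requires $C^2$ test functions. Nor can you evaluate $D\phi_\epsilon$ or use $\psi$ as a classical solution of \eqref{cell<1}, since $D\psi$ exists only a.e. Thus the step ``Invoking \eqref{cell<1}\ldots yields $\phi_t(x_0,t_0)+\bar H(x_0,D\phi(x_0,t_0),\I(\phi,x_0))\le 0$'' is not justified as written. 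You identify the ``only delicate point'' as the bookkeeping of the nonlocal tails, but that is not the issue; the issue is the regularity of $\psi$.

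The correct route, which the paper invokes (with references to Alvarez--Bardi and Evans), is the argument \emph{by contradiction}: assume $\phi_t(x_0,t_0)+\bar H(x_0,D\phi(x_0,t_0),\I(\phi,x_0))=\theta>0$, and then show that the Lipschitz function $\psi$ is a viscosity supersolution of a perturbed cell equation on a small neighborhood, so that $\phi+\epsilon\psi(\cdot/\epsilon)$ is a \emph{viscosity} supersolution (not a smooth test function) of the $\epsilon$-equation near $(x_0,t_0)$ up to an $o_\epsilon(1)$ error; one then compares this supersolution to $u^\epsilon$ (e.g.\ by a doubling-of-variables argument in the fast variable that separates the roles of the smooth function $\phi$ and the merely Lipschitz $\psi$), and the strict maximum at $(x_0,t_0)$ gives a contradiction. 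This extra layer is what makes the $\sigma<1$ case not a mere simplification of $\sigma=1$ but rather a different rigorous implementation of the same formal idea. Your write-up skips exactly that layer.

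A secondary, minor point: in the uniqueness step you cite Lemma~\ref{propF0} to get \eqref{assF}, but the estimate in Lemma~\ref{propF0}$(i')$ has $\omega(|x_1-x_2|)(1+|l|+|p|^m)$ rather than $\omega(|x_1-x_2|(1+|l|+|p|^m)^n)$; passing from one to the other uses the normalization $\omega(r)\le Cr$ for $r\ge 1$ (and a new modulus), as the paper notes. This is routine but should be mentioned.
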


\noindent
{\bf \textit{Proof:}} As in the proof of Theorem \ref{teohomo1}, we consider the half-relaxed semilimits $\bar u, \underline u$. We are able to prove that $\bar u, \underline u$ are respective viscosity sub and supersolution to the effective problem,  the main difference being that $\phi_\epsilon$ cannot be used directly as a test function because %since 
$\psi$ is just Lipschitz continuous. Anyway a standard argument by contradiction based on viscosity solution theory (see \cite{AB1, % Evans1,
 Evans2}) can be used to make it rigorous.

The uniqueness of the limit problem comes from Theorem \ref{teoremac}, observing that, by Lemma \ref{propF0} {and the property $\omega(r)\leq Cr$ for all $r\geq1$,} the effective operator $\bar H$ satisfies \eqref{assF} {(possibly with a different modulus $\omega$)}. 
\qed

%
%
%We prove that for all $d > 0$ small enough and all $\gamma$ small enough in terms of $d$, we prove that for some $c_d > 0$ with $c_d \to 0$ as $d \to 0$, $\bar u^\gamma \leq \underline u + c_d$ in $\bar Q_T$, from which the result follows after taking $\gamma \to 0, d \to 0$. By abuse of notation, we just write $\bar u$ assuming it satisfy the mentioned properties for $\bar u^\gamma + c_d$.

%%%%%%%%%%%%%%%%%%%%%%%%%%%%%%%%%%%%%%%%%%%%%%%%%%%%%%%%%%%%%%%%%%%%%
%%%%%%%%%%%%%%%%%%%%%%%%%%%%%%%%%%%%%%%%%%%%%%%%%%%%%%%%%%%%%%%%%%%%

%%%%%%%%%%%%%%%%%%%%%%%%%%%%%%%%%%%%%%%%%%%%%%%%%%%%%%%%%%%%%%%%%%%%%%
%%%%%%%%%%%%%%%%%%%%%%%%%%%%%%%%%%%%%%%%%%%%%%%%%%%%%%%%%%%%%%%%%%%%%%

\section{Homogenization in the case $\sigma > 1$}\label{sectionmaggiore}

%Now
In this section we deal with the case $\sigma \in (1,2)$. {Let us mention that 
the stronger ellipticity nature of this case %$\sigma > 1$
would allow to weaken some assumptions, e.g., to consider non-coercive Hamiltonians $H$, but we do not pursue theses generalizations here. }

The solvability of the cell problem now reads as follows. 
 \begin{prop}[Cell problem]
 \label{propcell2}
Assume~\eqref{aK} {   with $1<\sigma<2$},~\eqref{H0}, \eqref{H1} {   and \eqref{H20}}. Then, for each $x,p,l$ there exists a constant $c = \bar H(x,p,l)$ such that the cell problem~\eqref{cell>1}
has a classical solution $C^{1, \alpha}$ with $1 + \alpha > \sigma$, and such solution is unique up to additive constants.

Moreover, we have the following characterization of the effective Hamiltonian $\bar H$:
\begin{equation}
\label{Hexplicit}
\bar H(x,p,l) = -A(x)l + \int_{\T^N} %a^{-1}(x,y)
{   \frac {H(x,y,p)}{a(x,y)} }dy, \quad \mbox{for} \ x,p \in \R^N, \ l \in \R,
\end{equation}
where $A(x) := {   1/ \left( \int_{\T^N}%a^{-1}
{ \frac 1{a(x,y)}}dy \right)}$. % and  $a^{-1}(x,y)$ denote the inverse of the function $y\to a(x,y)$ with $x$ fixed. 
\end{prop}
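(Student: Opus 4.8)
The strategy is the standard ergodic approximation, exploiting the fact that for $\sigma>1$ the fractional Laplacian is of higher order than the gradient term, so the equation becomes ``subcritical'' and the linear structure of the effective operator emerges. First I would introduce, for $\delta\in(0,1)$, the discounted problem
\begin{equation*}
\delta\psi^\delta - a(x,y)\,l + a(x,y)(-\Delta)^{\sigma/2}\psi^\delta + H(x,y,p) = 0 \quad\text{in }\T^N,
\end{equation*}
noting that here $H$ does \emph{not} contain $D\psi^\delta$ — this is the key simplification of the $\sigma>1$ regime, visible already in the formal expansion (the gradient correction enters at order $\epsilon^{\sigma-1}\to0$). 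Since the zeroth-order term $H(x,y,p)$ is a fixed bounded datum and the operator $v\mapsto \delta v + a(x,y)(-\Delta)^{\sigma/2}v$ satisfies comparison, existence and uniqueness of a bounded continuous $\psi^\delta$ follow by Perron's method; the bound $|\psi^\delta|_\infty\le \delta^{-1}(|a|_\infty|l|+|H(x,\cdot,p)|_\infty)$ is immediate from constant barriers. Then $|\delta\psi^\delta|_\infty$ is bounded uniformly in $\delta$, and the oscillation $\osc_{\T^N}\psi^\delta$ is controlled by interior fractional regularity (e.g. the estimates used in \cite{BKLT,Silvestre}): since $(-\Delta)^{\sigma/2}\psi^\delta$ equals a bounded function plus $\delta\psi^\delta$ which is bounded, one gets $\psi^\delta\in C^{\sigma+\alpha'}$ for some $\alpha'>0$ with a bound independent of $\delta$, hence in particular $C^{1,\alpha}$ with $1+\alpha>\sigma$ and uniform oscillation.

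Next I would pass to the limit $\delta\to0$: up to subsequences and subtracting $\min\psi^\delta$, the functions $\tilde\psi^\delta:=\psi^\delta-\min\psi^\delta$ converge uniformly (by the equicontinuity from the uniform $C^{1,\alpha}$ bound and Arzel\`a--Ascoli on $\T^N$) to some $\psi$, and $-\delta\psi^\delta$ converges uniformly to a constant $c=:\bar H(x,p,l)$; stability of viscosity solutions shows $\psi$ solves \eqref{cell>1} with that constant. Uniqueness of the constant $c$ follows from the comparison principle for \eqref{cell>1} (if two constants worked with solutions $\psi_1,\psi_2$, comparing $\psi_1$ with $\psi_2+\text{const}$ on the torus forces the constants to coincide, as in \cite{Evans1}), and uniqueness of $\psi$ up to additive constants follows likewise from the strong maximum principle for the fractional Laplacian.

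The remaining point is the explicit formula \eqref{Hexplicit}. Rewrite the cell equation \eqref{cell>1} as
\begin{equation*}
(-\Delta)^{\sigma/2}\psi(y) = l - \frac{H(x,y,p)}{a(x,y)} + \frac{\bar H(x,p,l)}{a(x,y)} \quad\text{in }\T^N .
\end{equation*}
Since $\psi$ is a classical $C^{1,\alpha}$, $1+\alpha>\sigma$, periodic solution, the fractional Laplacian $(-\Delta)^{\sigma/2}\psi$ is a continuous periodic function whose integral over $\T^N$ vanishes — this is the solvability (Fredholm) condition, proved by integrating the singular integral defining $(-\Delta)^{\sigma/2}\psi$ over $\T^N$ and using Fubini together with periodicity to cancel the increments. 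Integrating the displayed identity over $\T^N$ therefore gives
\begin{equation*}
0 = l\,|\T^N| - \int_{\T^N}\frac{H(x,y,p)}{a(x,y)}\,dy + \bar H(x,p,l)\int_{\T^N}\frac{dy}{a(x,y)},
\end{equation*}
and solving for $\bar H$ yields \eqref{Hexplicit} with $A(x)=\bigl(\int_{\T^N}1/a(x,y)\,dy\bigr)^{-1}$ (using $|\T^N|=1$). Conversely, with this constant the right-hand side above has zero mean, so a periodic solution $\psi$ exists, confirming consistency.

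The main obstacle I expect is making the solvability condition $\int_{\T^N}(-\Delta)^{\sigma/2}\psi\,dy=0$ rigorous at the available regularity: one must justify interchanging the integral over $\T^N$ with the (principal-value) singular integral in $z$, which requires the $C^{1,\alpha}$ bound with $1+\alpha>\sigma$ precisely so that the integrand $\delta_1(\psi,y,z)K^\sigma(z)$ is dominated by an $L^1$ function uniformly in $y$ — this is exactly why the proposition asserts $1+\alpha>\sigma$ rather than merely $C^{1,\alpha}$. Everything else is a routine adaptation of \cite{Evans1,BKLT}.
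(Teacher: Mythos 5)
Your strategy coincides with the paper's at every structural point: the discounted approximation, passing to the limit $\delta\to 0$, and deriving the formula \eqref{Hexplicit} from a solvability condition. The direct derivation of the constraint $\int_{\T^N}(-\Delta)^{\sigma/2}\psi\,dy=0$ by Fubini and periodicity is in fact more concrete than the paper's brief appeal to self-adjointness and the Fredholm alternative, and is a perfectly good alternative once $\psi$ is known to be sufficiently regular.

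However, there is a genuine gap in your justification of the uniform oscillation bound for $\psi^\delta$. You argue that since $(-\Delta)^{\sigma/2}\psi^\delta=\bigl(a(x,\cdot)l-H(x,\cdot,p)-\delta\psi^\delta\bigr)/a(x,\cdot)$ is uniformly bounded in $L^\infty$, ``interior fractional regularity'' from \cite{BKLT,Silvestre} gives a $C^{\sigma+\alpha'}$ bound on $\psi^\delta$ independent of $\delta$, hence a uniform oscillation bound. But the interior H\"older/Schauder estimates you cite have the form $\|u\|_{C^\gamma(B_{1/2})}\leq C\bigl(\|u\|_{L^\infty(\R^N)}+\|f\|_\infty\bigr)$: they control the seminorm only in terms of the $L^\infty$ norm of the solution itself. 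Here $\|\psi^\delta\|_\infty\sim\delta^{-1}$ blows up, so these estimates yield nothing uniform when applied to $\psi^\delta$; and if you first normalize to $\tilde\psi^\delta:=\psi^\delta-\psi^\delta(0)$, you can only invoke those estimates once you already know $\|\tilde\psi^\delta\|_\infty$ is bounded uniformly in $\delta$ --- which is precisely the oscillation bound you are trying to establish. The argument is circular as written. The paper closes this gap with a dedicated compactness-contradiction argument: assuming $|\tilde\psi^\delta|_\infty\to\infty$ along a subsequence, set $v^\delta=\tilde\psi^\delta/|\tilde\psi^\delta|_\infty$, observe $|v^\delta|_\infty=1$ and $\bigl|(-\Delta)^{\sigma/2}v^\delta\bigr|\leq C(\delta)\to 0$, use the interior H\"older estimates of \cite{CS1} (now legitimately, since $\|v^\delta\|_\infty=1$) to extract a uniformly convergent subsequence $v^\delta\to\bar v$ with $(-\Delta)^{\sigma/2}\bar v=0$ on $\T^N$, then invoke the strong maximum principle to conclude $\bar v$ is constant, contradicting $\bar v(0)=0$ and $|\bar v|_\infty=1$. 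An equally valid alternative is a torus-specific Poincar\'e-type estimate $\|u-\fint u\|_{C^\gamma(\T^N)}\lesssim\|(-\Delta)^{\sigma/2}u\|_{L^\infty(\T^N)}$ coming from the spectral gap on $\T^N$, but that is not what the references you cite provide, and it must be stated and proved (or properly referenced) rather than absorbed into ``interior fractional regularity.''

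One further small remark: you argue that the hypothesis $1+\alpha>\sigma$ is needed precisely to justify the Fubini step in the solvability condition. It does suffice, but it is actually more than necessary for that purpose --- from the equation itself, $(-\Delta)^{\sigma/2}\psi = \bar H/a + l - H/a$ is manifestly bounded and continuous, so it is integrable on $\T^N$ regardless. The strength $1+\alpha>\sigma$ is what the formal expansion in Section \ref{secexpansion} genuinely requires (to make the $o_\epsilon(1)$ estimates in \eqref{estJ} work via second-order Taylor expansion), not the Fredholm computation.
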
 

\noindent
{\bf \textit{Proof:}} Fixed $x,p,l$, for each $\delta > 0$ we consider the vanishing discount %problem
approximation of \eqref{cell>1}
\begin{equation*}
\delta \psi - a(x,y) l + a(x,y)(-\Delta)^{\sigma/2} \psi(y) + H(x, y, p) = 0, \quad y \in \T^N,
\end{equation*}
which can be uniquely solved by a function $\psi^\delta$ such that $\delta \psi^\delta$ is bounded. Then, we define the function $\tilde \psi^\delta(y) = \psi^\delta(y) - \psi^\delta(0)$ and claim that it is uniformly bounded. The argument is %well-
known (see for instance~\cite{B-Ch-C-I-Lip}, sublinear case), but we provide a sketch of the proof for completeness. {   By contradiction, if $\tilde \psi^\delta$ is not bounded, up to subsequences we can consider $|\tilde \psi^\delta|_\infty \to \infty$ as $\delta \to 0$ and from here we define $v^\delta = \tilde \psi^\delta/|\tilde \psi^\delta|_\infty$. By construction, $|v^\delta|_\infty = 1$ for all $\delta$ and satisfies, in the viscosity sense, a problem with the form
\begin{equation*}
-C(\delta) \leq (-\Delta)^{\sigma/2} v^\delta \leq C(\delta) \quad \mbox{in} \ \T^N,
\end{equation*}
for some constant $C(\delta) \to 0$ as $\delta \to \infty$. Then, by the interior H\"older estimates presented in~\cite{CS1} we conclude that the family $\{ v^\delta \}$ is equi-H\"older continuous. By stability results in the viscosity theory, and up to subsequences, there exists a function $\bar v$ such that $v^\delta \to \bar v$ uniformly in the torus, solving the problem $(-\Delta)^{\sigma/2} \bar v = 0$ in $\T^n$. Thus, by Strong Maximum Principle, it must be a constant. However, by construction $\bar v(0) = 0$ and $|\bar v|_\infty = 1$, a contradiction.
}

Then, using stability results over the family $\{ \tilde \psi^\delta \}$ we get the existence of a constant $c$ such that~\eqref{cell>1} has a continuous solution (which ends up to be classical by the regularity results in~\cite{Si1}). Applying a strong maximum principle in~\cite{C} we conclude this constant is unique and the solution of the problem is unique up to an additive constant.

Finally, the characterization of the effective Hamiltonian is obtained writing~\eqref{cell>1} as
\begin{equation*}
(-\Delta)^{\sigma/2} \psi = a^{-1}(x,y) (\bar H(x,p,l) - H(x,y, p)) + l =: f(x,y,p,l).
\end{equation*}

Since the fractional Laplacian is a self adjoint operator and by the strong maximum principle we have that the unique solutions to $(-\Delta)^{\sigma/2} u = 0$ in $\T^N$ are constants. By Fredholm alternative the above problem is solvable if and only if 
\begin{equation*}
\int_{\T^N} f(x,y,p,l)dy = 0,
\end{equation*}
from which the characterization of $\bar H$ follows.
\qed

The above characterization of the effective Hamiltonian allows us to conclude the homogenization result more directly.
\begin{teo}[Homogenization]
Under the assumptions of Proposition~\ref{propcell2}  {and for $u_0\in BUC(\R^N)$}, the family of solutions $u^\epsilon$ to problem~\eqref{eq}-\eqref{initial} converges {locally} uniformly to the unique viscosity solution {$u$} of the associated effective problem~\eqref{eqeffective}, with $\bar H$ given in Proposition~\ref{propcell2}, {satisfying $u(x,0)=u_0(x)$}.
\end{teo}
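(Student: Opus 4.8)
The plan is to follow the same scheme as in the proofs of Theorems~\ref{teohomo1} and~\ref{teohomo0}, but exploiting that here the effective Hamiltonian is explicit, so no delicate comparison principle from Section~\ref{seccomparison} is needed. First I would invoke Proposition~\ref{existence} to get that the family $\{u^\epsilon\}$ is uniformly bounded in $\bar Q_T$, and that \eqref{initial1} holds uniformly in $\epsilon$; then I set $\bar u = \limsup^*_\epsilon u^\epsilon$ and $\underline u = \liminf^*_\epsilon u^\epsilon$, which are respectively a bounded u.s.c.\ subsolution and a bounded l.s.c.\ supersolution of the effective equation~\eqref{eqeffective} with $\bar H$ as in~\eqref{Hexplicit}. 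The verification is done via the perturbed test function method exactly as in the proof of Theorem~\ref{teohomo1}: given a smooth $\phi$ and a strict max point $(x_0,t_0)$ of $\bar u-\phi$, one forms $\phi_\epsilon(x,t)=\phi(x,t)+\epsilon^\sigma\psi(x/\epsilon)$ with $\psi$ the $C^{1,\alpha}$ corrector solving~\eqref{cell>1} for $(x_0,D\phi(x_0,t_0),\I(\phi,x_0))$; since $1+\alpha>\sigma$, $\psi$ has enough regularity for $\I(\psi(\cdot/\epsilon),x_\epsilon)$ to be controlled through~\eqref{estJ} (with $b=0$ in this regime), and the gradient perturbation $\epsilon^{\sigma-1}D\psi(y_\epsilon)\to 0$, so passing to the limit yields $\phi_t(x_0,t_0)+\bar H(x_0,D\phi(x_0,t_0),\I(\phi,x_0))\le 0$.

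Next I would use the initial condition. From~\eqref{initial1} applied uniformly in $\epsilon$ one gets $\bar u(x,0)\le u_0(x)$ and $\underline u(x,0)\ge u_0(x)$, and in fact both semilimits attain $u_0$ uniformly continuously in the sense of~\eqref{aucid}; in particular $\bar u(\cdot,0)=\underline u(\cdot,0)=u_0$. Now, since the effective equation here is
\[
u_t -  \frac{ \I(u,x)}{\int_{\T^N} 1/a(x,y)\,dy} + \int_{\T^N} \frac{H(x,y,Du)}{a(x,y)}\,dy = 0,
\]
the nonlocal operator $\I$ does \emph{not} interact with the state variable through the coefficient (the factor $A(x)$ multiplies $\I(u,x)$, which is exactly the situation excluded from the difficulty of Section~\ref{seccomparison}), the equation falls within the comparison theory of Barles and Imbert~\cite{BI}; hence $\bar u\le\underline u$ on $\bar Q_T$. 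Combined with the trivial inequality $\underline u\le\bar u$, this gives $\bar u=\underline u=:u$, a continuous viscosity solution of~\eqref{eqeffective} with $u(\cdot,0)=u_0$, and the standard half-relaxed-limit argument upgrades the convergence of $u^\epsilon$ to $u$ to locally uniform convergence on $\bar Q_T$. Uniqueness of $u$ among solutions with $u(\cdot,0)=u_0$ is again~\cite{BI}.

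The only genuinely new point relative to the earlier sections is checking that the coefficient structure of the effective equation matches the hypotheses of~\cite{BI}; this is immediate from~\eqref{Hexplicit} since $A$ is uniformly continuous and bounded below by $a_0$ (as $1/a(x,\cdot)$ is uniformly continuous and lies in $[a_0,a_0^{-1}]$), and the $x$-dependence enters $\bar H$ only through $A(x)$ multiplying $l$ and through the average of $H(x,\cdot,p)/a(x,\cdot)$, which is uniformly continuous in $x$ with the growth~\eqref{grow} in $p$. So I expect no real obstacle here; the proof is essentially a simplified version of Theorem~\ref{teohomo1} in which the delicate Theorem~\ref{teoremac} is replaced by the off-the-shelf result~\cite{BI}.
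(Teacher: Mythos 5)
Your proposal is correct and follows essentially the paper's (very terse) proof: the half-relaxed semilimits are shown to be sub/supersolutions of~\eqref{eqeffective} by the perturbed test function method exactly as in Theorem~\ref{teohomo1}, and comparison is then obtained from Barles--Imbert~\cite{BI} after dividing the effective equation by $A(x)$ (bounded and uniformly positive), which decouples the nonlocal term from the state variable. The only small imprecision is the parenthetical remark that $b=0$ here: the drift $b$ is defined only for $\sigma=1$, whereas for $\sigma>1$ the estimate~\eqref{estJ} directly gives $J(\psi_\epsilon,\cdot)=o_\epsilon(1)$, so nothing in your argument is affected.
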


\noindent
{\bf \textit{Proof:}} Also in this case, the proof of the convergence of the family follows the lines provided in Theorem~\ref{teohomo1}. {Now the} uniqueness of the effective problem follows at once from the comparison principle in~\cite{BI}, noticing that the term $A$ in~\eqref{Hexplicit} is bounded and uniformly positive and therefore we can divide by it to get rid of the $x$-dependence of the nonlocality. We omit the details.

\section{Appendix}\label{appendix}

We start providing a proof for the Lipschitz bounds leading to~\eqref{Lipbound} in the proof of Proposition~\ref{propcell1}.
%To prove the above proposition we require the following technical estimate which can be seen as a corollary of Theorem 3.1 in~\cite{BLT}.
	\begin{lema}
	\label{lemaLip}
		Let $\delta \in (0,1)$, $\tilde a \in C(\T^N)$ strictly positive, and $\tilde H \in C(\T^N \times \R^N)$ satisfying the assumptions~\eqref{H1} (in the $x$ independent setting). 
		For $p, l$ fixed, let $\psi$ be a continuous solution to the problem
		\begin{equation*}
		\delta \tilde a \psi -l + (-\Delta)^{1/2} \psi + \tilde H(y, p + D\psi) = 0 \quad \mbox{in} \ \T^N.
		\end{equation*}
		Then there exists a constant $C > 0$ depending {   only} on the data such that
		\begin{equation*}
		|\psi(x) - \psi(y)| \leq C(1 + \osc(\psi) + |l| + |p|^m)|x - y|, \quad \mbox{for} \ x,y \in \T^N.
		\end{equation*}
	\end{lema}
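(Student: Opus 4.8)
The plan is to establish the Lipschitz bound via the classical doubling-of-variables argument for coercive Hamilton-Jacobi equations (in the spirit of Barles \cite{BLT} and \cite{B-CD}), adapted to handle the nonlocal term $(-\Delta)^{1/2}\psi$. Fix $x,y\in\T^N$, set $M := C(1 + \osc(\psi) + |l| + |p|^m)$ for a large constant $C$ to be chosen, and consider the auxiliary function
\begin{equation*}
\Phi(x,y) := \psi(x) - \psi(y) - L\,\phi(|x-y|) ,
\end{equation*}
where $\phi$ is a suitable concave modulus (e.g.\ $\phi(r) = r - \kappa r^{3/2}$ on a small interval, extended appropriately on the torus), and $L$ is a free constant of size comparable to $M$. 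First I would argue by contradiction: if the Lipschitz bound fails for $L = M$, then $\sup_{\T^N\times\T^N}\Phi > 0$, and by compactness and periodicity the supremum is attained at some $(\bar x,\bar y)$ with $\bar x\neq\bar y$. Since $\psi$ is bounded the penalization forces $|\bar x - \bar y|$ to be small (controlled by $\osc(\psi)/L$), so we may work locally and use the smooth concave test function $\varphi(x,y) = L\phi(|x-y|)$.

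The next step is to write the viscosity inequalities. The function $x\mapsto \varphi(x,\bar y)$ touches $\psi$ from above at $\bar x$, and $y\mapsto -\varphi(\bar x, y)$ touches $\psi$ from below at $\bar y$; call the corresponding ``gradients'' $p_1 := D_x\varphi(\bar x,\bar y)$ and $p_2 := -D_y\varphi(\bar x,\bar y)$, which by concavity of $\phi$ satisfy $p_1 = p_2 =: q$ with $|q|\sim L$ (this is the point where the concave penalization is used: the cross term makes the two test-gradients coincide). Testing the equation for $\psi$ at $\bar x$ (as a subsolution of the appropriate inequality, splitting the nonlocal term into $\I[B_\delta]$ on the smooth test function and $\I[B_\delta^c]$ on $\psi$ itself) and at $\bar y$, and subtracting, the crucial gain is:
\begin{itemize}
\item[$\bullet$] the $\delta\tilde a\psi$ terms are bounded by $\delta(\osc(\psi))$ plus lower-order contributions;
\item[$\bullet$] the nonlocal contributions from the \emph{smooth} parts $\I[B_\delta](\varphi(\cdot,\bar y),\bar x)$ and $-\I[B_\delta](-\varphi(\bar x,\cdot),\bar y)$ are controlled using the $C^2$ bound on $\phi$ and $\sigma = 1 < 2$;
\item[$\bullet$] the nonlocal contributions from $\psi$ itself on $B_\delta^c$ satisfy, by the maximum property of $\Phi$, the inequality $\I[B_\delta^c](\psi,\bar x) \le \I[B_\delta^c](\psi,\bar y) + (\text{error from }\varphi)$, which is the nonlocal analogue of the standard second-order trick and is handled exactly as in \cite{BLT,BI};
\item[$\bullet$] the Hamiltonian difference, via \eqref{H1} applied with $\mu$ close to $1$ (equivalently the superlinear coercivity of $\tilde H$ combined with \eqref{H20}), produces a favourable term $b_0|q|^m$ on the left, dominating all the error terms once $|q|\sim L \sim M$ is large enough relative to $|l| + |p|^m$ and $\osc(\psi)$.
\end{itemize}
Comparing the negative ``production'' term $b_0|q|^m \gtrsim b_0 L^m$ against the accumulated errors $O(L) + O(|l|) + O(|p|^m) + O(\osc(\psi)) + o_\delta(1)$ yields a contradiction for $C$ (hence $M$, hence $L$) chosen large depending only on the data, after sending $\delta\to 0$.

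The main obstacle I expect is the careful bookkeeping of the nonlocal terms at the doubling point: one must split each $\I$ into a near-diagonal piece evaluated on the smooth penalization $\varphi$ (which is only $C^{1,1}$-type, so the $B_\delta$ integral needs the $|z|^2$ compensation and $\sigma<2$) and a far piece evaluated on $\psi$ itself, and then exploit that $(\bar x,\bar y)$ maximizes $\Phi$ to compare the far pieces $\I[B_\delta^c](\psi,\bar x)$ and $\I[B_\delta^c](\psi,\bar y)$ with an error expressed through $\varphi$. Since $\sigma = 1$ exactly, the compensator $\1_B(z)\langle q,z\rangle$ genuinely appears and the $|q||\log\delta|$-type terms must be absorbed before letting $\delta\to 0$; here one either optimizes in $\delta$ or, as in the quoted references, uses that the combination of sub/supersolution evaluations cancels the singular part. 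Everything else — the choice of concave modulus, the smallness of $|\bar x-\bar y|$, the coercivity estimate on $|q|$ — is routine and parallels the first-order case and \cite{BLT}.
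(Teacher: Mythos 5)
The paper's proof is fundamentally different and your proposal has a genuine gap. The paper does \emph{not} run a direct doubling of variables on $\psi$: it first performs the exponential (Bernstein-type) change of variables $\psi=e^{v}$, rewrites the equation in terms of $v$, and only then doubles with the linear penalization $L|x-y|$. This substitution is not a cosmetic choice — it is precisely what produces the superlinear gain. After the change of variables the gradient tested at $\bar x$ is $e^{v(\bar x)}L\hat a$ while the gradient tested at $\bar y$ is $e^{v(\bar y)}L\hat a$; since $v(\bar x)>v(\bar y)$ these differ by a factor $\mu^{-1}$ with $\mu=e^{v(\bar y)-v(\bar x)}\in(0,1)$, so assumption \eqref{H1} applies with a genuine $\mu<1$ and yields the term $(1-\mu)\,b_0 L^m|\tilde p|^m$, and the bound $1-\mu\geq e^{-\osc(v)}L|x-y|$ lets this gain dominate all the error terms after cancelling the factor $|x-y|$.

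In your proposal this mechanism is absent. With any radially symmetric penalization $L\phi(|x-y|)$ (concave or not), the two test gradients at the maximum point are \emph{equal}, $p_1=p_2=q$. Then the Hamiltonian contribution in the subtracted inequalities is $\tilde H(\bar x,p+q)-\tilde H(\bar y,p+q)$, which is a \emph{bad} error of size $\omega(|\bar x-\bar y|)(1+|q|^m)$ by \eqref{H20}, not a favourable $b_0|q|^m$ term — applying \eqref{H1} with ``$\mu$ close to $1$'' yields nothing because $(1-\mu)\to 0$. The concave modulus $\phi(r)=r-\kappa r^{3/2}$ does not rescue this: in Ishii–Lions arguments concavity is used to extract a negative-definite contribution from the second-order/elliptic part, and for $\sigma=1$ the nonlocal operator is of the \emph{same} order as the gradient, so it cannot dominate a term of size $L^m\omega(|\bar x-\bar y|)$ coming from the Hamiltonian. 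In short: you correctly identify the nonlocal bookkeeping (near/far splitting, using maximality to compare $\I[B_\delta^c]$ at $\bar x$ and $\bar y$), but you are missing the key idea that makes the coercivity actually bite, namely the exponential change of variables that creates the asymmetry $\mu<1$ needed for \eqref{H1}.
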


\noindent
{\bf \textit{Proof:}} We follow closely the lines of Theorem 3.1 in~\cite{BLT}. We start noticing that by comparison principle, $\psi$ satisfies
$$
\delta |\psi|_\infty \leq C (1 + |l| + |\tilde H(\cdot, p)|_\infty) \leq C(1 + |l| + |p|^m),
$$
for some $C > 0$ depending on the data.

Now, replacing $\psi$ by $\psi - \inf_{\T^N} \psi + 1$ we can assume $\psi \geq 1$ at the expense of deal with the modified problem
\begin{equation*}
\delta \tilde a \psi -l + (-\Delta)^{1/2} \psi + \tilde H(y, p + D\psi) = \tilde f \quad \mbox{in} \ \T^N,
\end{equation*}
where $\tilde f(y) = \theta \tilde a(y)$ with $\theta \in \R$ satisfying $|\theta| \leq C(1 + |l| + |p|^m)$.

Then, we introduce the change of variables $\psi = e^{v}$, from which we conclude that $v$ solves the problem
\begin{equation}\label{eqJ}
\delta \tilde a -l e^{-v} - J(v,x) + e^{-v}\tilde H(y, p + e^{v}D\psi) =  \tilde f e^{-v} \quad \mbox{in} \ \T^N,
\end{equation}
where $J$ is a nonlinear nonlocal operator with the form
\begin{equation*}
J(v, x) = \int_{\R^N} [e^{v(x + z) - v(x)} - 1 - \1_B \langle Dv(x), z\rangle] |z|^{-(N + 1)}dz.
\end{equation*}

Then, for $L > 0$ we consider the function
$$
(x,y) \mapsto \Phi(x,y) = v(x) - v(y) - L|x - y|, \quad x , y \in \T^N
$$
which attains its maximum at a point $(\bar x, \bar y)$. We prove that for $L$ large enough this maximum is nonpositive from which the result follows.

By contradiction, we assume $\Phi(\bar x, \bar y) > 0$, from which $\bar x \neq \bar y$. Then we can use $\bar x$ as test point for $v$ (regarded as subsolution  to~\eqref{eqJ}) with test function $x \mapsto L|x - \bar y|$, and $\bar y$ as test point for $v$ (regarded as supersolution  to~\eqref{eqJ}) with test function $y \mapsto -L|\bar x - y|$. Substracting the viscosity inequalities and using the maximality of $(\bar x, \bar y)$ together with the definition of $J$ to control the nonlocal terms, we arrive at
\begin{equation*}
-\delta|\tilde a(\bar x) - \tilde a(\bar y)| - l (e^{-v(\bar x)} - e^{-v(\bar y)}) + \mathcal H \leq e^{-v(\bar x)}\tilde f(\bar x) - e^{-v(\bar y)} \tilde f(\bar y),
\end{equation*}
where 
$$
\mathcal H = e^{-v(\bar x)}\tilde H(\bar x, p + L e^{v(\bar x)}\hat a) - e^{-v(\bar x)}\tilde H(\bar x, p + L e^{v(\bar x)}\hat a),
$$
and $\hat a = (x - y)/|x - y|$.

From now on we denote $\mu = e^{v(y) - v(x)} \in (0,1)$. By the assumptions, the last inequality lead us to
\begin{equation}\label{ciao}
-L_{\tilde a} |\bar x - \bar y| - l^+e^{-v(y)} (1 - \mu) + \mathcal H \leq e^{-v(y)} (|\tilde f| (1 - \mu) + L_{\tilde f}|x - y|),
\end{equation}

From here we focus on $\mathcal H$. Notice that
\begin{equation*}
\mathcal H = e^{-v(y)} \Big{(} \mu \tilde H(x, p + \mu^{-1} L \tilde p) - \tilde H(y, p + L \tilde p)\Big{)}, \quad \tilde p = e^{v(\bar y)}\hat a.
\end{equation*}

In view of the assumption on $\tilde H$ we see that
\begin{align*}
\mathcal H \geq -C e^{-v(y)} \Big{\{}&  -L_H (1 + |p + L \mu^{-1} \tilde p|^m)|x - y| \\
& - L_H (1 + |p + L\tilde p|^{m - 1})|(1 - \mu)|p| \\
& + (1 - \mu)(c|\mu p + L \tilde p|^m - C) \Big{\}}
\end{align*}

If we assume that $L \geq \max \{ 1, 4|p|, l^+, |\tilde f|_\infty, Cc^{-1} \}$ we can write
\begin{align*}
\mathcal H \geq (1 - \mu)cL^m |\tilde p|^m,
\end{align*} 
for some constants $C,c > 0$. Hence,~\eqref{ciao} reduces to
\begin{align*}
-L_{\tilde a} |\bar x - \bar y| + c(1 - \mu)e^{-v(y)}L^m |\tilde p|^m \leq e^{-v(y)} L_{\tilde f}|x - y|.
\end{align*}

At this point, we notice that the maximality of $(x,y)$ we see that $L|x - y| \leq v(x) - v(y)$, which in turn implies that $|x - y| \leq L^{-1}\mathrm{osc}(v)$. Then, considering additionaly $L$ large enough in terms of $\mathrm{osc}(v)$  ($L \geq 2\mathrm{osc}(v)$), by definition of $\mu$ we can conclude that
\begin{align*}
1 - \mu \geq 1 - e^{-L|x - y|} \geq e^{-\osc(v)}L|x - y|.
\end{align*}

Using this and cancelling the common factor $|x - y| > 0$ in the last inequality, and using the definition of $\tilde p$ we arrive at
\begin{align*}
-L_{\tilde a}  + ce^{-osc(v)}e^{(m-1)v(y)}L^{m + 1} \leq e^{-v(y)} L_{\tilde f}.
\end{align*}

Then, since $m > 1$ we get that additionally assuming that 
$$
L \geq C\max \{ c^{-1}e^{osc(v)}, L_a, L_f \}
$$ 
for a large universal constant $C > 1$ we arrive at a contradiction. Finally, recalling the relation $\psi = e^v$ we notice that 
\begin{equation*}
e^{osc (v)} = \frac{e^{\sup v}}{e^{\inf v}} = \frac{\sup \psi - \inf \psi}{e^{\inf v}} + 1 \leq osc(\psi) + 1,
\end{equation*}
from which the dependence on the oscillation is obtained.
\qed

\medskip

Next, we provide a sketch of the proof of~\eqref{cotaHolder}, presented as the following
\begin{lema}\label{cotaHolder+}
Let $c_0, C_0 > 0$, $m > 1$ and $u$ be a bounded, upper semicontinuous viscosity solution to the problem
\begin{equation*}
(-\Delta)^{1/2} u + c_0 |Du|^m \leq C_0 \quad \mbox{in} \ \T^N.
\end{equation*}

Then, for each $\gamma \in (0,1)$, there exists a constant $C_\gamma > 0$ just depending on $\gamma, m, N$ and $c_0$ such that
\begin{equation*}
|u(x) - u(y)| \leq C_\gamma \Big{(} 1 + (\osc(u) + C_0)^{1/m} \Big{)}  |x-y|^\gamma  \quad\forall\, x, y \in \T^N.
\end{equation*}
\end{lema}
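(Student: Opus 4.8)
The plan is to prove the H\"older estimate by the classical viscosity-solution doubling argument adapted to the fractional setting, exploiting the coercive gradient term to absorb the error coming from the nonlocal operator. Fix $\gamma \in (0,1)$ and two points $x_0, y_0 \in \T^N$; it suffices to get the bound on a small scale, since on $|x-y|\geq r_0$ the estimate is trivial from $\osc(u)$. First I would introduce the auxiliary function
\[
\Phi(x,y) = u(x) - u(y) - L\,\phi(x-y), \qquad \phi(z) = |z|^\gamma \ \text{for } |z|\leq r_0,
\]
suitably modified (e.g. made constant and smooth) for $|z|$ bounded away from $0$, and let $L>0$ be a large parameter to be chosen. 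Assuming for contradiction that $u(x_0)-u(y_0) > L|x_0-y_0|^\gamma$ for some pair with small $|x_0-y_0|$, the supremum of $\Phi$ over $\T^N\times\T^N$ is positive and attained at a point $(\bar x,\bar y)$ with $\bar x\neq \bar y$ and $|\bar x-\bar y|\leq (\osc(u)/L)^{1/\gamma}$, hence small once $L$ is large.

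Next I would write the viscosity inequalities: $x\mapsto L\phi(x-\bar y)$ touches $u$ from above at $\bar x$, and $y\mapsto -L\phi(\bar x - y)$ touches $u$ from below at $\bar y$. Denote $p := L D\phi(\bar x - \bar y)$, so $|p|\simeq L|\bar x-\bar y|^{\gamma-1}$, which is large. Subtracting the two inequalities gives
\[
(-\Delta)^{1/2}_{\bar x}\big(L\phi(\cdot-\bar y)\big) - (-\Delta)^{1/2}_{\bar y}\big(-L\phi(\bar x-\cdot)\big) + c_0|p|^m \leq 2C_0 ,
\]
where the nonlocal terms must be split into an inner part near the origin (where the $C^2$ bound on $\phi$ away from $0$, or a careful estimate of the $|z|^\gamma$ singularity, controls the second-order difference) and an outer part, which by the maximality of $(\bar x,\bar y)$ has a favorable sign up to a term controlled by $\osc(u)$. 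The key point is that the combined nonlocal contribution is bounded by $C\big(L|\bar x-\bar y|^{\gamma-1} + \osc(u)\big)$, i.e.\ it is at worst of order $|p|$; meanwhile the coercive term contributes $c_0|p|^m$ with $m>1$. Since $|p|$ is large, $c_0|p|^m$ dominates $C|p|$ and also $C\,\osc(u) + 2C_0$ provided $L \geq C_\gamma(1 + (\osc(u)+C_0)^{1/m})$ for a constant $C_\gamma$ depending only on $\gamma, m, N, c_0$. This yields the desired contradiction and hence $\Phi\leq 0$, i.e.\ the claimed estimate with this $L$.

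The main obstacle I expect is the careful estimate of the nonlocal term evaluated on the singular test function $|z|^\gamma$: one must check that the inner integral $\int_{B_\delta}[\phi(\bar x+z-\bar y)-\phi(\bar x-\bar y)-\langle D\phi,z\rangle]|z|^{-N-1}dz$ is integrable near $z=0$ (it is, since $\gamma<1<2$ controls the blow-up rate relative to $|z|^{-N-1}$, giving a bound of order $|\bar x-\bar y|^{\gamma-1}\delta^{1-?}$ that can be made small) and that the outer integral, after using $\Phi(\bar x+z,\bar y+z)\leq \Phi(\bar x,\bar y)$ and $\Phi(\bar x+z,\bar y)\leq\Phi(\bar x,\bar y)$, produces only an $\osc(u)$-type error plus the gradient-compensator terms. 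This is essentially the computation already carried out in~\cite{BKLT} for Theorem 2.2 and can be cited; the role of the sketch here is just to track how the constant depends on $\osc(u)$ and $C_0$, which is exactly what is needed for the oscillation bound~\eqref{oscbound} in the proof of Proposition~\ref{propcell1}.
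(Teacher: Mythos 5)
Your plan contains a genuine error in the structure of the argument. You propose the classical doubling of variables, writing $\Phi(x,y)=u(x)-u(y)-L\phi(x-y)$ and, at the maximum point $(\bar x,\bar y)$, using the map $y\mapsto -L\phi(\bar x-y)$ as a test function \emph{touching $u$ from below} at $\bar y$, then subtracting the two viscosity inequalities. But the hypothesis only gives $u$ upper semicontinuous satisfying the \emph{one-sided} inequality $(-\Delta)^{1/2}u+c_0|Du|^m\leq C_0$; that is, $u$ is a subsolution only. There is no supersolution inequality to invoke at $\bar y$, so the second viscosity inequality in your scheme is simply not available, and the subtraction step cannot be performed. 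This is not a technical detail: the whole doubling machinery is designed to exploit both inequalities, and for a pure subsolution it collapses.

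The paper avoids this by not doubling variables at all. One fixes an anchor point $x_0\in\T^N$ and shows directly that $u(x)-u(x_0)\leq L|x-x_0|^\gamma$ for all $x$, by testing the subsolution inequality at a maximum point $\bar x\neq x_0$ of $x\mapsto u(x)-u(x_0)-L|x-x_0|^\gamma$ with the single test function $\phi(x)=L|x-x_0|^\gamma$ (regularized near $\bar x$). The outer nonlocal term is bounded by $C\,\osc(u)$ using the maximality, the intermediate and inner pieces yield $CL|\bar x-x_0|^{\gamma-1}$, and the coercive term $c_0\gamma^mL^m|\bar x-x_0|^{m(\gamma-1)}$ dominates for $L$ large because $m>1$, giving the contradiction. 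Since $x_0$ is arbitrary, the one-sided bound implies the two-sided H\"older estimate. Your treatment of the nonlocal term and the role of coercivity are otherwise in the right spirit, but the proof needs to be restructured around a single anchor point rather than a doubled pair.
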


\noindent
{\bf \textit{Proof:}} Fix $\gamma \in (0,1)$ and let $x_0 \in \T^N$. 
We look for a constant $L > 0$ large enough, not depending on $x_0$, such that
\begin{equation*}
u(x) - u(x_0) \leq L |x - x_0|^{ \gamma} \quad \mbox{for all} \ x \in \T^N.
\end{equation*}
We proceed by contradiction. Then, for every $L>0$ there exists $\theta_L>0$ and  $\bar x \in \T^N, \bar x \neq x_0$ such that
\begin{equation*}
u(\bar x) - u(x_0) - L|\bar x - x_0|^\gamma = \max_{x \in \T^N} \{ u(x) - u(x_0) - L|x - x_0|^\gamma \} \geq \theta_L.
\end{equation*}
 %Notice that the above inequality implies that
%\begin{equation*}
%L |\bar x - x_0|^\gamma \leq \osc(u),
%\end{equation*}
%and therefore, we assume that $L \geq \osc(u)$ in order that $|\bar x - x_0| \leq 1$.
Now, we observe that we can use the function $\phi(x) =  L|x - x_0|^\gamma$ as test function for $u$ at $\bar x$. 
Actually we fix $\delta_0<|\bar x-x_0|$ 
and we consider a smooth function  $\phi_0$ 
which coincides with $\phi$ in
$B(\bar x,\delta_0)$. So $u-\phi_0$ has a maximum at $\bar x$  in $B_{\delta_0}(\bar x)$ 
and  recalling Definition \ref{defvisco}, we get that 
for any $0<\delta<\min (1, \delta_0)$, there holds 
\begin{equation}\label{last}
-\I[B_\delta](\phi, \bar x) - \I[B_\delta^c](u,  \bar x) + c_0 \gamma^m L^m |\bar x - x_0|^{m(\gamma - 1)} \leq C_0,
\end{equation}
where %we have replaced 
$-\I = (-\Delta)^{1/2}$, and $\I[B_\delta](\phi, \bar x)$ and $\I[B_\delta^c](u,\bar x)$ have been defined in \eqref{operatordelta} and \eqref{operatordelta2sym}.

Using the fact that $\bar x$ is a maximum point to $u - \phi$ we can write
\begin{equation*}
\I[B_\delta^c](u, \bar x) \leq  \I[B \setminus B_\delta](\phi, \bar x) + \I[B^c](u, \bar x).\end{equation*}
Now it is easy to check that $ \I[B^c](u, \bar x)\leq  C \ \osc(u)$ for some universal constant $C>0$. 
Moreover, recalling the definition of $(-\Delta)^{1/2}$ in \eqref{fractionallap}, of $\phi$ and $\delta$, we get that for any $\delta\in (0, \min(1, \delta_0))$, 
\begin{multline*}
\I[B \setminus B_\delta](\phi, \bar x)=L C_{N, 1}  \int_{\delta<|z|<1} [|\bar x +z- x_0|^\gamma-|\bar x - x_0|^\gamma] |z|^{-N-1}dz\\ \leq 
L C_{N, 1} \int_{\delta<|z|<1} |z|^\gamma |z|^{-N-1}dz\leq CL(\delta^{\gamma-1}-1)\leq CL(|\bar x-x_0|^{\gamma-1}-1)\
\end{multline*} 
for some  constant $C>0$, depending on $N$ and $\gamma$. 
On the other hand, if we fix  $\delta= |\bar x - x_0|/2$,  we observe that there exists a constant $C_0>0$ depending only  on $\gamma$ and $N$ such that 
for every $z\in B_\delta$ we get 
\begin{equation*}
\phi(\bar x + z) - \phi(\bar x) - \langle D\phi(\bar x), z \rangle = \frac{1}{2} \int_{0}^{1} (1 - t)\langle D^2\phi(\bar x + tz) z, z \rangle dt\leq C_0L|\bar x-x_0|^{\gamma-2}|z|^2. 
\end{equation*} 
Therefore, we conclude  that 
\begin{equation*}
\I[B_\delta](\phi, \bar x) \leq C_{N, 1} C_0 L  |\bar x - x_0|^{\gamma - 2} \int_{B_\delta} |z|^2 |z|^{-(N + 1)}dz \leq C L |\bar x - x_0|^{\gamma - 1}.
\end{equation*} for some constant $C>0$ depending only on $N, \gamma$. 

Joining the above estimates into~\eqref{last} we can write
\begin{equation*}
c_0\gamma^m L^m |\bar x - x_0|^{m(\gamma - 1)} \Big{(} 1- \frac{C}{c_0} \gamma^{-m}(L |\bar x - x_0|^{\gamma - 1})^{1 - m}\Big{)} \leq  C \ \osc(u) + C_0,
\end{equation*}
and since we can assume $|\bar x - x_0| \leq \sqrt{N}$ together with the fact that $m > 1$, we arrive at
\begin{equation*}
c_0\gamma^m L^m |\bar x - x_0|^{m(\gamma - 1)} \Big{(} 1 - \frac{C}{c_0}\gamma^{-m} (L N^{(\gamma - 1)/2})^{1 - m}\Big{)} \leq  C \ \osc(u) + C_0.
\end{equation*}

Thus, taking $L$ large enough in terms of $c_0, N, \gamma$ and $m$ we arrive at 
$$
c_0 \gamma^{m} L^m |\bar x - x_0|^{m(\gamma - 1)} \leq 2(C\ \osc(u) + C_0),
$$ 
from which we arrive at a contradiction by taking $L$ sufficiently large in terms of $C_0/c_0$.
\qed

\medskip

We finish with the proof of the following

\medskip

{  
\noindent
{\bf \textit{Claim:}} \textsl{Conditions {  \eqref{H0},} \eqref{H1} and \eqref{H20} imply~\eqref{strongcond}.}

\medskip

By uniform continuity of $H$, see assumption \eqref{H20}, %it is possible to
{  from   \eqref{H0} we can get }%conclude
~\eqref{strongcond} in the case of $p$ bounded,  by taking $K > 0$ large enough. So, we take $K>0$ large enough such that \eqref{strongcond} holds for all $|p|\leq 2$.  Thus, from here we concentrate on the case of  $|p|>2$.

Consider $q \in \R^N, \ q \neq 0$. Now, for $\mu \in (0,1)$ and  $k\in \N$,  applying \eqref{H1} with $p=\mu^{-(k-1)}q$, we have
\begin{equation*}
\mu H(x,y,\mu^{-k} q) - H(x,y, \mu^{-(k - 1)} q) \geq (1 - \mu) \Big{(}  b_0 \mu^{-m(k - 1)}|q|^m - C_0 \Big{)}.
\end{equation*}
We multiply the above inequality by $\mu^{k-1}$ and sum it up from $k=1$ to $n$ for some $n \in \N$,  and we conclude that 
\begin{multline}
\label{uno}
\mu^n H(x,y,\mu^{-n}q) - H(x,y, q)= \sum_{k=1}^n \left[\mu^k H(x,y,\mu^{-k} q) - \mu^{k-1} H(x,y, \mu^{-(k - 1)} q)\right]\\
 \geq \sum_{k=1}^n\left[(1 - \mu) \left( b_0 \mu^{(1-m)(k - 1)}|q|^m - C_0\mu^{k-1}\right)\right]
 =  (1 - \mu) b_0 |q|^m \frac{ \mu^{n(1 - m)}-1}{\mu^{1 - m}-1} - C_0 (1 - \mu^{n}).
  \end{multline}
Let fix  $|p| > 2$ and let $n\in \N$ such that $2^{n}\leq |p|\leq 2^{n+1}$.  Let $\mu=|p|^{-\frac{1}{n}}<1$. 
 Note that  by our choice of $n$, $\mu\in \left[\frac{1}{4}, \frac{1}{2}\right]$. 
Then, from \eqref{uno},  applied to $q=\frac{p}{|p|}$ and to $\mu$ and $n$ as above, so that $p=\mu^{-n}q$, we get that
\begin{equation}\label{due} 
|p|^{-1} H(x,y, p) -  H(x,y, q) \geq (1 - \mu) b_0 \frac{|p|^{m-1}-1}{\mu^{1-m}-1} 
- C_0 \left(1-|p|^{-1}\right).
\end{equation}
Observe that $\frac{1}{2}\leq 1-\mu\leq \frac{3}{4}$ and 
$$
\frac{1}{\frac{1}{4}^{1-m}-1}\leq \frac{1}{\mu^{1-m}-1}\leq \frac{1}{\frac{1}{2}^{1-m}-1}.
$$ 
So there exist  constants $c_m, C_m>0$ depending only on $m>1$ such that 
 \[c_m\leq (1 - \mu) \frac{1}{\mu^{1-m}-1}\leq C_m.\]  
Therefore from \eqref{due} we get 
\[
 H(x,y,p)\geq b_0c_m |p|^m- b_0C_m|p| +|p|H(x,y,q)-C_0(|p|-1).
\]
By \eqref{grow} we have that $H(x,y,q)\geq -C$ for all $q\in\R^N$ with $|q|=1$, so,
we conclude that 
\[ H(x,y,p)\geq b_0c_m |p|^m-|p| (b_0C_m+C+C_0)+C_0\qquad \forall |p|>2.\] Therefore, recalling that $m>1$, we conclude that there exist $\tilde C>0$, and $K>0$, depending on $m, b_0, C_m, c_m, C, C_0$ 
such that \[ H(x,y,p)\geq \tilde C (|p|^m+1)-K\qquad \forall |p|>2.\] 
\qed
}

\bigskip

\noindent
{\textbf{Aknowledgements:}  E.T. was partially supported by Conicyt PIA Grant No. 79150056, Foncecyt Iniciaci\'on No 11160817 and by the Visiting Professor grant of the Department of Mathematics ``Tullio Levi-Civita'' of the University of Padova. 
M. B. and A. C.  were partially supported by the research projects  ``Mean-Field Games and Nonlinear PDEs" of the University of Padova {  and ``Nonlinear Partial Differential Equations: Asymptotic Problems and Mean-Field Games" of the Fondazione CaRiPaRo}. M. B. and A. C. are members of the Gruppo Nazionale per l'Analisi Matematica, la Probabilit\`a e le loro Applicazioni (GNAMPA) of the Istituto Nazionale di Alta Matematica (INdAM).

%%%%%%%%%%%%%%%%%%%%%%%%%%%%%%%%%%%%BIBLIOGRAPHY%%%%%%%%%%%%%%%%%%%%%%%%%%%%%%%%%%%%%%%%%%%%%%%

\end{document}